\newtheorem{thm}{Theorem}[section]
\newtheorem*{thm*}{Theorem}
\newtheorem{prop}[thm]{Proposition}
\newtheorem{cor}[thm]{Corollary}
\newtheorem{lem}[thm]{Lemma}
\theoremstyle{definition}
\newtheorem{dfn}[thm]{Definition}
\newtheorem*{dfn*}{Definition}
\newtheorem{ex}[thm]{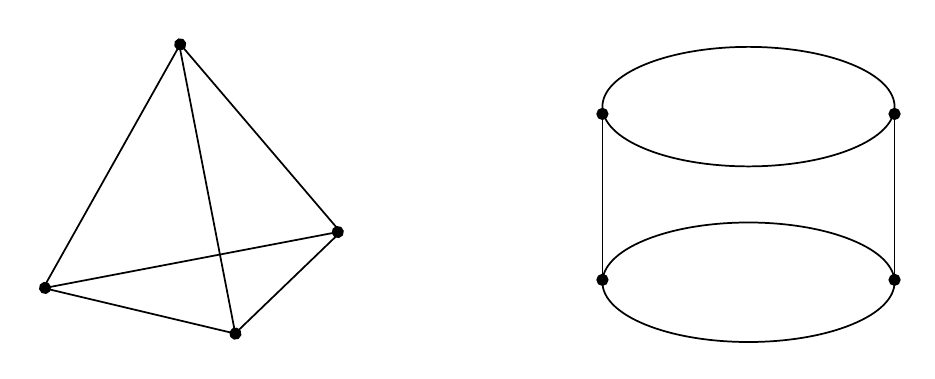}
\newtheorem{exs}[thm]{Examples}
\theoremstyle{remark}
\newtheorem{rem}[thm]{Remark}
\numberwithin{equation}{section}
\newcommand{\RR}{\mathbb R}
\newcommand{\ZZ}{\mathbb Z}
\newcommand{\CC}{\mathbb C}
\newcommand{\im}{\mathrm{im}\,}
\newcommand{\supp}{\operatorname{supp}}
\newcommand{\mfk}{\mathfrak{k}}
\newcommand{\mfh}{\mathfrak{h}}
\newcommand{\mft}{\mathfrak{t}}
\newcommand{\mfp}{\mathfrak{p}}
\newcommand{\mfm}{\mathfrak{m}}
\newcommand{\Hb}{H_{bas}}
\newcommand{\Mr}{M_{reg}}
\newcommand{\Ms}{M_{sing}}
\newcommand{\depth}{\operatorname{depth}}
\newcommand{\grade}{\operatorname{grade}}
\newcommand{\Ann}{\mathrm{Ann}}
\newcommand{\Ass}{\mathrm{Ass}}
\begin{document}

\title{Torus actions whose equivariant cohomology is Cohen-Macaulay}
\author{Oliver Goertsches}
\address{Oliver Goertsches, Mathematisches Institut, Universit\"at zu K\"oln, Weyertal 86-90, 50931 K\"oln, Germany}
\email{ogoertsc@math.uni-koeln.de}
\author{Dirk T\"oben}
\address{Dirk T\"oben, Mathematisches Institut, Universit\"at zu K\"oln, Weyertal 86-90, 50931 K\"oln, Germany}
\email{dtoeben@math.uni-koeln.de}
\subjclass{Primary 55N25, Secondary 57S15, 57R91}
\thanks{DT was supported by the Schwerpunktprogramm SPP 1154 of the DFG}
\begin{abstract} We study Cohen-Macaulay actions, a class of torus actions on manifolds, possibly without fixed points, which generalizes and has analogous properties as equivariantly formal actions. Their equivariant cohomology algebras are computable in the sense that a Chang-Skjelbred Lemma, and its stronger version, the exactness of an Atiyah-Bredon sequence, hold. The main difference is that the fixed point set is replaced by the union of lowest dimensional orbits. We find sufficient conditions for the Cohen-Macaulay property such as the existence of an invariant Morse-Bott function whose critical set is the union of lowest dimensional orbits, or open-face-acyclicity of the orbit space. Specializing to the case of torus manifolds, i.e., $2r$-dimensional orientable compact manifolds acted on by $r$-dimensional tori, the latter is similar to a result of Masuda and Panov, and the converse of the result of Bredon that equivariantly formal torus manifolds are open-face-acyclic. 
\end{abstract}
\maketitle
\tableofcontents

\section{Introduction}

In the theory of equivariant cohomology, the class of equivariantly formal actions of a real torus $T$ on a compact manifold $M$ is certainly one of the most intensely studied. On the one hand, it comprises many important examples, such as Hamiltonian torus actions on compact symplectic manifolds, and on the other hand such actions have many beautiful properties, e.g.~the equivariant cohomology $H^*_T(M)$ is determined by the 1-skeleton of the action as proven by Chang and Skjelbred \cite[Lemma 2.3]{ChangSkjelbred}, and thereby explicitly computable \cite[Theorem 1.2.2]{GKM} via what is nowadays called GKM theory, see e.g.~\cite{GuilleminZara}. To our knowledge, the only known big classes of actions on manifolds for which the ($S(\mft^*)$-algebra structure of the) equivariant cohomology is explicitly computable are either equivariantly formal or have only one isotropy type.

A geometric property of equivariantly formal actions is that their minimal strata consist of fixed points. 
The fixed point set of an action plays an important role in the whole theory, as shown by e.g.~the famous localization theorems. A basic example of its relevance is that it encodes the rank of $H^*_T(M)$ as a module over $S(\mft^*)$. 

The motivating question for our work was the following: {\it Is there a suitable generalization of equivariant formality that also covers actions without fixed points? }

From the point of view of computability of $H^*_T(M)$, the answer to this question is implicit in the proof of the exactness of the so-called Atiyah-Bredon sequence \cite{Bredon} (Theorem \ref{thm:atiyahbredonsequence}, see also \cite{FranzPuppe2003}), which can be regarded as an extension of the Chang-Skjelbred Lemma; the relevant property of $H^*_T(M)$ for the proof is not that it is a free module, but that it is a Cohen-Macaulay module of Krull dimension $\dim T$.

In Section \ref{sec:CMactions} we give the
\begin{dfn*} The $T$-action is {\it Cohen-Macaulay} if $H^*_T(M)$ is a Cohen-Macaulay module over $S(\mft^*)$.
\end{dfn*}
The Cohen-Macaulay property was used as a tool in various papers on equivariant cohomology. The purpose of this work is to justify why it is the appropriate notion for answering the above question. The central role of the fixed point set will be seen to be assumed by the union of lowest dimensional orbits. 

Let $b$ be the smallest occuring orbit dimension. Similarly to the equivariantly formal case \cite{Bredon, FranzPuppe}, there is an Atiyah-Bredon sequence, but with the fixed point set replaced by the union $M_b$ of $b$-dimensional orbits, whose exactness is equivalent to Cohen-Macaulayness (Theorem \ref{thm:genABsequence}):
\begin{thm*} The $T$-action on $M$ is Cohen-Macaulay if and only if the sequence
\[
0\to H^*_T(M)\to H^*_T(M_b)\to H^*_T(M_{b+1},M_b)\to \ldots \to H^*_T(M,\Ms)\to 0
\]
is exact.
\end{thm*}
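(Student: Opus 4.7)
The plan is to adapt the strategy of Franz and Puppe \cite{FranzPuppe2003} from the equivariantly formal case to this more general setting; the only genuine change is that the filtration begins at $M_b$ instead of at the fixed-point set. Concretely, one filters $M$ by orbit dimension, $M_b \subseteq M_{b+1} \subseteq \cdots \subseteq \Ms \subseteq M$, with $M_i$ the union of orbits of dimension at most $i$, and the displayed sequence arises by splicing the long exact sequences of the pairs $(M_i,M_{i-1})$ via their connecting homomorphisms (the initial map $H^*_T(M) \to H^*_T(M_b)$ being simply restriction). A standard double-connecting-map argument shows the composites vanish, so this is indeed a complex; equivalently, it arises naturally as the $E_1$-page of the spectral sequence of the filtration.

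Next I would identify the $S(\mft^*)$-support of each term. Via the localization theorem applied to $T$-equivariant tubular neighborhoods of the open strata, the module $H^*_T(M_{b+i},M_{b+i-1})$ is supported on a subscheme of $\operatorname{Spec} S(\mft^*)$ of codimension $b+i$, namely the union of the isotropy Lie subalgebras of the $(b+i)$-dimensional orbits. With this support information in hand, the Atiyah-Bredon complex is recognised as a Cousin-type complex whose cohomology at the $i$-th spot is naturally identified with a local cohomology module of $H^*_T(M)$ with respect to the irrelevant ideal $S(\mft^*)_+$ (up to a shift of $b$).

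Finally, Grothendieck's criterion closes the argument in both directions: for a finitely generated graded module $N$ over $S(\mft^*)$ of Krull dimension $d$, one has $H^j_{S(\mft^*)_+}(N)=0$ for all $j<d$ precisely when $N$ is Cohen-Macaulay. Since the Krull dimension of $H^*_T(M)$ equals $\dim T - b$ (the smallest isotropy having dimension $\dim T - b$), this vanishing is equivalent to exactness of the displayed sequence, which has length exactly $\dim T - b$. The main technical obstacle I foresee is the Cousin/local-cohomology identification in the middle step: in the absence of equivariant formality one has to verify that the orbit-dimension filtration on $M$ refines to a filtration of $H^*_T(M)$ by support in $\operatorname{Spec} S(\mft^*)$ that is compatible with the Cousin construction, which requires careful tracking of the closure order of isotropy types and a stratum-by-stratum application of the localization theorem.
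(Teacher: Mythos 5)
Your overall setup (the filtration by orbit dimension starting at $M_b$, the complex arising as the $E_1$-page of the associated spectral sequence, and the computation of the supports of the individual terms) is fine and matches the paper's framework. The genuine gap is the middle step, which you yourself flag as ``the main technical obstacle'': the identification of the cohomology of the Atiyah--Bredon complex with the local cohomology modules $H^j_{S(\mft^*)_+}(H^*_T(M))$. This is not a routine Cousin-complex recognition. Knowing that the term $H^*_T(M_{b+i},M_{b+i-1})$ is supported in codimension $b+i$ --- indeed, that it is Cohen--Macaulay of Krull dimension $r-b-i$, cf.~Example \ref{ex:MiMi-1CM} --- does not by itself exhibit the complex as a Cousin complex for $H^*_T(M)$. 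The actual identification of the $i$-th cohomology of the Atiyah--Bredon complex with $\Ext^i_{S(\mft^*)}(H^*_{T,c}(M),S(\mft^*))$ (equivalently, by local duality, with a local cohomology module) is a substantial theorem in its own right, requiring equivariant Poincar\'e--Lefschetz duality, and it is neither proved nor reduced to a checkable statement in your sketch. A stratum-by-stratum application of the localization theorem yields the supports and the dimension bound $\dim H^*_T(M,M_j)\le r-j-1$, but not the Cousin identification; as written, your argument assumes precisely the point that carries all the difficulty.

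The paper avoids this entirely with a more elementary induction: it uses the equivalence of exactness of the full sequence with exactness of the short sequences $0\to H^*_T(M,M_{j-1})\to H^*_T(M_j,M_{j-1})\to H^*_T(M,M_j)\to 0$, the localization bound $\dim H^*_T(M,M_j)\le r-j-1$, the fact that a nonzero submodule of a Cohen--Macaulay module has full Krull dimension (Lemma \ref{lem:submodulesCM}), and depth bookkeeping in short exact sequences (Lemma \ref{lem:depthses}), together with the observation that each $H^*_T(M_j,M_{j-1})$ is Cohen--Macaulay of dimension $r-j$. To repair your route you would need either to prove the $\Ext$/local-cohomology identification in full (a considerable undertaking) or to replace that step by the direct dimension-and-depth induction just described.
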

The notion of Cohen-Macaulay action encompasses both equivariantly formal actions and actions with only one isotropy type, including locally free actions. Namely, if the action is equivariantly  formal, i.e., $H^*_T(M)$ is a free module, $H^*_T(M)$ is Cohen-Macaulay of 
maximal Krull dimension $\dim T$, and if the action is locally free it is Cohen-Macaulay of Krull dimension $0$. Effective cohomogeneity-one actions are examples of Cohen-Macaulay actions of Krull dimension $1$, see Example \ref{ex:CM}.
To show that there are many more interesting examples of Cohen-Macaulay actions, we relax well-known sufficient conditions for equivariant formality in a way that we retain the Cohen-Macaulay property: (Theorem \ref{thm:MorseBott})
\begin{thm*} If the action admits a $T$-invariant Morse-Bott function with critical set $M_b$, then the action is Cohen-Macaulay.
\end{thm*}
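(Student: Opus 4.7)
The strategy is to adapt Atiyah--Bott's equivariant Morse-theoretic proof from the equivariantly formal case. Let $c_1<\cdots<c_k$ denote the critical values of $f$ and filter $M$ by the $T$-invariant sublevel sets $M^c:=\{f\leq c\}$. Since $f$ is Morse--Bott with critical set $M_b$, at each critical level $c_i$ the critical set is a disjoint union of smooth $T$-invariant submanifolds $C\subset M_b$, and equivariant Morse--Bott theory supplies the Thom isomorphism
\[
H^*_T(M^{c_i}, M^{c_i-\epsilon})\cong \bigoplus_{C}H^{*-\lambda(C)}_T(C)
\]
over $\QQ$ (possibly twisted by the orientation sheaf of the negative normal bundle $\nu^-_C$, a subtlety which does not affect the Cohen--Macaulay analysis).

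The plan is to prove by induction on $i$ that each $H^*_T(M^{c_i})$ is Cohen--Macaulay over $S(\mft^*)$ of Krull dimension $\dim T-b$, delivering the theorem for $H^*_T(M)$ at the end of the filtration. For both the base case and the building blocks of the induction, one must understand $H^*_T(C)$ for a critical component $C\subset M_b$: since all $T$-orbits on $C$ have dimension $b$, the identity components of all isotropies on $C$ coincide with a single subtorus $H^0\subset T$ of dimension $\dim T-b$, and $T/H^0$ acts almost freely on $C$. Over $\QQ$ this gives $H^*_T(C)\cong S(\mfh^*)\otimes H^*(C/T)$ with the $S(\mft^*)$-action factoring through the surjection $S(\mft^*)\twoheadrightarrow S(\mfh^*)$; as a free $S(\mfh^*)$-module, $H^*_T(C)$ is Cohen--Macaulay of Krull dimension $\dim T-b$.

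The inductive step requires equivariant perfectness: the long exact sequence of the pair $(M^{c_i},M^{c_i-\epsilon})$ must split into short exact sequences
\[
0\to H^{*-\lambda(C)}_T(C)\to H^*_T(M^{c_i})\to H^*_T(M^{c_i-\epsilon})\to 0.
\]
Following Atiyah--Bott, this reduces to showing that the equivariant Euler class $e^T(\nu^-_C)\in H^*_T(C)$ is a non-zero-divisor. The key geometric input is that all $H^0$-weights on $\nu^-_C$ are non-trivial: by the slice theorem at $p\in C$, the $H^0$-fixed vectors in the slice representation parametrize the nearby points of $M_b$, which locally coincide with $C$, so the true normal bundle $\nu_C$ contains no trivial $H^0$-weights. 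Consequently the image of $e^T(\nu^-_C)$ under the quotient $H^*_T(C)\to H^*_T(C)/\mathrm{nil}=S(\mfh^*)$ is the product of non-zero elements of $\mfh^*$. After localizing at the zero ideal of $S(\mfh^*)$, the class $e^T(\nu^-_C)$ becomes a unit plus a nilpotent in the finite-dimensional $\mathrm{Frac}(S(\mfh^*))$-algebra $\mathrm{Frac}(S(\mfh^*))\otimes H^*(C/T)$, hence a unit; together with $S(\mfh^*)$-torsion-freeness of $H^*_T(C)$, this yields the non-zero-divisor property.

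Given equivariant perfectness, the inductive step is completed by the standard depth lemma: in a short exact sequence $0\to A\to B\to C'\to 0$ of finitely generated $S(\mft^*)$-modules with $A$ and $C'$ Cohen--Macaulay of common Krull dimension $d$, one has $\dim B=d$ and $\depth B\geq\min(\depth A,\depth C')=d$, so $B$ is Cohen--Macaulay. Iterating along the Morse--Bott filtration concludes the proof. The main obstacle in executing the plan is the non-zero-divisor claim for $e^T(\nu^-_C)$: the localization argument requires careful handling of the module structure of $H^*_T(C)$ over $S(\mft^*)$, including the effects of possibly disconnected isotropies and of non-trivial topology of the orbit space $C/T$.
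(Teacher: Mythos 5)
Your proof follows essentially the same route as the paper's: filtration by $T$-invariant sublevel sets, the equivariant Thom isomorphism for the negative normal bundles of the critical components, injectivity of multiplication by the equivariant Euler class (which the paper obtains by citing \cite[Proposition 4]{Duflot}, using exactly the observation you make that the negative normal bundle has no $\mft_p$-fixed vectors, so your localization argument is a correct self-contained substitute), and the resulting short exact sequences combined with Lemma \ref{lem:CMseq} to propagate Cohen--Macaulayness of Krull dimension $r-b$ up the filtration. One small inaccuracy that does not affect the conclusion: the $S(\mft^*)$-module structure on $H^*_T(C)\cong S(\mfh^*)\otimes H^*(C/T)$ does not in general factor through $S(\mft^*)\twoheadrightarrow S(\mfh^*)$, since a complementary torus acts on $H^*(C/T)$ through a generally nontrivial characteristic homomorphism; freeness over the subring $S(\mfh^*)$ together with the support computation still gives depth and Krull dimension equal to $r-b$.
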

and (Theorem \ref{thm:diskbundle})
\begin{thm*} If the $T$-manifold $M$ admits a $T$-invariant disk bundle decomposition satisfying the properties of Theorem \ref{thm:diskbundle}, then the action is Cohen-Macaulay.
\end{thm*}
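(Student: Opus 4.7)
The plan is to induct on the filtration of $M$ supplied by the disk bundle decomposition, say $M_b = N_0 \subset N_1 \subset \cdots \subset N_k = M$, in which each $N_i$ is obtained from $N_{i-1}$ by gluing in a $T$-equivariant disk bundle $D_i$ of some rank $d_i$ over a union $C_i$ of connected components of $M_b$, along its sphere bundle $S_i \subset \partial N_{i-1}$. Writing $r = \dim T$, I aim to show inductively that $H^*_T(N_i)$ is Cohen-Macaulay over $S(\mft^*)$ of Krull dimension $r-b$.

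For the base case I would verify directly that $H^*_T(M_b)$ is Cohen-Macaulay of Krull dimension $r-b$. Since $b$ is the minimum orbit dimension, every connected component of $M_b$ has a single orbit type: the identity component of its isotropy is a subtorus $H \subset T$ of rank $r-b$ acting trivially. A standard computation then yields $H^*_T(\text{component}) \cong S(\mfh^*) \otimes H^*(\text{component}/T)$, a free $S(\mfh^*)$-module, which viewed through the restriction map $S(\mft^*) \twoheadrightarrow S(\mfh^*)$ is Cohen-Macaulay of the required Krull dimension.

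For the inductive step I would use the long exact sequence of the pair $(N_i, N_{i-1})$, together with excision and the equivariant Thom isomorphism, to identify
\[
H^*_T(N_i, N_{i-1}) \cong H^{*-d_i}_T(C_i).
\]
Since $C_i \subset M_b$, the base case shows that this relative term is Cohen-Macaulay of Krull dimension $r-b$. The content of the hypotheses in Theorem \ref{thm:diskbundle} should then be exploited to break the long exact sequence into short exact pieces
\[
0 \to H^*_T(N_i, N_{i-1}) \to H^*_T(N_i) \to H^*_T(N_{i-1}) \to 0,
\]
typically by ensuring that the equivariant Euler class of $D_i \to C_i$ is a non-zero-divisor in $H^*_T(C_i)$, so the Gysin-type connecting homomorphism is injective. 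Combined with the inductive hypothesis, the standard depth lemma for short exact sequences of finitely generated graded modules forces $H^*_T(N_i)$ to be Cohen-Macaulay of Krull dimension $r-b$. Taking $i = k$ yields the theorem.

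The principal obstacle is the splitting of the long exact sequence into short exact ones; this is exactly where the assumptions of Theorem \ref{thm:diskbundle} must do their work, and verifying the non-zero-divisor property of the equivariant Euler classes of the $D_i$ (or an equivalent injectivity statement for multiplication by the Thom class) is the decisive technical point. Once it is secured, the transfer of the Cohen-Macaulay property along the filtration reduces to pure commutative algebra.
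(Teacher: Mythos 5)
Your overall skeleton---induct over the filtration given by the decomposition, identify the relative term via excision and the equivariant Thom isomorphism, and transfer the Cohen-Macaulay property through short exact sequences using the depth lemma---is the same as the paper's. But there is a genuine gap at exactly the point you yourself flag as decisive: the splitting of the long exact sequence of $(N_i,N_{i-1})$ into short exact pieces. You propose to get it from the equivariant Euler class of $D_i$ being a non-zero-divisor. Nothing in the hypotheses of Theorem \ref{thm:diskbundle} guarantees this: the cell bundles are not negative bundles of a Morse--Bott function, the attaching maps are not required to land in lower skeleta, and the fibre representation may contain vectors fixed by the isotropy of the base (e.g.\ a trivial even-dimensional disk bundle), in which case the Euler class can be zero or a zero-divisor, so the argument via \cite[Proposition 4]{Duflot} is unavailable and multiplication by the Euler class need not be injective. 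The mechanism the paper actually uses is a parity argument, and it is precisely where the hypothesis $\Hb^{odd}(Y)=0$---which your proposal never invokes---enters: since the cell bundles are even-dimensional, excision and the Thom isomorphism give $H^*_T(M^{i+1},M^i)\cong \Hb^{*-2d}(Y)\otimes S(\mft_Y^*)$, which vanishes in odd degrees; carrying $H^{odd}_T(M^i)=0$ along in the induction, the connecting homomorphism vanishes for degree reasons, the short exact sequences drop out, and Lemma \ref{lem:CMseq} finishes the step.

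A secondary inaccuracy: you take the bases of the cell bundles to be unions of connected components of $M_b$ and start the filtration at $N_0=M_b$. The theorem only assumes each base $Y$ is a compact $T$-manifold with a single local isotropy type of dimension $k$ and $\Hb^{odd}(Y)=0$; the filtration begins with the union of the zero-dimensional cell bundles, i.e.\ with the $Y$'s themselves, which need not be components of $M_b$. Your base-case computation $H^*_T(Y)\cong \Hb^*(Y)\otimes S(\mft_Y^*)$ and the conclusion that this is Cohen-Macaulay of Krull dimension $k$ go through verbatim for such $Y$, so this part is easily repaired; the real missing idea is the degree-parity vanishing above.
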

In every case, the philosophy is to replace the fixed point set by the union of lowest dimensional orbits. 

The second half of the paper is dedicated to the question whether it is possible to give conditions on the orbit space of the action that imply that the action is Cohen-Macaulay.  In Section \ref{sec:bottomstratum}, we prove an algebraic characterization of injectivity of the restriction map $H^*_T(M)\to H^*_T(B)$, where $B$ is the bottom stratum of the action, i.e., the union of the minimal strata. This yields as a corollary the geometric statement (Theorem \ref{thm:OFA+injective})
\begin{thm*} If the orbit space of the $T$-action is almost open-face-acyclic, then $H^*_T(M)\to H^*_T(B)$ is injective.
\end{thm*}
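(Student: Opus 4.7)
The plan is to derive this from the algebraic characterization of injectivity of $H^*_T(M)\to H^*_T(B)$ that is proven earlier in Section \ref{sec:bottomstratum}. That characterization should phrase injectivity as vanishing of a specific portion of an Atiyah-Bredon-style complex built out of the orbit filtration, and the role of the geometric hypothesis is to arrange exactly that vanishing.

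First I would set up the orbit-dimension filtration $B=M_b\subset M_{b+1}\subset\cdots\subset M$. Via the long exact sequence of the pair $(M,B)$, injectivity of $H^*_T(M)\to H^*_T(B)$ is equivalent to surjectivity of $H^{*-1}_T(B)\to H^*_T(M,B)$, and $H^*_T(M,B)$ itself is assembled by a spectral sequence whose $E_1$-page consists of the relative terms $H^*_T(M_i,M_{i-1})$ for $i>b$; the goal is therefore to show, in the precise sense dictated by the algebraic criterion, that these terms carry no cohomology obstructing the restriction map.

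The second step is to reinterpret each relative group on the orbit space. By the slice theorem, each connected component of the stratum $M_i\setminus M_{i-1}$ is a $T$-equivariant normal bundle over a submanifold whose $T$-quotient is an open face of $M/T$ of the corresponding orbit type, and the equivariant Thom isomorphism replaces the relative $H^*_T$ by a shifted $H^*_T$ of that component. The $T/K$-fibration over the open face further decomposes the equivariant cohomology as open-face cohomology tensored with $H^*(BK)$. The (almost) open-face-acyclicity hypothesis now kills the open-face factor except possibly on the bottom stratum itself, and this exception is precisely the one the algebraic criterion is designed to tolerate.

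The final step is to feed this vanishing on the $E_1$-page back into the algebraic criterion from Section \ref{sec:bottomstratum} to conclude injectivity. I expect the most delicate point to be the second step: one has to match connected components of strata in $M$ with open faces of $M/T$ correctly, handle possible twistings of the $T/K$-bundles, and verify that the single face that the "almost" clause is allowed to leave non-acyclic is exactly the face that the algebraic criterion accommodates, rather than one that would spoil the spectral-sequence argument elsewhere.
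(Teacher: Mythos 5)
Your proposal does not follow the paper's route, and as stated it has a genuine gap. The algebraic characterization actually proven in Section \ref{sec:bottomstratum} is Proposition \ref{prop:bottominjective}: $H^*_T(M)\to H^*_T(B)$ is injective if and only if for every $p\notin B$ the module $H^*_T(M^p)$ contains no \emph{invisible} element, i.e.\ no class whose support misses every complexified nonregular isotropy algebra. This is a support-theoretic criterion, not a statement about vanishing of a portion of an Atiyah--Bredon-type complex, and the paper's proof of the theorem is simply: almost open-face-acyclicity gives $\Hb^*(\Mr^p)=\RR$ for $p\notin B$, and Corollary \ref{cor:nonexistencesupp0} (whose engine is Duflot's injectivity of the push-forwards in the Gysin sequences \eqref{eq:GysinMi}, plus Poincar\'e duality for $\Mr^p/T$) shows that such an $M^p$ admits no invisible elements. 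You have substituted a different, unproven characterization for the one the section actually supplies.

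The concrete failure point in your argument is the claimed vanishing on the $E_1$-page. The relative terms are $H^*_T(M_i,M_{i-1})=H^*_{T,c}(M_{(i)})=\bigoplus_j H^*_{bas,c}(\Mr^{p_j})\otimes S(\mft_j^*)$, and open-face-acyclicity controls $\Hb^*(\Mr^{p_j})$, not its compactly supported counterpart: by Poincar\'e duality for the orbifold $\Mr^{p_j}/T$, acyclicity forces $H^*_{bas,c}(\Mr^{p_j})$ to be one-dimensional concentrated in degree $d_{p_j}$ --- nonzero. So the terms you want to kill survive, the spectral sequence does not degenerate for free, and the surjectivity of $H^{*-1}_T(B)\to H^*_T(M,B)$ you reduce to is essentially the Chang--Skjelbred statement (Corollary \ref{cor:torsionspaltet}), which the paper only obtains much later, with substantially more machinery, and --- crucially --- using Theorem \ref{thm:OFA+injective} itself as an input; your route is therefore circular in addition to incomplete. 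A smaller issue: your filtration starts from the identification $B=M_b$, which under the mere ``almost'' hypothesis requires Proposition \ref{prop:OFA+} (and its orientability and effectiveness hypotheses) and can fail to hold in the form you use it when $B$ is connected and larger than $M_b$.
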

Masuda and Panov \cite{MasudaPanov} proved that a torus manifold, i.e., an orientable compact $2\dim T$-dimensional manifold with fixed points, is equivariantly formal with respect to $\ZZ$-coefficients if and only if it is locally standard and its orbit space is closed-face-acyclic. In Section \ref{sec:ActionsWithAcyclicOrbitSpace} we show (Corollary \ref{cor:dimMdimTb}, Theorem \ref{thm:OFAisCM})
\begin{thm*} If $T$ acts effectively on an orientable compact manifold $M$ with open-face-acyclic orbit space, then $\dim M=2\dim T-b$ and the action is Cohen-Macaulay.
\end{thm*}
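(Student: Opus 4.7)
The plan is to establish the dimension formula first via a local slice analysis at a minimum orbit, and then to deduce Cohen-Macaulayness by splicing the restriction-injectivity of Theorem~\ref{thm:OFA+injective} with an inductive argument along the face stratification of $M/T$, yielding exactness of the generalized Atiyah-Bredon sequence of Theorem~\ref{thm:genABsequence}.

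For the dimension formula I would fix a point $p\in M_b$ whose isotropy $H=T_p$ has dimension $\dim T-b$. The slice theorem identifies a neighborhood of $Tp$ with $T\times_H V$ for a real $H$-representation $V$, and passing to orbit spaces identifies a neighborhood of $[p]$ in $M/T$ with $V/H$. Minimality of $b$ forces $V^H=\{0\}$, so $V=\bigoplus_{i=1}^k V_i$ with each $V_i$ a two-dimensional $H$-weight space of nonzero weight $\alpha_i\in\mfh^*$. The local stratification of $V/H$ by the subsets $S\subseteq\{1,\ldots,k\}$ of nonvanishing coordinates realizes the open-face stratification of $M/T$ in a neighborhood of $[p]$. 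I would then show that acyclicity of each of these local open faces rigidly forces the weight configuration to be in general position spanning $\mfh^*$, so that $V/H\cong\RR_{\geq 0}^{\dim H}$: any linear dependence among a subset of weights produces a local open face fibering over a positive-dimensional quotient torus and hence carrying nontrivial reduced cohomology. This yields $\dim V=2\dim H$, whence $\dim M=b+2(\dim T-b)=2\dim T-b$.

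For Cohen-Macaulayness I would invoke Theorem~\ref{thm:genABsequence} and prove exactness of the generalized Atiyah-Bredon sequence based at $M_b$. Theorem~\ref{thm:OFA+injective} already supplies exactness at the first two spots since open-face-acyclicity implies almost open-face-acyclicity. To propagate exactness along the filtration $M_b\subset M_{b+1}\subset\cdots\subset M$ I would induct on $d$ using the local slice picture above: each relative pair $(M_{d+1},M_d)$ excises to an equivariant Thom space of a normal bundle over the $(d+1-b)$-th face skeleton of $M/T$, and the open-face-acyclicity of the relevant closed faces together with the dimension formula supply the concentration and vanishing needed to identify $H^*_T(M_{d+1},M_d)$ with the expected term of the Atiyah-Bredon complex and to match the connecting maps. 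The argument is then formally parallel to Bredon's original exactness proof for torus manifolds, with $M_b$ playing the role of $M^T$.

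The principal obstacle is the rigidity step inside the dimension formula: ruling out every degenerate weight configuration for the slice representation purely from open-face-acyclicity of $V/H$. The likely route is to verify directly that any failure of some $\{\alpha_i\}_{i\in S}$ to extend to a part of an $\RR$-basis of $\mfh^*$ produces an open face of $V/H$ fibering over a positive-dimensional torus, whose reduced cohomology obstructs open-face-acyclicity. Once this local rigidity is secured the dimension formula follows, and the inductive Atiyah-Bredon argument proceeds along standard lines.
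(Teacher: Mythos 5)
Both halves of your plan contain a genuine gap. For the dimension formula, the step you yourself flag as the principal obstacle --- deducing from open-face-acyclicity that the slice representation at a minimal orbit has exactly $\dim T_p$ weights --- does not follow by the route you sketch. Open-face-acyclicity is a condition on the \emph{global} faces $\Mr^q/T$, and it is not inherited by their germs at a point $[p]$ of a smaller stratum: an acyclic open set can perfectly well have non-acyclic punctured neighborhoods of boundary points (already $\RR^3$ versus a punctured ball around a deleted origin, which is homotopy equivalent to $S^2$, shows the pattern), so exhibiting nontrivial reduced cohomology in the local model of an open face near $[p]$ does not contradict acyclicity of the global face containing it. (Moreover, the local model of a degenerate configuration is a cone over a quotient of a sphere, e.g.\ a weighted projective space, as in the $S^1$-action on $S^4$ of Remark \ref{rem:basicsequencenotnecessary}; it does not ``fiber over a quotient torus''.) The paper's argument is irreducibly global: Lemma \ref{lem:extensionforms} uses $\Hb^2(\Mr^p)=0$ to find global basic primitives of the curvature forms $d\theta_i$ of a connection, producing a surjection $H^1(\Mr^p)\to H^1(T)$ via the orbit map, and Lemma \ref{lem:OFAd_p} shows this surjection is obstructed by the normal sphere bundle of a minimal singular stratum unless that stratum has codimension two; Proposition \ref{prop:OFAd_p} and Corollary \ref{cor:dimMdimTb} then follow by induction. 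Some such global input is indispensable, and your proposal does not supply it.

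For Cohen-Macaulayness, proving exactness of the generalized Atiyah--Bredon sequence ``formally parallel to Bredon's original exactness proof'' is circular: Bredon's argument (and its adaptation in Theorem \ref{thm:genABsequence}) takes the freeness, respectively Cohen-Macaulayness, of $H^*_T(M)$ as its essential input, using a Krull-dimension count to kill the connecting maps $H^*_T(M,M_j)\to H^*_T(M,M_{j-1})$. Identifying the terms $H^*_T(M_{d+1},M_d)$ is not the issue --- Example \ref{ex:MiMi-1CM} does this for any action --- the issue is exactness at those terms, for which you give no mechanism beyond the first two spots supplied by Theorem \ref{thm:OFA+injective} and the Chang--Skjelbred argument. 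Indeed, Remark \ref{rem:basicsequencenotnecessary} records that the authors do prove the basic-cohomology Atiyah--Bredon sequence exact under open-face-acyclicity (Proposition \ref{prop:basicABsequence}) but were unable to derive Cohen-Macaulayness from any such exactness argument. The paper's actual proof is entirely different: it shows that $H^*_T(M)$ is generated by equivariant Thom classes of faces (Proposition \ref{prop:genbythomclasses} together with the injectivity from Theorem \ref{thm:OFA+injective}), identifies $H^*_T(M)$ with the face ring $\RR[M/T]$, blows up faces until all face intersections are connected so that the algebraic Cohen-Macaulayness result of Masuda and Panov applies, and then descends through the blow-downs using Lemma \ref{lem:surjectiveontoface} and Lemma \ref{lem:CMseq}. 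Your outline is missing this entire mechanism.
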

Here $b$ is the lowest occuring orbit dimension. To a large extent, our proof uses the ideas of Masuda and Panov; however, there are several differences: Most importantly, several special features such as the so-called canonical models \cite[Section 4.2]{MasudaPanov}, see Davis and Januszkiewicz \cite[Section 1.5]{Davis}, or the equivalent characterization of equivariant formality via $H^{odd}(M)=0$, are not available in our setting. Our chain of arguments instead relies on the fact that equivariant injectivity holds a priori by the results in Section \ref{sec:bottomstratum}, and various other modifications.

Our result states that actions on $2\dim T$-dimensional orientable compact manifolds with open-face-acyclic orbit space are equivariantly formal. The converse of this statement was proven by Bredon \cite[Corollary 3]{Bredon}; we therefore obtain (Theorem \ref{thm:OFAequivariantlyformal}):
\begin{thm*}
A $T$-action on an orientable compact manifold $M$ with $\dim M=2\dim T$ is equivariantly formal if and only if its orbit space is open-face-acyclic.
\end{thm*}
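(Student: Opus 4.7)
The direction equivariantly formal $\Rightarrow$ open-face-acyclic is Bredon's result cited in the introduction, so the work lies in the converse.

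Assume the orbit space of the $T$-action is open-face-acyclic. I would first pass to the quotient $T'=T/K$ by the identity component $K$ of the ineffective kernel. The orbit space is unchanged, so the OFA hypothesis is preserved, and the K\"unneth decomposition $H^*_T(M)\cong H^*(BK)\otimes H^*_{T'}(M)$ as $S(\mft^*)$-modules shows that equivariant formality for the $T$- and $T'$-actions are equivalent. Applying Corollary \ref{cor:dimMdimTb} and Theorem \ref{thm:OFAisCM} to the effective $T'$-action, I obtain that it is Cohen-Macaulay and satisfies $\dim M = 2\dim T' - b'$, where $b'$ is the minimal $T'$-orbit dimension. Combined with $\dim M = 2\dim T$ this forces $\dim K = 0$ and $b'=b=0$, so $M^T\neq\emptyset$ and $H^*_T(M)$ is Cohen-Macaulay of Krull dimension $\dim T$.

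It then remains to deduce freeness of $H^*_T(M)$ over the polynomial ring $S(\mft^*)$. Cohen-Macaulayness with $b=0$ gives $\depth_{S(\mft^*)} H^*_T(M) = \dim T = \depth S(\mft^*)$, and the Auslander-Buchsbaum formula applied to the finitely generated graded module $H^*_T(M)$ yields projective dimension zero. Since finitely generated graded projective modules over a polynomial ring are free, $H^*_T(M)$ is a free $S(\mft^*)$-module, which is the definition of equivariant formality.

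The main work is packaged in Theorem \ref{thm:OFAisCM}; what remains is essentially a standard application of Auslander-Buchsbaum. The only subtle point in the argument above is the effective reduction, which relies on the observation that the dimension hypothesis $\dim M = 2\dim T$ is incompatible with a positive-dimensional ineffective kernel once the formula $\dim M = 2\dim T' - b'$ is in place, as it would force $b' < 0$.
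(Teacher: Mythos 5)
Your proof is correct and follows essentially the same route as the paper: Bredon's result for one direction, and for the converse Theorem \ref{thm:OFAisCM} together with Corollary \ref{cor:dimMdimTb} to force $b=0$ and Cohen-Macaulayness, followed by the Auslander--Buchsbaum argument (which the paper isolates as the proposition that a Cohen-Macaulay action with fixed points is equivariantly formal). The only addition is your reduction to the (almost) effective case, which the paper sidesteps by including effectiveness in the hypothesis of Theorem \ref{thm:OFAequivariantlyformal}.
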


{\it Acknowledgements.} We are grateful to S\"onke Rollenske for various discussions on Cohen-Macaulay modules, and useful comments on a preliminary version of this paper.
We wish to thank Matthias Franz and Volker Puppe for bringing to our attention Proposition 5.1 of their paper \cite{FranzPuppe2003} and the important connection between Cohen-Macaulay and equivariantly formal actions described in Remark \ref{rem:reductiontoequivformal}. These two points were independently noticed by the referee, whom we thank for his careful reading resulting in several improvements of the paper.
\section{Preliminaries}

Throughout this paper, $T=(S^1)^r$ will denote an $r$-dimensional real torus. We will use several cohomology theories, always with $\RR$ as coefficient ring. All of them are defined for arbitrary ($T$-)spaces, but have an equivalent description in the case of a differentiable manifold (with a differentiable $T$-action). $H^*$ denotes singular (or deRham) cohomology. For a $T$-space $X$, $H^*_T(X)$ denotes equivariant cohomology of the $T$-action, i.e., the cohomology of the Borel construction $X_T=X\times_T ET$ \cite{Borel, Hsiang}. In the differentiable case this coincides with equivariant de Rham cohomology, see e.g.~\cite{GuilleminSternberg}. We will use the Cartan model $H^*_T(X)=H(S(\mft^*)\otimes \Omega(X)^T,d_T)$, where $S(\mft^*)$ is the symmetric algebra on the dual of the Lie algebra $\mft$ of $T$, $\Omega(X)^T$ consists of the $T$-invariant differential forms on $X$, and $d_T$ is the equivariant differential. 

Given a $T$-action on an arbitrary space $X$,  the basic cohomology $\Hb^*(X)$ will be understood as the singular cohomology of the quotient $X/T$. In the differentiable case this coincides with the usual definition of basic cohomology \cite[Theorem 30.36]{Michor}: a differential form $\omega$ is {\it basic} if it is horizontal, i.e., the contractions with the $T$-fundamental vector fields vanish, and $T$-invariant. Then $\Hb^*(X)=H(\Omega^*_{bas}(X),d)$, where $(\Omega^*_{bas}(X),d)$ is the complex of basic forms, with $d$ the restriction of the usual differential. 

The projection $X_T\to X/T$ naturally induces a map
\begin{equation} \label{eqn:basicinclusion}
\Hb^*(X)\to H^*_T(X),
\end{equation}
which in the differentiable case can also be seen to be induced by the inclusion of complexes $(\Omega^*_{bas}(X),d)\to (S(\mft^*)\otimes \Omega(X)^T,d_T)$. Note that in general \eqref{eqn:basicinclusion} is not injective, see e.g.~\cite[Example C.18]{GGK}.

There are also relative and compactly supported versions of equivariant and basic cohomology, where the latter will be denoted by an additional index, e.g.~$H^*_{T,c}$ for compactly supported equivariant cohomology \cite[Section 11.1]{GuilleminSternberg}.

In case $T$ acts locally freely on an orientable manifold $M$, then $M/T$ is an orientable orbifold and hence \cite{Satake} there is 
Poincar\'{e} duality for basic cohomology: $\Hb^*(M)=H^*(M/T)\cong H^{\dim M/T-*}_c(M/T)=H^{\dim M/T-*}_{bas,c}(M)$.

For a $T$-action on a manifold $M$ and any subspace $\mfk\subset \mft$, let $M^\mfk$ be the common zero set of all fundamental vector fields induced by $\mfk$. In other words, $M^\mfk$ is the fixed point set of the action of the connected Lie subgroup of $T$ with Lie algebra $\mfk$. For $p\in M$, let $M^p$ be the connected component of $M^{\mft_p}$ that contains $p$. A point is {\it regular} if its isotropy algebra is minimal among all isotropy algebras, and nonregular points are called {\it singular}. The regular set of the $T$-action on $M$ is denoted $\Mr$, and the singular set $\Ms$. The respective regular and singular sets of the $T$-action on $M^p$ are $\Mr^p=\{q\in M^p\mid \mft_q=\mft_p\}$ and $\Ms^p= M^p\setminus \Mr^p$. 
The $M^p$ are partially ordered by inclusion. The minimal elements $M^p$ with respect to this ordering are exactly those with $M^p=\Mr^p$. By definition, the {\it bottom stratum} $B$ of the action is the union of those minimal elements:
\[
B=\{p\in M\mid M^p=\Mr^p\}.
\]
(Strictly speaking, one should refer to $B$ as the infinitesimal bottom stratum, as for the definition of regularity of a point we only consider its isotropy algebra instead of its isotropy group.) The set of fixed points $M^\mft$ is contained in $B$.

For $i\geq 0$, let $M_i$ be the union of orbits of dimension $\leq i$. Clearly,
\[
M_i=\bigcup_{p\in M:\, \dim \mft_p\geq r-i} M^p.
\]
Furthermore, we define $M_{(i)}=M_i\setminus M_{i-1}$ to be the union of orbits of dimension equal to $i$. The $M_{(i)}$ are disjoint unions of submanifolds of type $\Mr^p$. In general, $M_i$ is not a submanifold of $M$, but still it is an equivariant strong deformation retract of some neighborhood in $M$ (or $M_{i+1}$). Of importance will be the cohomology of the pair $(M_i,M_{i-1})$. For later use, we note that 
\[
\Hb^*(M_i,M_{i-1})=H^*_{bas,c}(M_{(i)})=H^*_c(M_{(i)}/T)
\]
and
\[
H^*_T(M_i,M_{i-1})=H^*_{T,c}(M_{(i)}).
\]

\section{General lemmata}

In this section, we collect some more or less well-known lemmata on equivariant cohomology. If $A$ is a finitely generated graded $S(\mft^*)$-module, we define the {\it support} of $A$ as in \cite[Section 11.3]{GuilleminSternberg} to be the set of complex zeroes of the annihilator ideal of $A$:
\[
\supp A=\{x\in \mft\otimes \CC\mid f(x)=0 \text{ for all } f\in S(\mft^*) \text{ with } fA=0\}.
\]
Because $A$ is graded, the annihilator ideal is a graded ideal, and $\supp A$ is a conic subvariety of $\mft\otimes \CC$, i.e., for $x\in \supp A$ and $\lambda\in \CC$ we have $\lambda x\in \supp A$. For an element $\alpha\in A$, we define the support of $\alpha$ to be the support of the submodule of $A$ generated by $\alpha$: $\supp \alpha = \supp S(\mft^*)\alpha$. Clearly, for every $\alpha\in A$ we have $\supp \alpha\subset \supp A$.

\begin{lem} \label{lem:trivialpartofaction}
Let a torus $T$ act on a manifold $M$. If $K\subset T$ is any (not necessarily closed) Lie subgroup that acts trivially on $M$ and $K'\subset T$ a subtorus of $T$ such that $\mfk \oplus \mfk'=\mft$, then
\[
H^*_T(M)=S(\mfk^*)\otimes_{\RR} H^*_{K'}(M)
\]
as an $S(\mft^*)=S(\mfk^*)\otimes S(\mfk'^*)$-algebra.
\end{lem}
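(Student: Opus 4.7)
The plan is to prove this by direct computation in the Cartan model, exploiting the fact that the $K$-fundamental vector fields vanish identically on $M$.

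First I would record the group-theoretic consequence of the Lie algebra splitting $\mathfrak{k} \oplus \mathfrak{k}' = \mathfrak{t}$: the multiplication map $K \times K' \to T$ has surjective differential at the identity, hence its image is open and (since $T$ is connected) equal to $T$. Thus $T = K \cdot K'$ at the group level. Because $K$ acts trivially on $M$, $T$-invariance of a differential form reduces to $K'$-invariance, giving $\Omega(M)^T = \Omega(M)^{K'}$.

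Next I would unpack the Cartan differential. Pick bases $\{X_1,\dots,X_k\}$ of $\mathfrak{k}$ and $\{X_{k+1},\dots,X_r\}$ of $\mathfrak{k}'$ with dual bases $\{x_i\}$; these identify $S(\mathfrak{t}^*) = S(\mathfrak{k}^*)\otimes S(\mathfrak{k}'^*)$. Since $K$ acts trivially, the fundamental vector fields $X_1,\dots,X_k$ are zero on $M$, so the contractions $\iota_{X_i}$ vanish for $i\le k$. Therefore the Cartan differential is
\[
d_T(p_1 \otimes p_2 \otimes \omega) = p_1 \otimes p_2 \otimes d\omega - \sum_{a=k+1}^{r} p_1 \otimes (x_a p_2) \otimes \iota_{X_a}\omega = p_1 \otimes d_{K'}(p_2 \otimes \omega),
\]
where $p_1\in S(\mathfrak{k}^*)$, $p_2\in S(\mathfrak{k}'^*)$ and $\omega\in \Omega(M)^{K'}$. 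In other words, the Cartan complex of $T$ is the tensor product of complexes $(S(\mathfrak{k}^*),0) \otimes_{\RR} (S(\mathfrak{k}'^*)\otimes \Omega(M)^{K'}, d_{K'})$.

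Taking cohomology and using that $S(\mathfrak{k}^*)$ is a free (in particular flat) $\RR$-module with trivial differential yields
\[
H^*_T(M) = S(\mathfrak{k}^*) \otimes_{\RR} H^*_{K'}(M).
\]
This isomorphism is clearly compatible with the algebra structure and with the $S(\mathfrak{t}^*) = S(\mathfrak{k}^*)\otimes S(\mathfrak{k}'^*)$-module structure, since both factors act in the expected way on the two sides of the tensor product. There is no real obstacle here; the only small point requiring care is the observation $\Omega(M)^T = \Omega(M)^{K'}$ in the case where $K$ fails to be closed, which is handled by the remark that $T=K\cdot K'$ at the group level.
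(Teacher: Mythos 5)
Your proposal is correct and follows essentially the same route as the paper: identify the Cartan complex of $T$ as $S(\mfk^*)\otimes\bigl(S(\mfk'^*)\otimes\Omega(M)^{K'}\bigr)$ with the equivariant differential vanishing on the $S(\mfk^*)$ factor, then take cohomology. You merely spell out two points the paper leaves implicit, namely that $T=K\cdot K'$ forces $\Omega(M)^T=\Omega(M)^{K'}$ and that the K\"unneth step uses flatness of $S(\mfk^*)$ over $\RR$; both are correct and harmless additions.
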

\begin{proof}
We have $S(\mft^*)\otimes \Omega(M)^T=S(\mfk^*)\otimes (S(\mfk'^*)\otimes \Omega(M)^{K'}).$
The equivariant differential $d_T$ leaves the space $S(\mfk'^*)\otimes \Omega(M)^{K'}$ invariant and is zero on $S(\mfk^*)$. Thus, $
H^*_T(M)=S(\mfk^*)\otimes H^*(S(\mfk'^*)\otimes \Omega(M)^{K'},d_{K'})=S(\mfk^*)\otimes H^*_{K'}(M).$
\end{proof}

\begin{lem}\label{lem:kernelM^K}
Let $T$ act on a compact manifold $M$ and  $\mfk\subset \mft$ be any subspace. Then the kernel of the restriction map $H^*_T(M)\to H^*_T(M^\mfk)$ is given by 
$\{\alpha\mid \mfk \otimes \CC \not\subset\supp\alpha\}$.
\end{lem}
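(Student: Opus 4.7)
The plan is to establish the two inclusions separately via the long exact sequence of the pair $(M, M^\mfk)$, after a preliminary reduction to the case where $\mfk$ is the Lie algebra of a closed subtorus. First I would replace $\mfk$ by $\mfl := \operatorname{Lie}(\overline{\exp \mfk})$, the Lie algebra of the closed subtorus $K \subseteq T$ generated by $\mfk$: this leaves $M^\mfk = M^K$ unchanged, and since $\mfk \subseteq \mfl$ forces $\mathfrak{p}_\mfl \subseteq \mathfrak{p}_\mfk$ as ideals in $S(\mft^*)$ (where $\mathfrak{p}_\mfk = \ker(S(\mft^*) \to S(\mfk^*))$), absence of $\mathfrak{p}_\mfl$-torsion in any module implies absence of $\mathfrak{p}_\mfk$-torsion. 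I would then choose a complementary subtorus $K'$ so that $\mft = \mfl \oplus \mfl'$.

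For the inclusion $\ker(r) \subseteq \{\alpha \mid \mfk \otimes \CC \not\subset \supp\alpha\}$, I plan to use the long exact sequence
\[
\cdots \to H^*_T(M, M^K) \xrightarrow{j^*} H^*_T(M) \xrightarrow{r} H^*_T(M^K) \to \cdots
\]
to write any $\alpha \in \ker r$ as $\alpha = j^*(\gamma)$, giving $\supp \alpha \subseteq \supp H^*_T(M, M^K)$. The latter is contained in $\bigcup_{p \in M \setminus M^K} \mft_p \otimes \CC$, which is a \emph{finite} union since compactness of $M$ gives only finitely many orbit types; this containment follows, via excision and Mayer-Vietoris, from the basic computation $\supp H^*_T(T/H) = \operatorname{Lie}(H) \otimes \CC$ for a single orbit. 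For each such $p$ one has $\mfk \not\subseteq \mft_p$, so each $\mft_p \otimes \CC$ is a proper subspace of $\mft \otimes \CC$ not containing $\mfk \otimes \CC$; since a $\CC$-vector space is not a finite union of proper subspaces and $\mfk \otimes \CC$ is irreducible, $\mfk \otimes \CC$ is not contained in the union, and hence not in $\supp \alpha$.

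For the reverse inclusion, if $\mfk \otimes \CC \not\subset \supp \alpha$ then by definition of support there is $f \in \Ann(\alpha)$ with $f|_\mfk \neq 0$, i.e., $f \notin \mathfrak{p}_\mfk$. Applying the $S(\mft^*)$-linear map $r$ gives $f \cdot r(\alpha) = 0$, so it suffices to show $H^*_T(M^K)$ has no $\mathfrak{p}_\mfl$-torsion. To this end I would apply Lemma~\ref{lem:trivialpartofaction} to write $H^*_T(M^K) = S(\mfl^*) \otimes_\RR H^*_{K'}(M^K)$, and recognize this as the base change $S(\mft^*) \otimes_{S((\mfl')^*)} H^*_{K'}(M^K)$ along the flat inclusion $S((\mfl')^*) \hookrightarrow S(\mft^*)$. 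The standard base-change formula for associated primes under a flat extension then identifies $\Ass_{S(\mft^*)}(H^*_T(M^K))$ with primes of the form $\mathfrak{q} \cdot S(\mft^*)$ for $\mathfrak{q} \in \Ass_{S((\mfl')^*)}(H^*_{K'}(M^K))$. Since $H^*_{K'}(M^K)$ is a finitely generated graded module over a graded ring, every such $\mathfrak{q}$ is homogeneous and hence contained in the augmentation ideal $S_+((\mfl')^*)$; pulling back to $S(\mft^*)$ gives $\mathfrak{q} \cdot S(\mft^*) \subseteq \mathfrak{p}_\mfl$, yielding the desired torsion-freeness.

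The hard part is the torsion-freeness of $H^*_T(M^K)$ in the second inclusion: the tensor-product decomposition from Lemma~\ref{lem:trivialpartofaction} alone is not enough: one must also invoke the base-change formula for associated primes under a flat extension and the homogeneity of associated primes of a finitely generated graded module over a graded ring. The first inclusion, once one accepts the standard support computation for $H^*_T(M, M^K)$, rests on the elementary fact that a vector space over an infinite field is not a finite union of proper subspaces.
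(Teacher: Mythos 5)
Your proof is correct and follows essentially the same route as the paper: the forward inclusion rests on the localization-theorem bound $\supp \ker \subset \bigcup_{\mfh}\mfh\otimes\CC$ over isotropy algebras occurring in $M\setminus M^\mfk$ (which the paper simply cites from Guillemin--Sternberg, Theorem 11.4.2, rather than re-deriving via the long exact sequence and stratification as you do), and the reverse inclusion uses the tensor decomposition $H^*_T(M^\mfk)=S(\mfk^*)\otimes H^*_{K'}(M^\mfk)$ from Lemma \ref{lem:trivialpartofaction}. Your preliminary passage to the closed subtorus $\overline{\exp\mfk}$ and the flat-base-change/associated-primes justification of torsion-freeness merely spell out details the paper treats as immediate.
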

\begin{proof}
Note that $M^\mfk$ is a closed $T$-invariant submanifold of $M$. By \cite[Theorem 11.4.2]{GuilleminSternberg} the kernel of the restriction map $i^*$ has support in the union of those subalgebras that occur as isotropy algebras in $M\setminus M^\mfk$, i.e.
\[
\supp\ker i^*\subset \bigcup_{\mfh \text{ isotropy algebra}, \mfk\not\subset\mfh} {\mfh \otimes \CC}.
\]
Thus, $\mfk\otimes \CC \not\subset \supp\ker i^*$. Since for any $\alpha\in \ker i^*$, we have $\supp \alpha\subset \supp \ker i^*$, we obtain $\mfk\otimes \CC \not\subset \supp \alpha$.

Conversely, let $\alpha\not\in \ker i^*$ and choose a subtorus $K'\subset T$ such that $\mft=\mfk\oplus \mfk'$. Then, $0\neq i^*\alpha\in H^*_T(M^\mfk)=S({\mathfrak k}^*)\otimes H^*_{K'}(M^\mfk) $ by the previous lemma, so $\mfk\otimes \CC\subset \supp i^*\alpha\subset \supp \alpha$. 
\end{proof}

\begin{prop}\label{prop:characterizationlocallyfree}
Let a torus $T$ act on a manifold $M$. Then the following properties are equivalent:
\begin{enumerate}
	\item The action is locally free.
	\item $H^k_T(M)=0$ for large $k$.
	\item $\supp H^*_T(M)=\{0\}$.
	\item The support of $1\in H^*_T(M)$ is $\{0\}$.
\end{enumerate}
\end{prop}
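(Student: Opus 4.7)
My plan is to prove the four equivalences by a cycle $(1)\Rightarrow (2)\Rightarrow (3)\Rightarrow (4)\Rightarrow (1)$. The implication $(3)\Rightarrow(4)$ is immediate since $\supp 1\subset \supp H^*_T(M)$ by definition of support: any element of $\Ann H^*_T(M)$ in particular annihilates $1$.

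For $(1)\Rightarrow(2)$, I would use that when the action is locally free the Borel construction $M_T \to M/T$ has fibers that are $\RR$-acyclic (they are classifying spaces of finite isotropy groups), so $H^*_T(M)\cong H^*(M/T)$, which vanishes in degrees larger than $\dim M-r$. Alternatively, the Cartan model simplifies to the complex of basic forms under local freeness, yielding $H^*_T(M)\cong \Hb^*(M)$ directly.

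For $(2)\Rightarrow(3)$ the essential observation is purely formal in the graded setting: if $H^k_T(M)=0$ for all $k>N$ and $f\in S(\mft^*)$ is homogeneous of positive degree $d$, then for any $\alpha\in H^j_T(M)$ the element $f^n\alpha$ lies in $H^{j+nd}_T(M)$, which vanishes as soon as $nd>N-j\geq -N$. Thus a single power $f^{\lceil N/d\rceil+1}$ annihilates all of $H^*_T(M)$, so every homogeneous generator of $S(\mft^*)_+$ lies in $\sqrt{\Ann H^*_T(M)}$. Hence $\supp H^*_T(M)=V(\Ann H^*_T(M))\subset V(S(\mft^*)_+)=\{0\}$.

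The substantive step is $(4)\Rightarrow(1)$, which I expect to be the main obstacle, but which is handled cleanly by Lemma \ref{lem:kernelM^K}. Suppose for contradiction that the action is not locally free; then some $p\in M$ has $\mft_p\neq 0$, and picking any nonzero $X\in\mft_p$ I set $\mfk=\RR X$. The submanifold $M^\mfk$ contains the orbit of $p$ and is in particular nonempty, so the restriction map $i^*:H^*_T(M)\to H^*_T(M^\mfk)$ sends $1$ to $1\neq 0$ and therefore $1\notin\ker i^*$. Lemma \ref{lem:kernelM^K} then forces $\mfk\otimes\CC\subset\supp 1$, contradicting $\supp 1=\{0\}$. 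The only delicate point is that Lemma \ref{lem:kernelM^K} is stated under a compactness assumption on $M$; I would either inherit that hypothesis for the proposition or reduce to it by restricting to a $T$-invariant compact neighborhood of $T\cdot p$, where the same support argument for $1$ applies.
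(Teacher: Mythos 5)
Your proof is correct and follows essentially the same cycle $(1)\Rightarrow(2)\Rightarrow(3)\Rightarrow(4)\Rightarrow(1)$ with the same arguments as the paper's own proof. Your observation about the compactness hypothesis in Lemma \ref{lem:kernelM^K} is a fair one (the paper applies the same argument for arbitrary $M$ without comment), and your proposed reduction to a compact invariant neighbourhood of the orbit $T\cdot p$ legitimately closes that point.
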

\begin{proof} We show $(1)\Rightarrow(2)\Rightarrow(3)\Rightarrow(4)\Rightarrow(1)$.
If the action is locally free, $H^*_T(M)=H^*(M/T)$  (this is standard for free actions, but also true  for only locally free actions because we are using real coefficients, see e.g.~Section C.2 of \cite{GGK}), so $(1)$ implies $(2)$. 

Assuming $(2)$, there is some $N>0$ such that any closed equivariant differential form of degree at least $N$ is exact. Thus, writing $S(\mft^*)=\RR[u_1,\ldots,u_n]$, it follows that for any $i$ some power of $u_i$ annihilates $H^*_T(M)$. Therefore the common zero set of the polynomials that annihilate $H^*_T(M)$ consists only of the element $0$, hence $(3)$. 

It is clear that $(3)$ implies $(4)$, so it remains to show that $(4)$ implies $(1)$. Clearly, the complexification of every isotropy algebra $\mfk$ is contained in the support of $1$, because $1$ is not in the kernel of $H^*_T(M)\to H^*_T(M^\mfk)$. Thus, if the support of $1$ is $\{0\}$, the action is locally free.
\end{proof}

\section{Equivariant formality}\label{sec:EquivariantFormality}

The $T$-action on $M$ is equivariantly formal in the sense of \cite{GKM} if the cohomology spectral sequence associated with the fibration $M\times_T ET\to BT$ collapses at $E_2$. The following are well-known equivalent characterizations of equivariant formality:
\begin{enumerate}
\item $H^*_T(M)$ is free as an $S(\mft^*)$-module \cite[Corollary 4.2.3]{AlldayPuppe}.
\item $\dim H^*(M^T)=\dim H^*(M)$ \cite[Corollary IV.2]{Hsiang}. The inequality $\leq$ is true for any $T$-action.
\end{enumerate}
An important sufficient condition for equivariant formality is $H^{odd}(M)=0$, see \cite[Theorem 6.5.3]{GuilleminSternberg}.

Also the following proposition is fairly standard. One way to prove it is as an application of characterization (2) above, see e.g.~the proof of the Main Lemma in \cite{Bredon}; a proof using (1) is given in \cite[Lemma 2]{Brion}. 
\begin{prop}\label{prop:eqformalM^K}
If the $T$-action on $M$ is equivariantly formal, then for any subtorus $K\subset T$, the $T/K$-action on $M^\mfk$ is equivariantly formal.
\end{prop}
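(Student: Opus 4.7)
The cleanest approach is via characterization (2) in terms of total Betti numbers, so I would use $\dim H^*(M^T)=\dim H^*(M)$ as the working definition of equivariant formality. Write $N := M^{\mathfrak{k}}$; since $K$ is a subtorus (hence connected), $N$ is also the fixed point set $M^K$ of the $K$-action, and $K$ acts trivially on $N$, so the $T$-action on $N$ descends to an action of $T/K$. Moreover, a point of $N$ is fixed by $T/K$ precisely when it is fixed by $T$, so
\[
N^{T/K} = M^T.
\]
Our goal is therefore to establish $\dim H^*(M^T) = \dim H^*(N)$.

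The plan is to sandwich the quantity $\dim H^*(N)$ between two numbers that coincide by hypothesis. Applying the universal Hsiang inequality ($\dim H^*(X^S)\le \dim H^*(X)$, valid for any torus action) in two different ways gives
\[
\dim H^*(M^T) \;\le\; \dim H^*(N) \;\le\; \dim H^*(M).
\]
The right inequality is the Hsiang inequality applied to the $K$-action on $M$, whose fixed set is $N$. The left inequality is the Hsiang inequality applied to the $T/K$-action on $N$, whose fixed set is $N^{T/K}=M^T$ by the identification above. By equivariant formality of $T\curvearrowright M$, the outer terms agree, so both inequalities are equalities. In particular $\dim H^*(N^{T/K})=\dim H^*(N)$, which is precisely equivariant formality of the $T/K$-action on $N=M^{\mathfrak{k}}$.

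There is no serious obstacle; the only two points deserving care are (a) the identification $N^{T/K}=M^T$, which needs the connectedness of $K$ so that $M^K = M^{\mathfrak{k}}$, and (b) ensuring that the Hsiang inequality is really being applied to torus actions on each side, which it is ($K$ is a torus, and $T/K$ is a torus since $K\subset T$ is a subtorus). An alternative route would go through characterization (1) by showing that $H^*_{T/K}(N)$ is $S((\mathfrak{t}/\mathfrak{k})^*)$-free, for instance by combining freeness of $H^*_T(M)$ with Lemma~\ref{lem:trivialpartofaction} applied to the trivial $K$-action on $N$ to write $H^*_T(N)=S(\mathfrak{k}^*)\otimes H^*_{T/K}(N)$, and then invoking a localization-type argument identifying the restriction $H^*_T(M)\to H^*_T(N)$ after inverting suitable elements; but this is heavier machinery than needed, so I would stick with the Betti-number argument above.
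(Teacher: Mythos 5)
Your argument is correct and is precisely the route the paper indicates: it does not write out a proof but says the proposition follows "as an application of characterization (2)," i.e.\ the Hsiang criterion $\dim H^*(M^T)=\dim H^*(M)$, which is exactly your sandwich $\dim H^*(M^T)\le \dim H^*(M^{\mfk})\le \dim H^*(M)$ using the two torus actions $K\curvearrowright M$ and $T/K\curvearrowright M^{\mfk}$. Nothing further is needed.
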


The next lemma can be proven as a direct application of characterization (2) above.
\begin{lem} \label{lem:eqformalcomponent}
A $T$-action on $M$ is equivariantly formal if and only if the $T$-action on each connected component of $M$ is equivariantly formal.
\end{lem}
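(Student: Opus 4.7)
The plan is to apply characterization (2) of equivariant formality componentwise. The first observation is that since $T$ is connected, every $T$-orbit lies entirely in a single connected component, so $T$ preserves each component and the $T$-action restricts to each connected component $M_i$ of $M$. Writing $M = \bigsqcup_i M_i$, we then have $M^T = \bigsqcup_i M_i^T$ (where $M_i^T$ denotes the $T$-fixed point set of $M_i$, which coincides with $M_i \cap M^T$).

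Next I would use that singular cohomology turns disjoint unions into direct sums: $H^*(M) = \bigoplus_i H^*(M_i)$ and $H^*(M^T) = \bigoplus_i H^*(M_i^T)$. Consequently
\[
\dim H^*(M) = \sum_i \dim H^*(M_i), \qquad \dim H^*(M^T) = \sum_i \dim H^*(M_i^T).
\]
By the inequality part of characterization (2) which holds for any $T$-action, we have $\dim H^*(M_i^T) \leq \dim H^*(M_i)$ termwise. Therefore equality of the two total sums is equivalent to termwise equality, which via characterization (2) gives the claim in both directions.

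The only mild subtlety is making sense of the equalities if some component has infinite-dimensional cohomology; but in that case both sides are $\infty$ componentwise and in total (or one restricts to the setting where characterization (2) is formulated), and the argument $a_i \leq b_i$ with $\sum a_i = \sum b_i$ implies $a_i = b_i$ goes through in $\mathbb{N} \cup \{\infty\}$. So I do not expect any real obstacle: the proof is essentially the additivity of fixed-point and total Betti numbers over connected components combined with characterization (2).
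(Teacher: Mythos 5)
Your proof is correct and is exactly the argument the paper intends: the paper states that the lemma "can be proven as a direct application of characterization (2)," and your componentwise additivity of $\dim H^*(M)$ and $\dim H^*(M^T)$ combined with the termwise inequality is precisely that application. (Since $M$ is compact in this setting, the finiteness caveat you raise does not even arise.)
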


The following corollary appears (with a different proof) as Theorem 11.6.1 in \cite{GuilleminSternberg}, and as Proposition C.28 in \cite{GGK}.
\begin{cor} \label{cor:bottomstrofequivformal} If the $T$-action on $M$ is equivariantly formal and $K\subset M$ is any subtorus, each connected component of $M^\mfk$ contains a $T$-fixed point. In other words: the bottom stratum of an equivariantly formal action is equal to the set of fixed points.
\end{cor}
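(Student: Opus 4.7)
The plan is to apply the previously established machinery, chain it through a connected component of $M^{\mfk}$, and use the numerical characterization of equivariant formality.

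First, I would invoke Proposition \ref{prop:eqformalM^K} to conclude that the induced $T/K$-action on $M^{\mfk}$ is equivariantly formal. Then by Lemma \ref{lem:eqformalcomponent}, the restricted $T/K$-action on any connected component $C\subset M^{\mfk}$ is again equivariantly formal. At this point the problem has been reduced to showing that an equivariantly formal torus action on a nonempty connected (compact) manifold must have a fixed point.

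For that reduction I would use characterization (2) of equivariant formality recalled at the beginning of Section \ref{sec:EquivariantFormality}: the equality $\dim H^*(C^{T/K})=\dim H^*(C)$ holds. Since $C$ is nonempty and connected, $\dim H^*(C)\geq 1$, so $C^{T/K}$ is nonempty. Because $K$ acts trivially on $M^{\mfk}$ (and hence on $C$), a point of $C$ fixed by $T/K$ is a point of $C$ fixed by all of $T$; this yields the desired $T$-fixed point in $C$.

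For the reformulation in the second sentence, I would verify that the bottom stratum $B$ equals $M^T$. The inclusion $M^T\subset B$ is immediate from the definition: if $p\in M^T$ then $\mft_p=\mft$ and every $q\in M^p$ satisfies $\mft_q=\mft=\mft_p$. Conversely, if $p\in B$, let $C$ be the connected component of $M^{\mft_p}$ through $p$; by the first part $C$ contains a $T$-fixed point $p'$, and since $p\in B$ forces every point of $M^p=C$ to have isotropy algebra exactly $\mft_p$, we get $\mft_p=\mft_{p'}=\mft$, so $p\in M^T$.

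I do not expect a genuine obstacle: the whole argument is a bookkeeping exercise that packages Proposition \ref{prop:eqformalM^K}, Lemma \ref{lem:eqformalcomponent}, and characterization (2) of equivariant formality. The only mildly delicate point is remembering that one must pass to a connected component of $M^{\mfk}$ before invoking the dimension equality, so that the cohomology lower bound $\dim H^*(C)\geq 1$ is really available.
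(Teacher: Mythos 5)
Your proof is correct and follows exactly the paper's own argument: Proposition \ref{prop:eqformalM^K}, then Lemma \ref{lem:eqformalcomponent}, then the observation that a nonempty equivariantly formal $T$-space has a fixed point via characterization (2). The paper leaves the verification that $B=M^T$ implicit, so your last paragraph is just a more explicit write-up of the same route.
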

\begin{proof}
This follows from Proposition \ref{prop:eqformalM^K} and Lemma \ref{lem:eqformalcomponent} because every equivariantly formal action has a fixed point. 
\end{proof}
The relevance of the notion of equivariant formality emerges from the fact that the equivariant cohomology of spaces satisfying this condition is (relatively) easy to compute, thanks to the exact sequence
\[
0\to H^*_T(M)\to H^*_T(M_0) \overset{\partial}{\to} H^*_T(M_1,M_0).
\]
Here, $\partial$ is the boundary operator in the long exact sequence of the pair $(M_1,M_0)$. Injectivity of $H^*_T(M)\to H^*_T(M_0)$ follows because the kernel of this map is the module of torsion elements \cite[Theorem 11.4.4]{GuilleminSternberg} and $H^*_T(M)$ is torsion-free\footnote{This condition is not equivalent to equivariant formality; see \cite{FranzPuppe2008} for an example.}. Exactness at $H^*_T(M_0)$ is the so-called Chang-Skjelbred Lemma \cite[Lemma 2.3]{ChangSkjelbred}, see also \cite[Section 11.5]{GuilleminSternberg}.

The following characterization of equivariant formality is an extension of the exact sequence above.
\begin{thm}[\cite{Bredon}, \cite{FranzPuppe}]\label{thm:atiyahbredonsequence} The $T$-action on $M$ is equivariantly formal if and only if the sequence
$$0\to H^*_T(M)\to H^*_T(M_0)\to H^*_T(M_1,M_0)\to \ldots \to H^*_T(M,\Ms)\to 0$$
is exact.
\end{thm}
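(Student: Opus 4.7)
The plan is to exploit the filtration $\emptyset = M_{-1}\subset M_0\subset M_1\subset\cdots\subset M_r = M$ of $M$ by orbit dimension and translate exactness of the Atiyah-Bredon sequence into a purely algebraic statement about $H^*_T(M)$ as an $S(\mft^*)$-module. This filtration yields a cohomological spectral sequence with
\[
E_1^{p,q} = H^{p+q}_T(M_p, M_{p-1})
\]
converging to $H^{p+q}_T(M)$; the $d_1$-differential is the connecting homomorphism of the triple $(M_{p+1},M_p,M_{p-1})$, so the $E_1$-page is literally the Atiyah-Bredon complex. Exactness of that complex is therefore equivalent to the strong degeneration $E_2^{p,q}=0$ for $p\geq 1$ together with $E_2^{0,q}=H^q_T(M)$. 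I would work from this equivalence in both directions.

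The first real step is to analyze the terms $F^p := H^*_T(M_p,M_{p-1}) = H^*_{T,c}(M_{(p)})$ as $S(\mft^*)$-modules. On each connected component of $M_{(p)}$ the isotropy algebra is a fixed subspace $\mfk\subset\mft$ of dimension $r-p$; choosing a complementary subtorus $K'$ with $\mft=\mfk\oplus\mfk'$, Lemma \ref{lem:trivialpartofaction} applied to the component gives $H^*_{T,c}(\text{component}) \cong S(\mfk^*)\otimes H^*_{bas,c}(\text{component})$, the second factor being finite-dimensional since the residual $K'$-action is locally free. Summing over components, each $F^p$ is a direct sum of modules of the shape $S(\mfk^*)\otimes V$ with $V$ finite-dimensional, hence Cohen-Macaulay of Krull dimension and depth exactly $r-p$. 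In particular $F^0 = H^*(M^T)\otimes S(\mft^*)$ is free and $F^r$ is finite-dimensional.

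For the implication \emph{exactness $\Rightarrow$ equivariant formality}, I would chop the Atiyah-Bredon sequence into short exact sequences $0\to K^p\to F^p\to K^{p+1}\to 0$, with $K^0 = H^*_T(M)$ and $K^{r+1}=0$. Applying the depth inequality $\depth A\geq\min(\depth B,\depth C+1)$ and the values $\depth F^p = r-p$ from the previous step, a downward induction starting at $\depth K^r = \depth F^r = 0$ gives $\depth K^p\geq r-p$, and in particular $\depth H^*_T(M)\geq r$. Since $H^*_T(M)$ is finitely generated over $S(\mft^*)$, the Auslander-Buchsbaum formula over the regular graded ring $S(\mft^*)$ forces projective dimension zero; for graded modules over a polynomial ring this means freeness, i.e., equivariant formality.

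The opposite direction, \emph{equivariant formality $\Rightarrow$ exactness}, is Bredon's original result and is the more delicate part. One natural route is induction over the isotropy-type stratification, starting at $M^T$ and using Proposition \ref{prop:eqformalM^K} to propagate equivariant formality to $M^{\mfk}$, combined with Mayer-Vietoris to splice together the Chang-Skjelbred-type exact sequences. A more streamlined alternative, which simultaneously handles both directions, is to identify the $p$-th cohomology of the Atiyah-Bredon complex with the local cohomology $H^p_{\mft^*}(H^*_T(M))$ at the irrelevant ideal; by Grothendieck vanishing these groups are zero for $p<\depth H^*_T(M)$, so freeness makes them vanish for $p<r$ and forces exactness. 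The main obstacle in the plan is precisely this translation between the geometric boundary operators and the algebraic local cohomology functors; once in place, both implications drop out of Auslander-Buchsbaum applied to the structure computed in the second paragraph.
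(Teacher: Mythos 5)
Your backward direction (exactness $\Rightarrow$ equivariant formality) is correct and is essentially the argument the paper gives for direction $(2)\Rightarrow(1)$ of its generalization, Theorem \ref{thm:genABsequence}, specialized to $b=0$: your kernels $K^p$ are the modules $H^*_T(M,M_{p-1})$ of the short exact sequences \eqref{eqn:atiyahbredonshort}, your computation $\depth F^p=\dim F^p=r-p$ is Example \ref{ex:MiMi-1CM}, the downward induction via the depth inequality is Lemma \ref{lem:depthses}, and the final passage from $\depth H^*_T(M)=r$ to freeness via the graded Auslander--Buchsbaum formula is exactly the unnumbered Proposition following Theorem \ref{thm:genABsequence}. (The paper itself only cites Bredon and Franz--Puppe for Theorem \ref{thm:atiyahbredonsequence}, but the proof it writes out for the generalization covers this direction completely.)

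The forward direction (equivariant formality $\Rightarrow$ exactness) is where your proposal has a genuine gap: neither of your two sketches is carried out, and both omit the mechanism that actually makes the sequence exact. The paper's argument (again in Theorem \ref{thm:genABsequence}) shows by induction that each connecting map $H^*_T(M,M_j)\to H^*_T(M,M_{j-1})$ vanishes, because its image is a submodule of a Cohen--Macaulay module of Krull dimension $r-j$ --- hence zero or of Krull dimension $r-j$ by Lemma \ref{lem:submodulesCM} --- while the localization theorem forces $\dim H^*_T(M,M_j)\leq r-j-1$ by \eqref{eq:dimMMj}. This interplay between the localization-theorem dimension bound and the rigidity of submodules of Cohen--Macaulay modules is absent from your plan. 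Your first sketch (inducting over isotropy types and ``splicing Chang--Skjelbred-type sequences by Mayer--Vietoris'') never identifies what forces the higher connecting homomorphisms to vanish, which is the entire content of Bredon's Main Lemma. Your second sketch assumes the identification of the cohomology of the Atiyah--Bredon complex with $\Ext$- or local cohomology modules of $H^*_T(M)$ over $S(\mft^*)$; that identification is itself a substantial later theorem of Allday, Franz and Puppe, proved via equivariant Poincar\'e--Alexander--Lefschetz duality and requiring orientability of $M$ (not assumed in Theorem \ref{thm:atiyahbredonsequence}), so it cannot be invoked as a black box here. You correctly flag this translation as the main obstacle, but that means the implication is asserted rather than proven.
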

In this sequence, the maps $H^*_T(M_i,M_{i-1})\to H^*_T(M_{i+1},M_{i})$ are the boundary operators of the triples $(M_{i+1},M_i,M_{i-1})$. Exactness of the sequence under the condition of equivariant formality was proven by Bredon \cite[Main Lemma]{Bredon}, adapting an analogous result of Atiyah in equivariant $K$-theory \cite[Lecture 7]{AtiyahSeq}. Following Franz and Puppe, we will therefore refer to this sequence as the Atiyah-Bredon sequence. The converse direction is due to Franz and Puppe \cite[Theorem 1.1]{FranzPuppe}. Note that because we are using real coefficients we do not need any assumptions on the connectedness of isotropy groups, as is pointed out in Remark 1.2 of \cite{FranzPuppe}. For a version of the result of Atiyah and Bredon for other coefficients, see \cite{FranzPuppe2003}.

The following proposition shows that exactness of the Atiyah-Bredon sequence at $H^*_T(M)$ and $H^*_T(M_0)$ is implied by exactness of the rest of the sequence.
\begin{prop} \label{prop:vorneexakt}
Let $H^*$ denote either equivariant or basic cohomology. If for some $i$, the truncated Atiyah-Bredon sequence 
$$H^*(M_i)\to H^*(M_{i+1},M_i)\to \ldots \to H^*(M,\Ms)\to 0$$
is exact, then also
$$0\to H^*(M)\to H^*(M_i)\to H^*(M_{i+1},M_i)\to \ldots \to H^*(M,\Ms)\to 0.$$
\end{prop}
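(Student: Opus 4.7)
The plan is downward induction on $j \in \{i, i+1, \ldots, r-1\}$, where $r = \dim T$, so that $M_r = M$ and $M_{r-1} = \Ms$. The inductive claim is: if the truncated Atiyah-Bredon sequence starting at $H^*(M_j)$ is exact, then the restriction $f_j \colon H^*(M) \to H^*(M_j)$ is injective and $\im f_j = \ker \delta_j$, where $\delta_j \colon H^*(M_j) \to H^{*+1}(M_{j+1}, M_j)$ is the connecting homomorphism of the pair $(M_{j+1}, M_j)$. Throughout, the Atiyah-Bredon differential factors as $d_{j+1} = \delta_{j+1} \circ q_j^*$, where $q_j^* \colon H^*(M_{j+1}, M_j) \to H^*(M_{j+1})$ is the natural map in the pair's long exact sequence. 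The base case $j = r-1$ is immediate: the hypothesis that $H^*(\Ms) \to H^*(M, \Ms) \to 0$ is exact is precisely surjectivity of $\delta_{r-1}$, which by the long exact sequence of $(M, \Ms)$ is equivalent to the claim.

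For the inductive step, the first move is to upgrade exactness from position $H^*(M_j)$ to position $H^*(M_{j+1})$, so that the inductive hypothesis applies at $j+1$. The inclusion $\im \delta_{j+1} \subseteq \ker d_{j+2}$ follows from $q_{j+1}^* \circ \delta_{j+1} = 0$ (a relation in the long exact sequence of $(M_{j+2}, M_{j+1})$), while the reverse inclusion comes from the factorisation $d_{j+1} = \delta_{j+1} \circ q_j^*$, giving $\ker d_{j+2} = \im d_{j+1} \subseteq \im \delta_{j+1}$. The inductive hypothesis then yields injectivity of $f_{j+1}$ together with $\im f_{j+1} = \ker \delta_{j+1}$.

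A diagram chase in the long exact sequence of $(M_{j+1}, M_j)$, with $\iota^* \colon H^*(M_{j+1}) \to H^*(M_j)$ denoting the restriction, finishes the step. For injectivity: if $f_j(\alpha) = 0$, then $f_{j+1}(\alpha) = q_j^*(\beta)$ for some $\beta$; applying $\delta_{j+1}$ gives $d_{j+1}(\beta) = 0$, so by the given exactness $\beta = \delta_j(\gamma)$, whence $f_{j+1}(\alpha) = q_j^*(\delta_j(\gamma)) = 0$ and inductive injectivity forces $\alpha = 0$. For the image equality, given $\eta \in \ker \delta_j$, lift $\eta = \iota^*(\mu)$ via the pair's LES; since $\delta_{j+1}(\mu) \in \ker d_{j+2} = \im d_{j+1}$, write $\delta_{j+1}(\mu) = d_{j+1}(\xi)$, so that $\mu - q_j^*(\xi) \in \ker \delta_{j+1} = \im f_{j+1}$ and hence $\eta = \iota^*(\mu - q_j^*(\xi)) = f_j(\alpha)$ for some $\alpha \in H^*(M)$. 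The only real obstacle is the careful bookkeeping of which piece of exactness is invoked at each stage; the argument applies uniformly to equivariant and basic cohomology, since it uses only the long exact sequences of pairs.
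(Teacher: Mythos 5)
Your proof is correct and takes essentially the same route as the paper's own ``by hand'' argument: an induction along the filtration $M_i\subset\cdots\subset M_r$ that chases the long exact sequences of the pairs $(M_{j+1},M_j)$ against the exactness hypothesis (the paper phrases this as stabilization of $\ker (H^*(M)\to H^*(M_j))$ and of $\im (H^*(M_j)\to H^*(M_i))$, and also notes a spectral-sequence alternative). No gaps worth flagging beyond the trivially omitted inclusion $\im f_j\subset\ker\delta_j$.
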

One way to see this is to regard the maps in the sequence as differentials in the spectral sequence associated to the filtration $M_i\subset M_{i+1}\subset \ldots \subset M_{r-1}\subset M_r$. The assumption implies that this spectral sequence collapses, and because it converges to $H^*(M)$, the statement follows. This argument was used in \cite[p.~846]{Bredon}. It can also be proven by hand: for the injectivity of $H^*(M)\to H^*(M_i)$, a straightforward diagram chase shows  that that for every $j\geq i$, we have $\ker (H^*(M)\to H^*(M_j))=\ker (H^*(M)\to H^*(M_{j+1}))$. For exactness at $H^*(M_i)$, another diagram chase proves that for every $j\geq i+1$, the image of $H^*(M_j)\to H^*(M_i)$ equals the image of $H^*(M_{j+1})\to H^*(M_i)$.

\section{Cohen-Macaulay modules over graded rings}

In the literature, the Cohen-Macaulay property usually is considered for modules over Noetherian (local) rings. In our situation, it is natural to consider the graded version of this concept. 

Using the language of e.g.~\cite[Section 1.5]{BrunsHerzog}, a graded ring $R$ (graded over the integers) is *local if it has a unique *maximal ideal, where a *maximal ideal is a graded ideal $\mfm\neq R$ which is maximal among the graded ideals. Thus, $S(\mft^*)$ is a Noetherian graded *local ring. Note that in general a *maximal ideal is not necessarily maximal.

Let $R$ be a Noetherian graded *local ring, with *maximal ideal $\mfm$. Then the {\it depth} of a finitely generated graded module $A$ over $R$ is defined as the length of a maximal $A$-regular sequence in $\mfm$:
\[
\depth A=\grade(\mfm,A).
\]
The {\it Krull dimension} of $A$, denoted $\dim A$, is defined as the Krull dimension of the ring $R/\Ann(A)$, where $\Ann(A)=\{r\in R\mid rA=0\}$, i.e., the supremum of the lengths of chains of prime ideals in $R$ containing $\Ann(A)$. 

\begin{dfn} A finitely generated graded module $A$ over a Noetherian graded *local ring $R$ is  {\it Cohen-Macaulay} if $\depth A=\dim A$.
\end{dfn}

Instead of working with the graded notions, we could equally well localize everything at the *maximal ideal (as e.g.~Franz and Puppe \cite{FranzPuppe2003} do it in their proof of the exactness of the Atiyah-Bredon sequence for other coefficients), because of the following proposition:
\begin{prop} Let $A$ be a finitely generated graded module over a Noetherian graded *local ring $R$ with *maximal ideal $\mfm$ such that $R/\mfm$ is a field (e.g.~$R=S(\mft^*)$). Then the following conditions are equivalent:
\begin{enumerate}
\item $A$ is Cohen-Macaulay over $R$ 
\item $A_\mfm$ is Cohen-Macaulay over the local ring $R_\mfm$
\item $A_{\mfm'}$ is Cohen-Macaulay over the local ring $R_{\mfm'}$ for all (not necessarily graded) maximal ideals $\mfm'\subset R$.
\end{enumerate}
If these conditions are satisfied, then the Krull dimensions of the $R$-module $A$ and the $R_\mfm$-module $A_\mfm$ coincide.
\end{prop}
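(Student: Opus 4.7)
My plan is to prove the three-way equivalence by establishing $(1)\Leftrightarrow (2)$ and $(2)\Leftrightarrow (3)$ separately, with the final assertion on Krull dimensions emerging in the course of $(1)\Leftrightarrow (2)$. All of the individual ingredients are standard material on graded modules over Noetherian graded *local rings (e.g.~Bruns-Herzog, Section 1.5).

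For $(1)\Leftrightarrow (2)$ and the dimension statement, I would invoke the two foundational identities
\[
\depth_R A = \depth_{R_\mfm} A_\mfm, \qquad \dim_R A = \dim_{R_\mfm} A_\mfm,
\]
valid for any finitely generated graded module $A$ over our Noetherian graded *local ring $R$. Both rest on the fact that the associated primes of $A$, and in particular the minimal primes of $\Ann A$, are all graded and hence contained in $\mfm$. Concretely, a maximal $A$-regular sequence of homogeneous elements in $\mfm$ remains $A_\mfm$-regular and maximal, while chains of (graded) primes computing $\dim_R A$ correspond bijectively to chains of primes in $R_\mfm$ computing $\dim_{R_\mfm} A_\mfm$. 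Consequently $\depth = \dim$ holds on one side if and only if on the other, which is $(1)\Leftrightarrow (2)$, and the dimension equality is then automatic.

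For $(2)\Leftrightarrow (3)$: the hypothesis that $R/\mfm$ is a field forces $\mfm$ to be maximal, so taking $\mfm' = \mfm$ in (3) immediately yields (2). For the converse $(2)\Rightarrow (3)$, I would invoke the standard graded Cohen-Macaulay principle: for a graded module $A$ over a Noetherian graded ring, $A_\mfp$ is Cohen-Macaulay at every prime $\mfp$ if and only if $A_\mfp$ is Cohen-Macaulay at every graded (i.e., *) maximal ideal. In our *local setting the unique such ideal is $\mfm$, so (2) is precisely the hypothesis of this principle and (3) is a consequence.

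The delicate technical input is this last assertion, i.e.\ the equivalence between graded and ordinary Cohen-Macaulayness for graded modules. The key lemma behind it is that for any prime $\mfp\subset R$ the largest graded ideal $\mfp^*\subset \mfp$ is again prime, the quotient $\mfp/\mfp^*$ has height at most one in $R/\mfp^*$, and the resulting extension $R_\mfp/\mfp^* R_\mfp$ is a regular local ring of dimension $\leq 1$; this is enough to transfer the Cohen-Macaulay property from $A_{\mfp^*}$ to $A_\mfp$. I expect assembling these structural facts correctly to be the main obstacle; the rest of the argument is formal.
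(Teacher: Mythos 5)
Your proposal is correct and follows essentially the same route as the paper: reduce $(1)\Leftrightarrow(2)$ to the equalities $\depth A=\depth A_\mfm$ and $\dim A=\dim A_\mfm$, and handle $(2)\Leftrightarrow(3)$ by the graded-versus-local Cohen--Macaulay principle (which the paper simply cites as \cite[Exercise 2.1.27.(c)]{BrunsHerzog}, noting it needs no hypothesis on $R/\mfm$, whereas you sketch its proof). The one place where you are looser than the paper is the dimension equality: since $\dim A$ is a priori the supremum over chains of \emph{arbitrary} primes containing $\Ann A$, one cannot just match graded chains on both sides; the paper writes $\dim A=\sup_{\mfm'}\dim A_{\mfm'}$ and, for a non-graded maximal ideal $\mfm'$, uses $\dim A_{\mfm'}=\dim A_{\mfp}+1$ with $\mfp=(\mfm')^*$ the largest graded prime in $\mfm'$, together with $\mfp\subsetneq\mfm$ (this is exactly where the hypothesis that $R/\mfm$ is a field enters) -- the same height-one lemma you invoke for $(2)\Rightarrow(3)$, so the gap is easily filled from your own toolkit.
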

\begin{proof}
The equivalence of $(2)$ and $(3)$ is \cite[Exercise 2.1.27.(c)]{BrunsHerzog} and valid without the additional assumption that $R/\mfm$ is a field; note that there, Cohen-Macaulay modules (over arbitrary Noetherian rings) are defined via condition $(3)$ of this proposition. 

We only explain the equivalence of $(1)$ and $(2)$. In fact, we show that the depths of $A$ and $A_\mfm$ and the Krull dimensions of $A$ and $A_\mfm$ coincide without using the Cohen-Macaulay property. We have $\depth A=\grade(\mfm,A)=\depth A_\mfm$ by \cite[Prop.~1.5.15.(e)]{BrunsHerzog}. For the equality of the dimensions, note that $\dim A=\sup_{\mfm'} \dim A_{\mfm'}$, where $\mfm'$ varies over all maximal ideals in $R$, so obviously $\dim A_\mfm\leq \dim A$. For the other inequality we use that if $\mfm'\neq \mfm$ is another maximal ideal, then by \cite[Theorem 1.5.8.(b)]{BrunsHerzog} the largest graded prime ideal $\mfp\subset \mfm'$ satisfies $\dim A_{\mfm'}=\dim A_\mfp +1$. Because of the assumption that $R/\mfm$ is a field, $\mfp$ is properly contained in the *maximal ideal $\mfm$, so $\dim  A_\mfp<\dim A_\mfm$, and hence $\dim A_{\mfm'}\leq \dim A_\mfm$.
\end{proof}
The proof implies that for a finitely generated module over an arbitrary Noetherian graded *local ring $R$, the inequality $\depth A_\mfm\leq \dim A_\mfm$ for the localized module $A_\mfm$ over $R_\mfm$ translates to the corresponding one for $A$: if $A\neq 0$, then
\[
\depth A\leq \dim A.
\]

For later use, we collect some well-known lemmata on Cohen-Macaulay modules. The first two, which describe how depth and the Cohen-Macaulay property behave with respect to short exact sequences, will be crucial for all our proofs that the equivariant cohomology of actions with certain properties is Cohen-Macaulay.
\begin{lem}[{\cite[Proposition 1.2.9]{BrunsHerzog}}] \label{lem:depthses} Let $0\to A\to B\to C\to 0$ be an exact sequence of finitely generated graded modules over a Noetherian graded *local ring $R$. Then
\begin{enumerate}
\item $\depth A\geq \min \{\depth B,\depth C+1\}$
\item $\depth B\geq \min\{\depth A,\depth C\}$
\item $\depth C\geq \min \{\depth A-1,\depth B\}$
\end{enumerate}
\end{lem}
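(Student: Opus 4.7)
My plan is to deduce all three inequalities from the standard Ext-characterization of depth, which in the graded setting reads
\[
\depth A \;=\; \min\{\,i\geq 0 \mid \Ext^i_R(R/\mfm, A)\neq 0\,\}
\]
for a nonzero finitely generated graded module $A$ over the Noetherian graded *local ring $R$ (with the convention $\depth 0=\infty$). This characterization is the graded analogue of \cite[Theorem 1.2.5]{BrunsHerzog}; alternatively, one can invoke the preceding proposition to reduce the assertion to the localized modules $A_\mfm, B_\mfm, C_\mfm$ over the honest local ring $R_\mfm$, where the inequalities are exactly Proposition 1.2.9 in Bruns--Herzog. Either way, the substantive input is just the long exact sequence of Ext.

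Once one has the Ext characterization, all three inequalities fall out of the long exact sequence
\[
\cdots \to \Ext^{i-1}(R/\mfm, C)\to \Ext^i(R/\mfm, A)\to \Ext^i(R/\mfm, B)\to \Ext^i(R/\mfm, C)\to \Ext^{i+1}(R/\mfm, A)\to \cdots
\]
obtained by applying $\Hom_R(R/\mfm, -)$ to the short exact sequence. For (1), if $i<\min\{\depth B, \depth C+1\}$ then both $\Ext^{i-1}(R/\mfm, C)$ and $\Ext^i(R/\mfm, B)$ vanish, forcing $\Ext^i(R/\mfm, A)=0$; hence $\depth A\geq \min\{\depth B, \depth C+1\}$. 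Statement (2) follows analogously by sandwiching $\Ext^i(R/\mfm, B)$ between $\Ext^i(R/\mfm, A)$ and $\Ext^i(R/\mfm, C)$, and (3) by sandwiching $\Ext^i(R/\mfm, C)$ between $\Ext^i(R/\mfm, B)$ and $\Ext^{i+1}(R/\mfm, A)$.

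The only genuine obstacle is bookkeeping the graded setup: one must check that the depth-Ext formula remains valid for graded modules over a graded *local ring, and that short exact sequences of graded modules yield long exact sequences of graded Ext groups. Both facts are standard (cf.\ \cite[Section 1.5]{BrunsHerzog}), but to avoid reproving them I would prefer the reduction route: pass to $A_\mfm, B_\mfm, C_\mfm$ over $R_\mfm$ (using that localization is exact and that depth and the *maximal ideal behave well under it, as shown in the proof of the preceding proposition), apply the classical Bruns--Herzog result there, and then transport the inequalities back to the graded modules via the equality $\depth A=\depth A_\mfm$ established above.
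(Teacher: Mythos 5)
Your proof is correct. The paper itself gives no argument for this lemma---it simply cites \cite[Proposition 1.2.9]{BrunsHerzog}---and your derivation via the $\Ext$-characterization of depth and the long exact sequence (or, equivalently, the reduction to $A_\mfm,B_\mfm,C_\mfm$ over $R_\mfm$ using $\depth A=\grade(\mfm,A)=\depth A_\mfm$, which the paper establishes in the preceding proposition) is exactly the standard proof of the cited result.
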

\begin{lem}\label{lem:CMseq}
Let $0\to A\to B\to C\to 0$ be an exact sequence of finitely generated graded modules over a Noetherian graded *local ring $R$. Then the following statements are true:
\begin{enumerate}
\item  If $A$ and $C$ are Cohen-Macaulay of the same Krull dimension $n$, then $B$ is also Cohen-Macaulay of Krull dimension $n$.
\item If $B$ and $C$ are Cohen-Macaulay of the same Krull dimension $n$, then either $A=0$ or $A$ is also Cohen-Macaulay of Krull dimension $n$.
\end{enumerate}
\end{lem}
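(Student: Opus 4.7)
The plan is to combine the depth inequalities of Lemma \ref{lem:depthses} with a standard fact about Krull dimension in short exact sequences, namely that for a short exact sequence $0\to A\to B\to C\to 0$ of finitely generated $R$-modules,
\[
\dim B = \max\{\dim A,\dim C\}.
\]
This follows because the support of a finitely generated graded module is $V(\Ann(-))$, these supports satisfy $\supp B=\supp A\cup \supp C$, and Krull dimension of the module equals the dimension of its support. I would quote this from \cite[Section 1.2]{BrunsHerzog} (or note it as a direct consequence of the definition of $\dim$ together with the behavior of annihilators in short exact sequences), together with the inequality $\depth M\leq \dim M$ for any nonzero finitely generated module $M$, which was established just before the statement of the lemma.

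For part (1), I would first use the dimension identity: since $\dim A=\dim C=n$, we get $\dim B=n$. Then Lemma \ref{lem:depthses}(2) gives
\[
\depth B\geq \min\{\depth A,\depth C\}=\min\{n,n\}=n,
\]
while the general inequality $\depth B\leq \dim B=n$ forces $\depth B=n$. Hence $B$ is Cohen-Macaulay of Krull dimension $n$.

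For part (2), assume $A\neq 0$. From $\dim B=\max\{\dim A,\dim C\}=n$ and $\dim C=n$ we conclude $\dim A\leq n$. Using $\depth B=\depth C=n$, Lemma \ref{lem:depthses}(1) yields
\[
\depth A\geq \min\{\depth B,\depth C+1\}=\min\{n,n+1\}=n,
\]
and combining this with $\depth A\leq \dim A\leq n$ (where we use $A\neq 0$) we obtain $\depth A=\dim A=n$. Thus $A$ is Cohen-Macaulay of Krull dimension $n$.

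The only nonroutine ingredient is the identity $\dim B=\max\{\dim A,\dim C\}$; once it is in hand, both parts are immediate from Lemma \ref{lem:depthses} and the depth--dimension inequality. I do not anticipate any real obstacle, although one must be careful in (2) to treat the case $A=0$ separately, since then $\depth A$ is usually taken to be $+\infty$ while $\dim A=-\infty$, which is exactly the reason the statement of the lemma allows this exception.
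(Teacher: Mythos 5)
Your proposal is correct and follows essentially the same route as the paper: the identity $\dim B=\max\{\dim A,\dim C\}$ (which the paper justifies via $\dim B=\sup_{\mathfrak p}\dim R/\mathfrak p$ over the support and Proposition I.4 of Serre's \emph{Local algebra}), combined with Lemma \ref{lem:depthses} and the inequality $\depth\leq\dim$. No gaps.
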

\begin{proof}
We have $\dim B=\sup_{\mfp} \dim R/\mfp$, where the supremum is taken over those prime ideals $\mfp$ with $B_\mfp\neq 0$, see \cite[Ch.~III, B.1]{Serre}. Thus, Proposition I.4 of \cite{Serre} implies that $\dim B=\max\{\dim A,\dim C\}$ for any short exact sequence of finitely generated $R$-modules. To see how the depths of the modules relate, use Lemma \ref{lem:depthses}: in case $(1)$, it implies that $n\leq \depth B\leq \dim B = n$, since $\depth B\leq \dim B$ in any case. In case $(2)$ and $A\neq 0$, the lemma implies that $n\leq \depth A\leq \dim A\leq n$.
\end{proof}
\begin{rem} If $A$ and $B$ are Cohen-Macaulay of the same Krull dimension $n$ and $C\neq 0$, then $C$ is not necessarily Cohen-Macaulay of Krull dimension $n$. For example, consider the short exact sequence $0\to \RR[t]\overset{\cdot t}{\to}\RR[t] \to \RR\to 0$.
\end{rem}

\begin{ex}\label{ex:MiMi-1CM}
If a $T$-action on a compact manifold $M$ and $i$ is such that $M_{i-1}\neq M_i$, then $H^*_T(M_i,M_{i-1})$ is a Cohen-Macaulay module of Krull dimension $r-i$. In fact, if we choose points $p_j$ such that $M_{(i)}$ is the disjoint union of the $\Mr^{p_j}$, then
\[
H^*_T(M_i,M_{i-1})=H^*_{T,c}(M_{(i)})=\bigoplus_j H^*_{T,c}(\Mr^{p_j}) = \bigoplus_j H^*_{bas,c}(\Mr^{p_j})\otimes S(\mft_j^*),
\]
where $\mft_j$ is the unique isotropy algebra of $\Mr^{p_j}$. Thus, $H^*_T(M_i,M_{i-1})$ is the sum of Cohen-Macaulay modules of Krull dimension $r-i$, and Lemma \ref{lem:CMseq} implies that it is Cohen-Macaulay of Krull dimension $r-i$ itself.
\end{ex}

\begin{lem}[{\cite[Lemma 4.3]{FranzPuppe2003}}]\label{lem:submodulesCM}
Let $A$ be a finitely generated module over a Noetherian graded *local ring $R$. If $A$ is Cohen-Macaulay and $B\subset A$ a non-zero submodule, then $\dim B=\dim A$.
\end{lem}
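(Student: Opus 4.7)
The plan is to derive the statement from the standard \emph{unmixedness} of Cohen--Macaulay modules: if $A$ is a non-zero finitely generated Cohen--Macaulay module over a Noetherian (graded) *local ring $R$, then every $\mathfrak{p}\in\Ass(A)$ satisfies $\dim R/\mathfrak{p}=\dim A$. This is \cite[Theorem 2.1.2(a)]{BrunsHerzog} in the local case; the graded *local version follows either by direct adaptation or, via the preceding proposition of this section, by localization at the *maximal ideal, since both the Cohen--Macaulay property and the Krull dimension are preserved under that localization (and graded associated primes correspond to associated primes of the localization).

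Granted this, the argument is short. Recall that for any non-zero finitely generated graded $R$-module $N$ the set $\Ass(N)$ is non-empty; moreover, since the minimal primes of $\Ann(N)$ are associated to $N$ and since every $\mathfrak{p}\in\Ass(N)$ contains $\Ann(N)$, one has
\[
\dim N \;=\; \dim R/\Ann(N) \;=\; \max_{\mathfrak{p}\in\Ass(N)} \dim R/\mathfrak{p}.
\]
Applying this to the non-zero submodule $B\subset A$ and using the basic containment $\Ass(B)\subset\Ass(A)$, every $\mathfrak{p}\in\Ass(B)$ satisfies $\dim R/\mathfrak{p}=\dim A$ by unmixedness of $A$; picking any such $\mathfrak{p}$ gives $\dim B\geq \dim A$. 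The reverse inequality $\dim B\leq \dim A$ is immediate from $B\subset A$, which forces $\Ann(A)\subset\Ann(B)$ and hence exhibits $R/\Ann(B)$ as a quotient of $R/\Ann(A)$.

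The only non-formal ingredient is the unmixedness theorem; the main obstacle is merely to make sure the graded *local version is applicable, which is handled by the proposition comparing the graded picture with its localization at the *maximal ideal. Everything else is standard Noetherian bookkeeping, so I expect the full written proof to be no more than a few lines.
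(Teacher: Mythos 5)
Your proof is correct and follows essentially the same route as the paper: both arguments pick an associated prime $\mathfrak{p}\in\Ass(B)\subset\Ass(A)$, note that $\dim B\geq\dim R/\mathfrak{p}$, and conclude via the fact that $\dim R/\mathfrak{p}\geq\depth A=\dim A$ for associated primes of a Cohen--Macaulay module (the paper cites \cite[Proposition 1.2.13]{BrunsHerzog} where you cite the packaged unmixedness statement, Theorem 2.1.2(a) of the same book, and both treat the graded adaptation at the same level of detail).
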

\begin{proof} We only need to show $\dim B\geq \dim A$. Let $\mfp\in \Ass(B)$ be an associated prime ideal of $B$, i.e., $\mfp$ is the annihilator of some element in $B$. Because $B_\mfp\neq 0$, we have $\dim B\geq \dim R/\mfp$. Furthermore, Proposition 1.2.13.~of \cite{BrunsHerzog} is true in the graded setting, and hence $\dim R/\mfp \geq \depth A=\dim A$.
\end{proof}

\section{Cohen-Macaulay actions}\label{sec:CMactions}
In this section, we introduce our main object of study. Examining the proof that equivariant formality implies the exactness of the Atiyah-Bredon sequence \cite{Bredon} (see also \cite{FranzPuppe2003}), one sees that the relevant property of $H^*_T(M)$ is not that it is a free $S(\mft^*)$-module, but that it is a Cohen-Macaulay module of Krull dimension $r=\dim T$.

It is proven in \cite[Proposition 5.1]{FranzPuppe2003} that for any $T$-action on $M$ the Krull dimension of the $S(\mft^*)$-module $H^*_T(M)$ equals the dimension of a maximal isotropy algebra (i.e., the Lie algebra of an isotropy group).

\begin{dfn}
Let $T$ act on a compact manifold $M$. We say that the action is {\it Cohen-Macaulay} if $H^*_T(M)$ is a Cohen-Macaulay module over $S(\mft^*)$.
\end{dfn}
If $b$ denotes the lowest occuring dimension of a $T$-orbit, i.e., $M_b\neq \emptyset$ but $M_{b-1}=\emptyset$, then the dimension of a maximal isotropy algebra is $\dim T-b=r-b$. 
With this definition, Theorem \ref{thm:atiyahbredonsequence} is still valid in the sense of the following Theorem. 
\begin{thm} \label{thm:genABsequence} Let $T$ act on a compact manifold $M$, and denote by $b$ the lowest occuring dimension of a $T$-orbit. Then the following conditions are equivalent:
\begin{enumerate}
\item  The action is Cohen-Macaulay
\item The sequence
\[
0\to H^*_T(M)\to H^*_T(M_b)\to H^*_T(M_{b+1},M_b)\to \ldots \to H^*_T(M,\Ms)\to 0
\]
(which we will refer to as the Atiyah-Bredon sequence) is exact.
\end{enumerate}
\end{thm}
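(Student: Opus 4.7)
The equivalence splits into an easy direction and a harder direction, which I would handle separately. The common ingredients are: the Krull-dimension computation $\dim H^*_T(M)=r-b$ from \cite[Proposition~5.1]{FranzPuppe2003} recalled at the start of this section; Example~\ref{ex:MiMi-1CM}, which says each $H^*_T(M_i,M_{i-1})$ is Cohen--Macaulay of dimension $r-i$; the depth/dimension behaviour under short exact sequences from Lemmas~\ref{lem:depthses} and~\ref{lem:submodulesCM}; and Proposition~\ref{prop:vorneexakt}, which lets us ignore the first two terms of the sequence.

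For the direction $(2)\Rightarrow(1)$, I would split the assumed exact sequence into short exact sequences $0\to Z^p\to H^*_T(M_p,M_{p-1})\to Z^{p+1}\to 0$, $b\le p\le r$, with $Z^b=H^*_T(M)$ and $Z^{r+1}=0$. Applying Lemma~\ref{lem:depthses}(1) inductively from right to left, using $\depth H^*_T(M_p,M_{p-1})=r-p$, yields $\depth Z^p\ge r-p$; in particular $\depth H^*_T(M)\ge r-b=\dim H^*_T(M)$. Since $\depth\le\dim$ always holds, $H^*_T(M)$ is Cohen--Macaulay of the asserted dimension.

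For the harder direction $(1)\Rightarrow(2)$, by Proposition~\ref{prop:vorneexakt} it is enough to prove exactness of the truncated sequence
\[
H^*_T(M_b)\to H^*_T(M_{b+1},M_b)\to\cdots\to H^*_T(M,\Ms)\to 0.
\]
I would consider the spectral sequence of the filtration $M_b\subset M_{b+1}\subset\cdots\subset M$ with $E_1^{p,q}=H^{p+q}_T(M_p,M_{p-1})$ and $d_1$ equal to the Atiyah--Bredon map, abutting to $H^*_T(M)$ with decreasing filtration $F^p=\ker(H^*_T(M)\to H^*_T(M_{p-1}))$. The target is to show that the spectral sequence collapses at $E_2$ with $E_2^{p,*}=0$ for $p>b$. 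A first dimension argument already gives $E_\infty^{p,*}=0$ for $p>b$: the term $E_\infty^{p,*}$ is a subquotient of the Cohen--Macaulay column $E_1^{p,*}$ of dimension $r-p$, while by Lemma~\ref{lem:submodulesCM} every nonzero submodule of the Cohen--Macaulay module $H^*_T(M)$ has dimension $r-b$. Downward induction from $F^{r+1}=0$ then forces $F^{p+1}=0$ for all $p\ge b$, so that the restriction $H^*_T(M)\to H^*_T(M_b)$ is injective.

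The main obstacle is to upgrade the vanishing of $E_\infty^{p,*}$ to the vanishing of $E_2^{p,*}$ for $p>b$, i.e.\ to rule out contributions of the higher differentials $d_r$, $r\ge 2$. I expect to close this gap either by iterating the depth/dimension bookkeeping on each successive page of the spectral sequence, using that every $E_r^{p,*}$ remains a subquotient of the Cohen--Macaulay module $E_1^{p,*}$ of dimension $r-p$, so that differentials between columns of strictly different Krull dimensions are forced to cancel exactly by Lemma~\ref{lem:submodulesCM}, or by adapting the Bredon--Franz--Puppe argument from the equivariantly formal case with Cohen--Macaulayness of $H^*_T(M)$ playing the role formerly served by freeness. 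A clean alternative would be to localize at the $*$-maximal ideal of $S(\mft^*)$ and reinterpret the claim homologically via the characterization of Cohen--Macaulayness over the regular ring $S(\mft^*)$ by the vanishing of appropriate $\Ext$-groups, which turns the asserted exactness into a resolution statement for the complex of Cohen--Macaulay modules $H^*_T(M_\bullet,M_{\bullet-1})$.
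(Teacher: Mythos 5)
Your direction $(2)\Rightarrow(1)$ is correct and is essentially the paper's argument: splitting the exact sequence into short exact sequences whose middle terms are the Cohen--Macaulay modules $H^*_T(M_p,M_{p-1})$ of depth $r-p$ and propagating depth from right to left via Lemma~\ref{lem:depthses} is exactly what the paper does (the paper phrases your syzygies $Z^{p+1}$ as the relative modules $H^*_T(M,M_p)$ via \cite[Lemma 4.1]{FranzPuppe}, but for this direction that is cosmetic).

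The direction $(1)\Rightarrow(2)$, however, contains a genuine gap, which you acknowledge yourself: you establish only $E_\infty^{p,*}=0$ for $p>b$, i.e.\ injectivity of $H^*_T(M)\to H^*_T(M_b)$, whereas exactness at the interior terms requires $E_2^{p,*}=0$, and none of the three strategies you list for this is carried out. The first one, iterating the dimension bookkeeping on later pages, does not work as stated: Lemma~\ref{lem:submodulesCM} controls \emph{submodules} of Cohen--Macaulay modules, but $E_s^{p,*}$ for $s\ge 2$ is only a \emph{subquotient} of $E_1^{p,*}$, and subquotients of Cohen--Macaulay modules need not be Cohen--Macaulay nor admit any lower bound on their Krull dimension (cf.\ the remark after Lemma~\ref{lem:CMseq}), so the upper bound $\dim E_s^{p,*}\le r-p$ alone cannot force a differential into column $p$ to vanish. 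The ingredient you are missing is the localization-theorem estimate $\dim H^*_T(M,M_j)\le r-j-1$ from \cite[Lemma 4.4]{FranzPuppe2003}, displayed as \eqref{eq:dimMMj} in the paper, combined with the equivalence of exactness of the whole sequence with exactness of the short sequences $0\to H^*_T(M,M_{j-1})\to H^*_T(M_j,M_{j-1})\to H^*_T(M,M_j)\to 0$ from \cite[Lemma 4.1]{FranzPuppe}. With these, the paper argues by induction: assuming $H^*_T(M,M_{j-1})$ is Cohen--Macaulay of dimension $r-j$, the image of $H^*_T(M,M_j)\to H^*_T(M,M_{j-1})$ is a submodule of a Cohen--Macaulay module of dimension $r-j$ which is simultaneously a quotient of a module of dimension at most $r-j-1$, hence zero by Lemma~\ref{lem:submodulesCM}; this yields the short exact sequence, and the same depth bookkeeping as in the other direction then shows that $H^*_T(M,M_j)$ is Cohen--Macaulay of dimension $r-j-1$, closing the induction. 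This upper bound on the dimension of the relative modules is a genuinely geometric input that substitutes for freeness in the equivariantly formal case, and your proposal does not supply it or anything equivalent.
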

\begin{proof} For the proof of $(1)\Rightarrow (2)$, we follow \cite{FranzPuppe2003} closely. The proof of $(2)\Rightarrow (1)$ naturally reverses the arguments of $(1)\Rightarrow (2)$.

Without loss of generality, we can assume that the action is effective, i.e., $M=M_r$ and $\Ms=M_{r-1}$. For both directions we will use the following consequence of the localization theorem, see \cite[Lemma 4.4]{FranzPuppe2003},
\begin{equation} \label{eq:dimMMj}
\dim H^*_T(M,M_j)\leq r-j-1
\end{equation}
and the fact that exactness of the Atiyah-Bredon sequence is equivalent to the exactness of the sequences
\begin{equation}\label{eqn:atiyahbredonshort}
0\to H^*_T(M,M_{j-1})\to H^*_T(M_{j},M_{j-1})\to H^*_T(M,M_{j})\to 0
\end{equation}
for $j\geq b$, see \cite[Lemma 4.1]{FranzPuppe}.

Assuming the action is Cohen-Macaulay, we prove by induction that \eqref{eqn:atiyahbredonshort} is exact and that $H^*_T(M,M_j)$ is Cohen-Macaulay of Krull dimension $r-j-1$. Assume we have shown that $H^*_T(M,M_{j-1})$ is Cohen-Macaulay of Krull dimension $r-j$. For the exactness of \eqref{eqn:atiyahbredonshort}, we show that $H^*_T(M,M_j)\to H^*_T(M,M_{j-1})$ is the zero map. Since $H^*_T(M,M_{j-1})$ is Cohen-Macaulay of Krull dimension $r-j$, the image of $H^*_T(M,M_j)$ under the map in question is, by Lemma \ref{lem:submodulesCM}, either zero or has Krull dimension $r-j$ as well. But on the other hand, \eqref{eq:dimMMj} implies that its Krull dimension is at most $r-j-1$, hence the image vanishes and \eqref{eqn:atiyahbredonshort} is exact. Because $\depth H^*_T(M_j,M_{j-1})=r-j$ by Example \ref{ex:MiMi-1CM}, Lemma \ref{lem:depthses} implies that $\depth H^*_T(M,M_j)\geq r-j-1$. Noting that $H^*_T(M,M_j)\neq 0$, this shows together with \eqref{eq:dimMMj} that $H^*_T(M,M_j)$ is Cohen-Macaulay of Krull dimension $r-j-1$. 

For the other direction, assume that the sequences \eqref{eqn:atiyahbredonshort} are exact. By Example \ref{ex:MiMi-1CM}, $H^*_T(M,M_{r-1})$ is a Cohen-Macaulay module of Krull dimension $0$.  Using $\depth H^*_T(M_j,M_{j-1})=r-j$, Lemma \ref{lem:depthses} implies that if $H^*_T(M,M_j)$ is a Cohen-Macaulay module of Krull dimension $r-j-1$, then $\depth H^*_T(M,M_{j-1})\geq r-j$ and hence \eqref{eq:dimMMj} implies that $H^*_T(M,M_j)$ is Cohen-Macaulay of Krull dimension $r-j$. By induction it follows that $H^*_T(M)$ is Cohen-Macaulay of Krull dimension $r-b$.
\end{proof}

\begin{rem} \label{rem:reductiontoequivformal} If the lowest occuring dimension of a $T$-orbit is $b$, then a generic $b$-dimensional subtorus $K\subset T$ acts locally freely. Choosing another subtorus $K'$ such that $\mfk\oplus \mfk'=\mft$, we have that $K'$ acts on $M/K$ with fixed points, and $H^*_T(M)\cong H^*_{K'}(M/K)$ as graded rings by the commuting action principle \cite[Section 4.6]{GuilleminSternberg}. Note that $M/K$ is not necessarily a manifold. The $T$-action on $M$ is Cohen-Macaulay if and only if the $K'$-action on $M/K$ is equivariantly formal. Note also that the $T$-action on $M$ and the $K'$-action on $M/K$ have the same orbit space. With this reduction one can alternatively deduce the theorem above from Theorem \ref{thm:atiyahbredonsequence} (respectively a version for more general spaces than differentiable manifolds).  Modulo the fact that $M/K$ is not a manifold, also the results in Section \ref{sec:MorseBott} and \ref{sec:diskbundle} (but not \ref{sec:bottomstratum} and \ref{sec:ActionsWithAcyclicOrbitSpace}) could be derived similarly. We believe however that is is illuminating to see that the existing proofs can easily be modified from the equivariantly formal to the Cohen-Macaulay setting.
\end{rem}

In the case $b=0$, i.e., $M_b=M_0=M^\mft$, the theorem reduces to Theorem \ref{thm:atiyahbredonsequence} by Atiyah, Bredon, Franz and Puppe because of
\begin{prop} If the $T$-action has fixed points, then it is Cohen-Macaulay if and only if it is equivariantly formal.
\end{prop}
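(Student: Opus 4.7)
The plan is as follows. One direction is essentially formal: if the action is equivariantly formal, then $H^*_T(M)$ is a free $S(\mft^*)$-module by characterization (1) in Section \ref{sec:EquivariantFormality}, and a non-zero free module over the polynomial ring $S(\mft^*)$ is automatically Cohen-Macaulay of Krull dimension $r = \dim T$.

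For the converse, suppose the action has fixed points and is Cohen-Macaulay. I would begin by observing that having fixed points forces $b = 0$, so by \cite[Proposition 5.1]{FranzPuppe2003} (quoted at the start of Section \ref{sec:CMactions}) the Krull dimension of $H^*_T(M)$ equals $r - b = r$. The Cohen-Macaulay hypothesis therefore gives $\depth_{S(\mft^*)} H^*_T(M) = r$.

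Next, I would invoke the graded Auslander-Buchsbaum formula. Since $M$ is compact, $H^*_T(M)$ is finitely generated over $S(\mft^*)$; since $S(\mft^*)$ is a polynomial ring, Hilbert's syzygy theorem ensures that $H^*_T(M)$ has finite projective dimension. The (graded) Auslander-Buchsbaum formula then yields
\[
\mathrm{pd}_{S(\mft^*)} H^*_T(M) \;=\; \depth S(\mft^*) - \depth H^*_T(M) \;=\; r - r \;=\; 0.
\]
Hence $H^*_T(M)$ is projective, and a finitely generated graded projective module over the graded polynomial ring $S(\mft^*)$ is free. This means the action is equivariantly formal.

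I do not expect a serious obstacle here; the proof is essentially a direct application of standard commutative-algebra facts (Hilbert syzygy, Auslander-Buchsbaum in the graded *local setting) to the Cohen-Macaulay hypothesis. The only bookkeeping point is making sure the graded versions of these results apply with respect to the *maximal ideal of $S(\mft^*)$, which is covered by the references to \cite{BrunsHerzog} already in use in Section 5.
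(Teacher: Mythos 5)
Your proposal is correct and follows essentially the same route as the paper, which also deduces the result from the graded Auslander--Buchsbaum formula together with the fact that a finitely generated graded module of projective dimension zero over $S(\mft^*)$ is free; you have merely spelled out the details (in particular the use of the fixed-point hypothesis to pin the Krull dimension at $r$ via \cite[Proposition 5.1]{FranzPuppe2003}) that the paper leaves implicit.
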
 
\begin{proof} This follows from the graded version of the Auslander-Buchs\-baum Theorem \cite[Exercise 19.8]{Eisenbud} and the fact that for a graded module over a polynomial ring, the projective dimension is equal to the length of the minimal free resolution \cite[Corollary 1.8]{Syzygies}.
\end{proof}
Note that in the proof of $(2)\Rightarrow (1)$ in Theorem \ref{thm:atiyahbredonsequence} by Franz and Puppe \cite{FranzPuppe} this argument is not needed, as they directly show freeness of $H^*_T(M)$.

\begin{exs}\label{ex:CM}
\begin{enumerate} \item Let $T$ act on $M$, and denote by $K$ the connected component of the kernel of the action, i.e., of the subgroup of $T$ consisting of the elements that act trivially. Then the $T$-action on $M$ is Cohen-Macaulay if and only if the $T/K$-action on $M$ is Cohen-Macaulay.
\item $T$-actions with only one local isotropy type are Cohen-Macaulay. The Atiyah-Bredon sequence in this case is $0\to H^*_T(M)\to H^*_T(M)\to 0$.
\item Let $T$ act on $M$ effectively and with cohomogeneity one, i.e., the regular orbits have codimension one. There are at most two singular orbits, all of which have codimension two (recall that for us only the local isotropy type is relevant). If the action is locally free, then the action is Cohen-Macaulay by the previous example, so we can assume there exists at least one singular orbit. In this case, the singular orbits have dimension $r-1$ because the regular orbits are $S^1$-fibre bundles over the singular orbits, i.e., $b=r-1$. To see that the Atiyah-Bredon sequence in this case is exact, we need to show that $H^*_T(M,M_{r-1})\to H^*_T(M)$ is the zero map. It is known that the orbit space is homeomorphic to the closed interval $[-1,1]$, and the orbit space of the regular stratum is either the open interval $(-1,1)$ or a half-open interval $[-1,1)$, where the latter case can only occur in the non-orientable case. In any case, $H^k_T(M,M_{r-1})=H^k_c(\Mr/T)=0$ for $k\neq 1$. But $H^1_T(M)=H^1(M/T)=H^1([-1,1])=0$ by \cite[Example C.8]{GGK}, which
shows that the map in question is the zero map. Thus, cohomogeneity-one actions are Cohen-Macaulay.
\end{enumerate}
\end{exs}

The following proposition is a generalization of Corollary \ref{cor:bottomstrofequivformal}.
\begin{prop} \label{prop:M_bbottom}
If the action is Cohen-Macaulay, then the bottom stratum of the action equals $M_b$. More generally, this is true if $H^*_T(M)\to H^*_T(M_b)$ is injective.
\end{prop}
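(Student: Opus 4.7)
My plan is to prove the two inclusions separately; only $B \subseteq M_b$ uses the hypothesis. For $M_b \subseteq B$: if $p \in M_b$ then $\dim \mft_p = r - b$ is maximal, so for every $q \in M^p$ the containment $\mft_p \subseteq \mft_q$ is forced to be equality, whence $M^p = \Mr^p$ and $p \in B$.

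For the converse I argue by contradiction. Suppose $p \in B$ with $\mfk := \mft_p$ of dimension strictly less than $r - b$, and set $N := M^p$, a closed $T$-invariant submanifold of $M$. Because $p \in B$ the isotropy algebra is constantly $\mfk$ along $N$, so every orbit in $N$ has dimension $r - \dim \mfk > b$ and therefore $N \cap M_b = \emptyset$. Since $M_b$ is closed, I can select a $T$-invariant tubular neighborhood $U$ of $N$ disjoint from $M_b$. Moreover, the normal bundle $\nu_N$ is canonically $T$-equivariantly oriented: the $K$-action (where $K$ denotes the subtorus with Lie algebra $\mfk$) decomposes $\nu_N$ into weight subbundles indexed by nonzero $\beta \in \mfk^*$ --- a weight-zero direction would extend $M^\mfk$ --- and each nonzero weight subbundle carries a canonical complex structure.

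I then build a nonzero class in $H^*_T(M)$ vanishing on $M_b$, contradicting injectivity. Let $[N] \in H^*_T(M)$ be the equivariant Poincar\'e dual of $N$, i.e., the image of $1$ under the composition $H^*_T(N) \to H^{*+c}_{T,c}(U) \to H^{*+c}_T(M)$, where the first map is the equivariant Thom isomorphism (with $c$ the codimension of $N$ in $M$) and the second is extension by zero. Then $[N]$ lies in the image of $H^*_T(M, M \setminus N) \to H^*_T(M)$, so it restricts to zero on any subset of $M \setminus N$; in particular $[N]|_{M_b} = 0$. On the other hand $[N]|_N = e_T(\nu_N)$, and writing $H^*_T(N) = S(\mfk^*) \otimes H^*(N/T)$ via Lemma~\ref{lem:trivialpartofaction} (valid because any complement $K' \subset T$ of $K$ acts locally freely on $N$), the highest $S(\mfk^*)$-degree component of $e_T(\nu_N)$ equals the product of the nonzero $K$-weights on $\nu_N$, which is nonzero since $S(\mfk^*)$ is an integral domain. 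Hence $[N] \ne 0$, contradicting injectivity of $H^*_T(M) \to H^*_T(M_b)$ --- which in the Cohen-Macaulay case is furnished by Theorem~\ref{thm:genABsequence}.

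The main obstacle is the nonvanishing of $e_T(\nu_N)$; the other ingredients (orientability, the Thom isomorphism, extension by zero) are standard. The Euler-class step depends crucially on the hypothesis $p \in B$, which both guarantees that the $K$-weights on $\nu_N$ are all nonzero and allows us to identify $H^*_T(N)$ with $S(\mfk^*) \otimes H^*(N/T)$, an integral-domain factor in which the product of weights does not vanish.
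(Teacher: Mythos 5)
Your proposal is correct and follows essentially the same route as the paper: take a component $N$ of the bottom stratum disjoint from $M_b$, observe that its equivariant Thom class restricts to zero on $M_b$ but is nonzero because its restriction to $N$ is the equivariant Euler class of the normal bundle. The only difference is that you prove the nonvanishing of that Euler class directly via the weight decomposition, where the paper simply cites Duflot's Proposition 3 for this fact.
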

\begin{proof}
Obviously $M_b$ is contained in the bottom stratum. If there is a component $N$ of the bottom stratum not contained in $M_b$, then $N\cap M_b=\emptyset$. Then, the Thom class of $N$ (with respect to any chosen orientation on the normal bundle) is a nonzero class in $H^*_T(M)$ (because it restricts to the Euler class of $N$ which is nonzero by \cite[Proposition 3]{Duflot}) that restricts to zero on $H^*_T(M_b)$.
\end{proof}

\begin{prop} \label{prop:kernelspaltet} If the action is Cohen-Macaulay, then for every $T$-invariant closed subspace $N$ containing $M_{b+1}$, we have $H^*_T(N)\cong H^*_T(M)\oplus (\ker H^*_T(N)\to H^*_T(M_b))$.
\end{prop}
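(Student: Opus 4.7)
The plan is to realize the asserted decomposition by splitting $H^*_T(N)$ via the restriction map to $M_b$, using the Atiyah-Bredon sequence of Theorem \ref{thm:genABsequence} as the main input.

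First I would observe that the composition
\[
H^*_T(M)\to H^*_T(N)\to H^*_T(M_b)
\]
is the restriction from $M$ to $M_b$, and by the Cohen-Macaulay assumption together with Theorem \ref{thm:genABsequence}, this composition is injective. Consequently the map $H^*_T(M)\to H^*_T(N)$ is itself injective, and its image meets $K:=\ker(H^*_T(N)\to H^*_T(M_b))$ trivially. So we already have an internal direct sum $\mathrm{im}(H^*_T(M)\to H^*_T(N))\oplus K\hookrightarrow H^*_T(N)$.

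The main (and really only) substantive step is to show that this inclusion is surjective, which reduces to the claim
\[
\mathrm{im}\bigl(H^*_T(N)\to H^*_T(M_b)\bigr)=\mathrm{im}\bigl(H^*_T(M)\to H^*_T(M_b)\bigr).
\]
The inclusion $\supset$ is obvious since restriction factors through $N$. For $\subset$, take $\alpha\in H^*_T(N)$. Because $N\supset M_{b+1}\supset M_b$, we can first restrict to $M_{b+1}$ and then to $M_b$, so $\alpha|_{M_b}$ lies in the image of $H^*_T(M_{b+1})\to H^*_T(M_b)$. By exactness of the long exact sequence of the pair $(M_{b+1},M_b)$, that image is precisely the kernel of the connecting map $\partial\colon H^*_T(M_b)\to H^{*+1}_T(M_{b+1},M_b)$. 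But by the exactness of the Atiyah-Bredon sequence (Theorem \ref{thm:genABsequence}), $\ker\partial$ coincides with $\mathrm{im}(H^*_T(M)\to H^*_T(M_b))$. Hence $\alpha|_{M_b}=\beta|_{M_b}$ for some $\beta\in H^*_T(M)$.

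From here the decomposition is immediate: write $\alpha=\beta|_N+(\alpha-\beta|_N)$, where $\beta|_N\in\mathrm{im}(H^*_T(M)\to H^*_T(N))$ and $(\alpha-\beta|_N)|_{M_b}=0$, so $\alpha-\beta|_N\in K$. Combined with the earlier injectivity of $H^*_T(M)\to H^*_T(N)$ and the trivial intersection with $K$, this yields the desired isomorphism. I do not expect any real obstacle; the only point where one might stumble is confusing the direction of the boundary map, but Theorem \ref{thm:genABsequence} is tailored precisely so that the image of $H^*_T(M)\to H^*_T(M_b)$ equals the kernel of the first Atiyah-Bredon boundary, which is exactly what the argument needs.
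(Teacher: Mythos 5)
Your proof is correct and follows essentially the same route as the paper: injectivity of $H^*_T(M)\to H^*_T(M_b)$ from the Atiyah-Bredon sequence gives the trivial intersection with the kernel, and exactness at $H^*_T(M_b)$ (combined with the long exact sequence of the pair $(M_{b+1},M_b)$ and the hypothesis $N\supset M_{b+1}$) identifies the image of $H^*_T(N)\to H^*_T(M_b)$ with that of $H^*_T(M)\to H^*_T(M_b)$. You merely make explicit the factorization through $\ker\partial$ that the paper leaves implicit.
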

\begin{proof} Because $H^*_T(M)\to H^*_T(M_b)$ is injective, $H^*_T(M)\to H^*_T(N)$ is injective as well, and its image does not intersect $\ker H^*_T(N)\to H^*_T(M_b)$. Exactness of the Atiyah-Bredon sequence at $H^*_T(M_b)$ implies that the image $ H^*_T(M)\to H^*_T(M_b)$ equals the image of $H^*_T(M_{b+1})\to H^*_T(M_b)$. Because $N$ is supposed to contain $M_{b+1}$, this subspace of $H^*_T(M_b)$ is also the same as the image of $H^*_T(N)\to H^*_T(M_b)$, whence the induced map $H^*_T(M)\to H^*_T(N)/(\ker H^*_T(N)\to H^*_T(M_b))$ is an isomorphism. 
\end{proof}

The following lemma is obvious.

\begin{lem} Assume the lowest occurring dimension of a $T$-orbit is the same for every connected component of $M$. Then the action is Cohen-Macaulay if and only if the action on each connected component is Cohen-Macaulay. 
\end{lem}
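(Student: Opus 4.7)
The plan is to reduce the statement to a purely module-theoretic fact about direct sums of Cohen-Macaulay modules, using the fact that equivariant cohomology splits over connected components.

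First I would observe that, since $T$ is connected, every connected component of $M$ is $T$-invariant; compactness of $M$ guarantees that there are only finitely many such components $M^{(1)},\ldots,M^{(k)}$, each itself a compact $T$-manifold. The Cartan complex of $M$ decomposes as the direct sum of the Cartan complexes of the $M^{(i)}$, so as finitely generated graded $S(\mft^*)$-modules
\[
H^*_T(M)\;=\;\bigoplus_{i=1}^{k} H^*_T(M^{(i)}).
\]

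Next I would invoke two standard identities for finitely generated graded modules $A_i$ over a Noetherian graded *local ring $R$ with *maximal ideal $\mfm$, namely
\[
\depth\Bigl(\bigoplus_i A_i\Bigr)=\min_i \depth A_i \qquad\text{and}\qquad \dim\Bigl(\bigoplus_i A_i\Bigr)=\max_i \dim A_i.
\]
The first follows from $\Ext^j_R(R/\mfm,\bigoplus_i A_i)=\bigoplus_i \Ext^j_R(R/\mfm,A_i)$ combined with the Ext-characterisation of depth, and the second from $\Ann(\bigoplus_i A_i)=\bigcap_i \Ann(A_i)$ and the corresponding identity on the vanishing loci in $\mathrm{Spec}\,R$. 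Combined with the inequality $\depth A_i\leq \dim A_i$ recalled earlier, these give that $\bigoplus_i A_i$ is Cohen-Macaulay if and only if each $A_i$ is Cohen-Macaulay and all of the $\dim A_i$ coincide.

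Finally, the hypothesis on the lowest orbit dimension enters through the result of Franz and Puppe cited at the beginning of Section~\ref{sec:CMactions}: the Krull dimension of $H^*_T(M^{(i)})$ equals $r-b_i$, where $b_i$ is the smallest orbit dimension occurring on $M^{(i)}$. Assuming all $b_i$ coincide with a common value $b$, all summands automatically have the same Krull dimension $r-b$, so the general module-theoretic equivalence above specialises to the claim of the lemma. There is no real obstacle in the argument; the only point worth highlighting is that the uniformity hypothesis on the lowest orbit dimension is exactly what prevents the failure mode in which different components have Cohen-Macaulay equivariant cohomology but of different Krull dimensions, a situation that would make $H^*_T(M)$ fail to be Cohen-Macaulay.
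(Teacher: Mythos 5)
Your proof is correct, and it is precisely the argument the paper has in mind: the paper itself offers no proof, simply declaring the lemma obvious after the decomposition $H^*_T(M)=\bigoplus_i H^*_T(M^{(i)})$. Your write-up supplies the expected details --- a finite direct sum of finitely generated graded modules is Cohen-Macaulay if and only if each nonzero summand is Cohen-Macaulay and all summands have the same Krull dimension, with the uniformity of the lowest orbit dimension (via the cited result of Franz and Puppe) guaranteeing the latter condition.
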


\section{Morse-Bott functions}\label{sec:MorseBott}

It is well-known that a $T$-action admitting a Morse-Bott function whose critical set is the fixed point set of the action is equivariantly formal. Replacing the fixed point set by the set of lowest dimensional orbits, we obtain the following generalization of this criterion:

\begin{thm}\label{thm:MorseBott}
Assume $T$ acts on a compact manifold $M$, and let $b$ be the dimension of the smallest occuring orbit. If there exists an invariant Morse-Bott function $f$ whose critical set is equal to a union of connected components of $M_b$, then the action is Cohen-Macaulay. Moreover, if $\kappa_0$ is the absolute minimum of $f$, then $H^*_T(M)\to H^*_T(f^{-1}(\kappa_0))$ is surjective.
\end{thm}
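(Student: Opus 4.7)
The plan is to adapt the classical proof that an invariant Morse-Bott function with critical set $M^T$ forces equivariant formality, replacing ``free $S(\mft^*)$-module'' throughout by ``Cohen-Macaulay $S(\mft^*)$-module of Krull dimension $r-b$''. Let $\kappa_0<\kappa_1<\cdots<\kappa_N$ be the critical values of $f$, choose regular values $c_i\in(\kappa_i,\kappa_{i+1})$, and set $M^i:=f^{-1}((-\infty,c_i])$. I will prove by induction on $i$ that $H^*_T(M^i)$ is Cohen-Macaulay of Krull dimension $r-b$ and that the restriction $H^*_T(M^i)\to H^*_T(f^{-1}(\kappa_0))$ is surjective; the case $i=N$ is the theorem.

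The base case $i=0$ uses that $M^0$ deformation retracts $T$-equivariantly onto $f^{-1}(\kappa_0)$, a disjoint union of connected components of $M_b$. On each such component $C$ the isotropy algebra is constant and equal to some $\mft_0$ with $\dim\mft_0=r-b$; applying Lemma~\ref{lem:trivialpartofaction} to a complement $\mft=\mft_0\oplus\mfk'$ gives $H^*_T(C)\cong S(\mft_0^*)\otimes H^*_{K'}(C)$, with $K'$ acting locally freely on $C$ so that $H^*_{K'}(C)\cong H^*(C/K')$ is finite-dimensional. Hence $H^*_T(C)$ is Cohen-Macaulay of Krull dimension $r-b$, and Lemma~\ref{lem:CMseq} extends this to $H^*_T(f^{-1}(\kappa_0))$.

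For the inductive step, standard $T$-equivariant Morse theory identifies $(M^i,M^{i-1})$ up to $T$-homotopy with attaching the negative disk bundles $D\nu^-_C$ of the critical components $C\subset f^{-1}(\kappa_i)$ along their sphere bundles $S\nu^-_C$. Each such $C$ is again a component of $M_b$, so the equivariant Thom isomorphism gives
\[
H^*_T(M^i,M^{i-1})\;\cong\;\bigoplus_{C\subset f^{-1}(\kappa_i)} H^{*-n_C}_T(C),
\]
which, by the base-case argument, is Cohen-Macaulay of Krull dimension $r-b$. To apply Lemma~\ref{lem:CMseq}(1) it then suffices to split the long exact sequence of the pair, i.e.~to show that the connecting map $\partial\colon H^{*-1}_T(M^{i-1})\to H^*_T(M^i,M^{i-1})$ vanishes. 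Naturality for the inclusion of pairs $(D\nu^-_C,S\nu^-_C)\hookrightarrow(M^i,M^{i-1})$ factors $\partial$ through the Gysin connecting $H^{*-1}_T(S\nu^-_C)\to H^{*-n_C}_T(C)$, whose image is exactly the kernel of multiplication by the equivariant Euler class $e_T(\nu^-_C)\in H^{n_C}_T(C)$.

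The crucial and hardest point is therefore to show that $e_T(\nu^-_C)$ is a non-zero-divisor in $H^*_T(C)$. The key geometric input is that for every critical point $p\in C$ one has $M^p\subseteq M_b$, and locally $C$ coincides with the fixed submanifold $M^{T_0}$; consequently $T_0$ acts on each fibre of $\nu^-_C$ without nonzero fixed vectors. Decomposing $\nu^-_C$ into $T_0$-weight subbundles, the leading term of $e_T(\nu^-_C)$ in the filtration of $H^*_T(C)\cong S(\mft_0^*)\otimes H^*_{K'}(C)$ by cohomological degree on $C/K'$ equals $\bigl(\prod_\alpha\alpha\bigr)\otimes 1$, a nonzero element of $S(\mft_0^*)$. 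Since $H^*_{K'}(C)$ is finite-dimensional the filtration is bounded, so injectivity of multiplication on the associated graded lifts to $H^*_T(C)$ itself, giving $\partial=0$. Lemma~\ref{lem:CMseq}(1) then furnishes Cohen-Macaulayness of $H^*_T(M^i)$, and surjectivity onto $H^*_T(f^{-1}(\kappa_0))$ propagates via the resulting surjection $H^*_T(M^i)\twoheadrightarrow H^*_T(M^{i-1})$. I expect the main technical nuisance to be the possible non-orientability of $\nu^-_C$, which would force me to work with twisted coefficients or pass to an equivariant double cover before invoking the Thom isomorphism and the Euler class argument above.
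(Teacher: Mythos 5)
Your proposal is correct and follows essentially the same route as the paper: induction over the critical levels, excision plus the equivariant Thom isomorphism for the negative disk bundles, and injectivity of multiplication by the equivariant Euler class of $\nu^-_C$ to split the long exact sequences, followed by Lemma~\ref{lem:CMseq}; the only difference is that the paper simply cites Duflot (Proposition 4) for that injectivity, the point being exactly yours, namely that a component $C$ of $M_b$ is a component of $M^{\mft_0}$ so its negative normal bundle has no $T_0$-fixed vectors. Your worry about orientability is moot: the absence of fixed vectors splits $\nu^-_C$ into weight two-plane bundles carrying canonical complex structures, hence a canonical orientation.
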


\begin{proof}
For a real number $a$, let
\[
M^a=f^{-1}((-\infty,a]).
\]
Let $\kappa$ be a critical value of $f$, and $B^\kappa_1,\ldots, B^\kappa_{j_\kappa}$ be the connected components of the critical set at level $\kappa$. Denote by $D^- B^\kappa_i$ and $S^- B^\kappa_i$ the disk respectively sphere bundle in the negative normal bundle (see e.g.~\cite[Section 1]{AtiyahBottYang}) of $B^\kappa_i$. We write $\lambda^\kappa_i$ for the rank of $D^-B^\kappa_i$. Let $E^\kappa_i\in H^*_T(B^\kappa_i)$ be the equivariant Euler class of $D^-B^\kappa_i$. Consider the following diagram, in which the top row is the long exact sequence of the pair $(M^{\kappa+\varepsilon},M^{\kappa-\varepsilon})$.
\[
\xymatrix{ \ldots \ar[r] & H^*_T(M^{\kappa+\varepsilon},M^{\kappa-\varepsilon}) \ar[r] \ar[d] & H^*_T(M^{\kappa+\varepsilon}) \ar[r] \ar[d] & H^*_T(M^{\kappa-\varepsilon}) \ar[r] & \ldots \\
& \bigoplus_i H^*_T(D^-B^\kappa_i,S^-B^\kappa_i) \ar[r] \ar[d] & \bigoplus_i H^*_T(D^-B^\kappa_i) \ar[d] \\
& \bigoplus_i H^{*-\lambda^\kappa_i}_T(B^\kappa_i) \ar[r]^{\bigoplus (\cdot E^\kappa_i)} & \bigoplus_i H^{*}_T(B^\kappa_i)  }
\]
The two vertical arrows on the left are isomorphisms (one because of excision, and the other is the inverse of the Thom isomorphism), and because $D^-B^\kappa_i$ has no $T^\kappa_i$-fixed vectors, where $T^\kappa_i$ is the isotropy group of $B^\kappa_i$, multiplication with the Euler classes is injective by \cite[Proposition 4]{Duflot}. Thus, $H^*_T(M^{\kappa+\varepsilon},M^{\kappa+\varepsilon})\to H^*_T(M^{\kappa+\varepsilon})$ is injective and we obtain short exact sequences
\[
0 \to H^*_T(M^{\kappa+\varepsilon},M^{\kappa-\varepsilon})\to H^*_T(M^{\kappa+\varepsilon})\to H^*_T(M^{\kappa-\varepsilon})\to 0.
\]
For the absolute minimum $\kappa_0$ of $f$, $H^*_T(M^{\kappa_0})$ is the sum of Cohen-Macaulay modules of Krull dimension $r-b$, and hence Cohen-Macaulay by Lemma \ref{lem:CMseq}. If for some critical value $\kappa$ we already know that $H^*_T(M^{\kappa-\varepsilon})$ is Cohen-Macaulay of Krull dimension $r-b$, the short sequence above, combined with Lemma \ref{lem:CMseq} and the fact that $H^*_T(M^{\kappa+\varepsilon},M^{\kappa+\varepsilon})$ is Cohen-Macaulay of Krull dimension $r-b$ as well, implies that $H^*_T(M^{\kappa+\varepsilon})$ is Cohen-Macaulay of the same Krull dimension.
\end{proof}

\begin{rem} Note that $f$ is automatically an equivariantly perfect Morse-Bott function as all components of $M_b$ are equivariantly self-completing, cf.~\cite[Prop.~1.9.]{AtiyahBott}
\end{rem}

\section{Equivariant disk bundle decompositions}\label{sec:diskbundle}

In this section, we will prove a generalization of a theorem of Harada, Henriques and Holm, see \cite{Harada}, in particular Theorem 2.2 of the unpublished version.  For us, a cell bundle is a $T$-equivariant oriented disk bundle over a compact $T$-manifold $Y$. The dimension of a cell bundle is the fiber dimension.

We say that a compact manifold $M$ has a $T$-invariant disk bundle decomposition if it can be built from a union of zero-dimensional cell bundles, and successively attaching cell bundles. The attaching maps are not required to map the boundary into smaller dimensional cell bundles.

\begin{thm}\label{thm:diskbundle} Let $k$ be a fixed integer. Let $M$ be a compact $T$-manifold that admits a finite $T$-invariant disk bundle decomposition into finitely many even-dimensional cell bundles $E\to Y$ that satisfy the following conditions:
\begin{enumerate}
\item $T$ acts on $Y$ with only one local isotropy type of dimension $k$ 
\item $\Hb^{odd}(Y)=H^{odd}(Y/T)=0$.
\end{enumerate} Then the action is Cohen-Macaulay. 
\end{thm}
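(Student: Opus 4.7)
The plan is to induct along the disk bundle decomposition. Writing $M^{(0)}\subset M^{(1)}\subset\cdots\subset M^{(N)}=M$, where $M^{(0)}$ is the disjoint union of the zero-dimensional cell bundles (i.e.\ of the bases $Y$) and each $M^{(j+1)}$ is obtained from $M^{(j)}$ by attaching a single cell bundle $E_{j+1}\to Y_{j+1}$ of even rank $d_{j+1}$ along its boundary sphere bundle $S(E_{j+1})$, I would prove by induction on $j$ that $H^*_T(M^{(j)})$ is a Cohen-Macaulay $S(\mft^*)$-module of Krull dimension $k$ that is concentrated in even degrees.

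For the base case I would first analyse $H^*_T(Y)$ for a single base $Y$, which may be assumed connected. Since every orbit on $Y$ has the same dimension $r-k$, the isotropy algebra is a fixed subalgebra $\mfk\subset\mft$ of dimension $k$; choose a complementary subtorus $K'\subset T$ with $\mfk\oplus\mfk'=\mft$. The connected subgroup $K$ with Lie algebra $\mfk$ then acts trivially on $Y$ while $K'$ acts locally freely, so Lemma \ref{lem:trivialpartofaction} together with $H^*_{K'}(Y)\cong H^*(Y/K')=H^*(Y/T)$ yields
\[
H^*_T(Y)=S(\mfk^*)\otimes H^*(Y/T).
\]
The hypothesis $H^{odd}(Y/T)=0$ forces this module to be concentrated in even degrees, and as a free $S(\mfk^*)$-module on a finite-dimensional graded vector space it is Cohen-Macaulay of Krull dimension $k$. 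Summing over the connected components of $M^{(0)}$ and applying Lemma \ref{lem:CMseq}(1) disposes of the base case.

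For the inductive step, excision and the equivariant Thom isomorphism (the orientation is part of the datum) applied to the long exact sequence of $(M^{(j+1)},M^{(j)})$ give
\[
H^*_T(M^{(j+1)},M^{(j)})\cong H^*_T(E_{j+1},S(E_{j+1}))\cong H^{*-d_{j+1}}_T(Y_{j+1}),
\]
which, by the analysis above and because $d_{j+1}$ is even, is again Cohen-Macaulay of Krull dimension $k$ and concentrated in even degrees. Combined with the inductive hypothesis on $H^*_T(M^{(j)})$, the connecting homomorphism (which raises degree by one) must vanish for parity reasons, so the long exact sequence splits into the short exact sequence
\[
0\to H^*_T(M^{(j+1)},M^{(j)})\to H^*_T(M^{(j+1)})\to H^*_T(M^{(j)})\to 0.
\]
Lemma \ref{lem:CMseq}(1) then transfers the Cohen-Macaulay property of Krull dimension $k$ to $H^*_T(M^{(j+1)})$, which obviously remains concentrated in even degrees. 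After $N$ steps one obtains the theorem.

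The key conceptual point, and the reason the hypothesis $H^{odd}(Y/T)=0$ appears in precisely this form, is the even-degree concentration of $H^*_T(Y)$: this is exactly what makes every long exact sequence split and allows Lemma \ref{lem:CMseq} to be applied at each stage. It is the natural replacement, in the Cohen-Macaulay setting, for the standard $H^{odd}(M)=0$ hypothesis used in the equivariantly formal version of this disk-bundle argument. The only minor verification is that on each connected component of $Y$ the isotropy algebra is literally constant (not merely of constant dimension), so that Lemma \ref{lem:trivialpartofaction} applies; this is automatic for a connected torus acting with orbits of constant dimension.
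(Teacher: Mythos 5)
Your proposal is correct and follows essentially the same route as the paper's proof: induction over the attaching of cell bundles, with the base case handled via $H^*_T(Y)\cong H^*(Y/T)\otimes S(\mfk^*)$, the inductive step via excision and the equivariant Thom isomorphism to identify $H^*_T(M^{(j+1)},M^{(j)})$ with $H^{*-d}_T(Y)$, the vanishing of the connecting map by the parity argument (maintaining even-degree concentration as part of the induction, just as the paper maintains $H^{odd}_T(M^i)=0$), and Lemma \ref{lem:CMseq}(1) to conclude. The only difference is cosmetic: you spell out the reduction $H^*_{K'}(Y)\cong H^*(Y/T)$ via Lemma \ref{lem:trivialpartofaction}, which the paper states without elaboration.
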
 
\begin{proof}
As the maximal dimension of an isotropy algebra is $k$, we know \cite[Proposition 5.1]{FranzPuppe2003} that the Krull dimension of $H^*_T(M)$ is $k$.

As in the definition of a disk bundle decomposition, let $M^0$ be the union of zero-dimensional cell bundles, and $M^i$ be the space obtained by attaching the first $i$ cell bundles. We prove by induction that  $H^*_T(M^i)$ is Cohen-Macaulay of Krull dimension $k$, and $H^{odd}_T(M^i)=0$. First, $H^*_T(M^0)$ is the sum of Cohen-Macaulay modules of the form $H^*_T(Y)=H^*(Y/T)\otimes S(\mft_Y^*)$, where $\mft_Y$ is the unique $k$-dimensional isotropy algebra of $Y$. So $H^*_T(M^0)$ is Cohen-Macaulay of Krull dimension $k$ by Lemma \ref{lem:CMseq}, and $H^{odd}_T(M^0)=0$.

We claim that in the long exact sequence of the pair $(M^{i+1},M^i)$, the boundary operator vanishes. For this, it is sufficient to show that $H^{odd}_T(M^{i+1},M^i)=0$. Let $E\to Y$ be the cell bundle that gets attached to $M^i$. Denote its dimension by $2d$ and the unique isotropy algebra of $Y$ by $\mft_Y$. Then
\begin{equation} \label{eq:attachingCM}
H^*_T(M^{i+1},M^i) = H^*_{T}(E,\partial E) \overset{\text{Thom}}{=} H^{*-2d}_T(Y)=\Hb^{*-2d}(Y)\otimes S(\mft_Y^*),
\end{equation}
so by assumption $(2)$, $H^{odd}_T(M^{i+1},M^i)$ vanishes. Thus, the boundary operator in the long exact sequence of the pair $(M^{i+1},M^i)$ vanishes and we obtain a short exact sequence
\[
0\to H^*_T(M^{i+1},M^i)\to H^*_T(M^{i+1})\to H^*_T(M^i)\to 0.
\]
It follows that $H^{odd}_T(M^{i+1})=0$. Because \eqref{eq:attachingCM} implies that $H^*_T(M^{i+1},M^i)$ is Cohen-Macaulay of Krull dimension $k$,  $H^*_T(M^{i+1})$ is also Cohen-Macaulay of Krull dimension $k$ by Lemma \ref{lem:CMseq}.
\end{proof}

\section{Equivariant cohomology and the bottom stratum}\label{sec:bottomstratum}
Although it is not an equivalent characterization of equivariant formality, the injectivity of the restriction map $H^*_T(M)\to H^*_T(M^\mft)$ due to the torsion-freeness of $H^*_T(M)$ is an important property of equivariantly formal actions. In this section, we replace $M^\mft$ by the bottom stratum $B$ of the action, and find an algebraic characterization of injectivity of $H^*_T(M)\to H^*_T(B)$ which has an interesting geometric consequence, see Theorem \ref{thm:OFA+injective}. This will be applied in Section \ref{sec:ActionsWithAcyclicOrbitSpace}.

If $T$ acts on a manifold $N$, then we say that $\alpha\in H^*_T(N)$ is {\it invisible} if no complexified nonregular isotropy algebra of the $T$-action on $N$ is contained in $\supp \alpha$. If $N$ is compact,  this is by Lemma \ref{lem:kernelM^K} equivalent to saying that 
for every nonregular isotropy algebra $\mfk$ of the action, $\alpha$ is in the kernel of $H^*_T(N)\to H^*_T(N^\mfk)$. This motivates the terminology.
 
\begin{prop} \label{prop:bottominjective} Let $T$ act on a compact manifold $M$, and let $B$ denote the bottom stratum of the action. Then the natural map
\[
H^*_T(M)\to H^*_T(B)
\]
is injective if and only if for every $p\notin B$, $H^*_{T}(M^p)$ does not contain any invisible element.
\end{prop}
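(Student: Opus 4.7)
The plan is to exploit the partial order on the family $\{M^p\}$, together with Lemma \ref{lem:kernelM^K}, to translate between restrictions to the $M^p$'s and invisibility of cohomology classes. For the direction ($\Leftarrow$), given $\alpha \in \ker(H^*_T(M) \to H^*_T(B))$, I would induct upward on this partial order to show $\alpha|_{M^p} = 0$ for every $p$. The base case is immediate: if $p \in B$ then $M^p \subset B$, so $\alpha|_{M^p} = 0$. For the inductive step with $p \notin B$, every nonregular isotropy algebra $\mfh$ of the $T$-action on $M^p$ satisfies $\mfh \supsetneq \mft_p$, so each component of $(M^p)^\mfh$ is some $M^q$ strictly below $M^p$ in the partial order. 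By induction $\alpha|_{(M^p)^\mfh} = 0$, and Lemma \ref{lem:kernelM^K} applied to the $T$-action on the compact manifold $M^p$ then forces $\alpha|_{M^p}$ to be invisible; the hypothesis gives $\alpha|_{M^p} = 0$. Applied to principal points $p$ in each connected component of $M$, for which $M^p$ is that entire component, this yields $\alpha = 0$.

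For ($\Rightarrow$), I would argue contrapositively: given $p \notin B$ and a nonzero invisible class $\gamma \in H^*_T(M^p)$, construct $\tilde\gamma \in H^*_T(M)$ which is nonzero but restricts to zero on $B$. The normal bundle $\nu$ of the closed $T$-invariant submanifold $M^p$ has no $\mft_p$-fixed vectors; it therefore splits $T$-equivariantly into a sum of complex weight bundles of the subtorus with Lie algebra $\mft_p$, and so admits a canonical $T$-equivariant orientation. Composing the Thom isomorphism $H^*_T(M^p) \cong H^{*+d}_T(M, M \setminus M^p)$ with the map $H^{*+d}_T(M, M \setminus M^p) \to H^{*+d}_T(M)$ defines the push-forward $\tilde\gamma$. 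Two properties are essentially automatic: $\tilde\gamma|_{M^p} = \gamma \cdot e(\nu)$ is nonzero, since $e(\nu)$ is a nonzerodivisor on $H^*_T(M^p)$ by Duflot \cite[Proposition 4]{Duflot}; and $\tilde\gamma|_{M \setminus M^p} = 0$, since $\tilde\gamma$ comes from a relative class.

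The main obstacle is then establishing $\tilde\gamma|_B = 0$, i.e., $\tilde\gamma|_{M^q} = 0$ for each $q \in B$. The key geometric observation is that $M^q$ is connected and, for $q \in B$, every point of $M^q$ has isotropy algebra exactly $\mft_q$; so if $r \in M^q \cap M^p$ then $\mft_q = \mft_r \supset \mft_p$, giving $M^q \subset M^{\mft_p}$, and the connectedness of $M^q$ together with $M^p$ being a component of $M^{\mft_p}$ forces $M^q \subset M^p$. Therefore either $M^q \cap M^p = \emptyset$, in which case $\tilde\gamma|_{M^q}$ factors through $H^*_T(M \setminus M^p)$ and vanishes, or $M^q \subset M^p$, in which case $\mft_q \supsetneq \mft_p$ strictly (equality would give $M^q = M^p$ and hence $p \in B$, contradicting $p \notin B$). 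In the latter case $\mft_q$ is a nonregular isotropy algebra on $M^p$, and invisibility of $\gamma$ combined with Lemma \ref{lem:kernelM^K} yields $\gamma|_{(M^p)^{\mft_q}} = 0$, whence $\gamma|_{M^q} = 0$ and $\tilde\gamma|_{M^q} = (\gamma \cdot e(\nu))|_{M^q} = 0$. This contradicts injectivity of $H^*_T(M) \to H^*_T(B)$.
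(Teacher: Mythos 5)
Your proposal is correct and follows essentially the same route as the paper: both directions rest on Lemma \ref{lem:kernelM^K} together with Duflot's injectivity of the equivariant push-forward, with the forward direction proved by pushing forward a nonzero invisible class from $H^*_{T}(M^p)$ and the converse by an induction showing that a class killed on $B$ restricts to an invisible, hence zero, class on each $M^p$. The only differences are organizational: you induct over the poset of the $M^p$ rather than on the length of chains of isotropy algebras, and you verify the vanishing of the push-forward on $B$ by an explicit case analysis on the components of $B$ instead of the paper's support argument.
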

\begin{proof} 
Assume first that $\alpha\in H^*_T(M^p)$ is invisible for $p\notin B$, i.e., the only complexified isotropy algebra contained in $\supp \alpha$ is $\mft_p\otimes \CC$. Because the equivariant push-forward map $H^*_T(M^p)\to H^*_T(M)$ is injective \cite[Proposition 4]{Duflot}, the image of $\alpha$ has the same support as $\alpha$ and thus maps to zero in $H^*_T(B)$ by Lemma \ref{lem:kernelM^K}.

Assume now that for $p\notin B$, $H^*_T(M^p)$ contains no invisible elements. We prove that $H^*_T(M)\to H^*_T(B)$ is injective by induction over the length $s$ of the longest chain $\mft_1 \subsetneq \mft_2\subsetneq \ldots \subsetneq \mft_s$
of isotropy algebras.

If this length is $s=1$, i.e., the action has only one local isotropy type, we have $M=B$ and the claim is trivial. Assume the proposition is proven for all actions with $s<m$, and let $T$ act on $M$ with $s=m>1$. 

Let $\mfk_j\neq \{0\}$ denote the minimal nonregular isotropy algebras of the action, i.e., if $\mfh\subset \mfk_j$ is any isotropy group, then either $\mfh$ is the regular isotropy group (i.e., $\{0\}$ if the action is effective) or $\mfh=\mfk_j$. Consider the map
\[
\eta: H^*_T(M)\to \bigoplus_j H^*_T(M^{\mfk_j}).
\]
By Lemma \ref{lem:kernelM^K}, the kernel of $\eta$ consists of those $\alpha$ whose support does not contain any $\mfk_j\otimes \CC$. But the $\mfk_j$ are exactly the minimal isotropy algebras, so we get
\[
\ker \eta=\{\alpha\in H^*_T(M)\mid  \alpha \text{ invisible}\}.
\]
By assumption (choose a regular $p$), such invisible elements do not exist, so $\eta$ is injective. The longest chains of isotropy algebras for the $T$-actions on $M^{\mfk_j}$ are shorter than the one of the action on $M$, so by induction we know that the maps $H^*_{T}(M^{\mfk_j})\to H^*_{T}(B\cap M^{\mfk_j})$, induced by the inclusions, are injective. It follows that 
\[
H^*_T(M)\to \bigoplus_j H^*_T(M^{\mfk_j})\to \bigoplus_j H^*_T(B\cap M^{\mfk_j})
\]
is injective. But this map consists only of copies of the natural maps induced by the inclusions of components of the bottom stratum, and thus, $H^*_T(M)\to H^*_T(B)$ has to be injective itself.
\end{proof}

Next, we find a geometric consequence of the existence of an invisible element $\alpha\in H^*_T(M)$. Duflot \cite[proof of Theorem 1]{Duflot} proves that for every $i$, the push-forward $(\varphi_i)_*:H^*_T(M_{(i)})\to H^*_T(M\setminus M_{i-1})$ of the inclusion $\varphi_i:M_{(i)}\to M\setminus M_{i-1}$ is injective, i.e., the corresponding Gysin sequence is in fact a short exact sequence
\begin{equation}\label{eq:GysinMi}
0\to H^*_T(M_{(i)})\to H^*_T(M\setminus M_{i-1})\to H^*_T(M\setminus M_i)\to 0.
\end{equation}  
Note that this is a sequence of $S(\mft^*)$-modules, see e.g.~the discussion on the equivariant push-forward map in \cite[\S 2]{AtiyahBott}, or \cite[p.~221]{GGK}. These considerations imply the following lemma:
\begin{lem}\label{lem:supp0} Let $T$ act on a compact manifold $M$. If $\alpha\in H^*_T(M)$ is invisible, then $\alpha$ is not in the kernel of the map $H^*_T(M)\to H^*_T(\Mr)$.
\end{lem}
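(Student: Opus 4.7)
I would prove the contrapositive: if $0\neq \alpha \in H^*_T(M)$ satisfies $\alpha|_{\Mr} = 0$, then some complexified nonregular isotropy algebra lies in $\supp\alpha$. The strategy is to track $\alpha$ through the telescope of short exact sequences \eqref{eq:GysinMi}. For $b-1\leq i$, let $\alpha^{(i)}\in H^*_T(M\setminus M_i)$ be the restriction of $\alpha$, with the convention $M_{b-1}=\emptyset$ so that $\alpha^{(b-1)}=\alpha$. The vanishing $\alpha|_{\Mr}=0$ forces $\alpha^{(i)}=0$ once $i$ reaches the regular orbit dimension minus one, so there is a smallest index $i_0$ with $\alpha^{(i_0)}=0$, satisfying $b\leq i_0$ and $i_0$ strictly less than the regular orbit dimension; consequently every orbit in $M_{(i_0)}$ has isotropy algebra strictly bigger than the regular one, i.e.\ nonregular.

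Applying the exactness of \eqref{eq:GysinMi} at $i=i_0$, the nonzero class $\alpha^{(i_0-1)}$ is $(\varphi_{i_0})_*\beta$ for some nonzero $\beta \in H^*_T(M_{(i_0)}) = \bigoplus_j H^*_T(\Mr^{p_j})$; write $\beta = \sum_j \beta_j$. For each $j$ with $\beta_j \neq 0$, the subtorus $(T_{p_j})_0$ acts trivially on $\Mr^{p_j}$ and $(\Mr^{p_j})^{\mft_{p_j}} = \Mr^{p_j}$; the argument used in the second half of the proof of Lemma~\ref{lem:kernelM^K} relies only on the decomposition supplied by Lemma~\ref{lem:trivialpartofaction} and the fact that $S(\mft_{p_j}^*)$ is a domain, so it goes through verbatim despite the noncompactness of $\Mr^{p_j}$ and yields $\mft_{p_j}\otimes\CC \subset \supp \beta_j$.

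Duflot's injectivity of $(\varphi_{i_0})_*$ together with its $S(\mft^*)$-linearity gives $\Ann\alpha^{(i_0-1)} = \Ann\beta = \bigcap_{j:\beta_j\neq 0}\Ann\beta_j$, so $\supp\alpha^{(i_0-1)} = \bigcup_{j:\beta_j\neq 0}\supp\beta_j$. Choosing any $j_0$ with $\beta_{j_0}\neq 0$, the previous step gives $\mft_{p_{j_0}}\otimes\CC\subset\supp\alpha^{(i_0-1)}$, and since $H^*_T(M)\to H^*_T(M\setminus M_{i_0-1})$ is $S(\mft^*)$-linear we conclude $\supp\alpha\supset\supp\alpha^{(i_0-1)}\supset \mft_{p_{j_0}}\otimes\CC$. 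As $\mft_{p_{j_0}}$ is nonregular, $\alpha$ is not invisible, completing the contrapositive. The step that deserves most care is the support inclusion $\mft_{p_j}\otimes\CC\subset\supp\beta_j$: this is essentially Lemma~\ref{lem:kernelM^K} applied to $\Mr^{p_j}$, so the point to verify is that the relevant direction of that lemma truly needs no compactness hypothesis—a direct inspection of the cited proof shows it does not.
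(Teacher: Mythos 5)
Your proof is correct and follows essentially the same route as the paper's: both run the class through the telescope of Gysin sequences \eqref{eq:GysinMi}, use Duflot's injectivity of $(\varphi_i)_*$ to identify the support of an element of $\ker\bigl(H^*_T(M\setminus M_{i-1})\to H^*_T(M\setminus M_i)\bigr)$ with that of its preimage in $H^*_T(M_{(i)})=\bigoplus_j H^*(\Mr^{p_j}/T)\otimes S(\mft_{p_j}^*)$, and observe that this support contains a nonregular $\mft_{p_j}\otimes\CC$. Your explicit tracking of the first index $i_0$ at which the restriction dies, and your remark that the relevant half of Lemma~\ref{lem:kernelM^K} requires no compactness, merely spell out what the paper's shorter write-up leaves implicit.
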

\begin{proof} If we can show that an invisible element can never be in the kernel of $H^*_T(M\setminus M_{i-1})\to H^*_T(M\setminus M_i)$ for $i<r$, then the claim follows. This kernel is by \eqref{eq:GysinMi} the same as the image of $(\varphi_i)_*$. For any $\omega\in H^*_T(M_{(i)})$,  we have $\supp (\varphi_i)_*\omega=\supp \omega$ because of the injectivity of $(\varphi_i)_*$. But on the other hand, if we choose a point $p_j$ in each connected component of $M_{(i)}$, then $H^*_T(M_{(i)})=\bigoplus_j H^*_T(\Mr^{p_j})= \bigoplus_j H^*(\Mr^{p_j}/T)\otimes S(\mft_{p_j}^*)$ by Lemma \ref{lem:trivialpartofaction}, and the $\mft_{p_j}$ are nonregular isotropy algebras on $M\setminus M_{i-1}$.
\end{proof}

\begin{cor}\label{cor:nonexistencesupp0}
Let $T$ act nontransitively on a connected compact manifold $M$. If $\Mr/T$ is acyclic, i.e., $\Hb^*(\Mr)=H^*(\Mr/T)=\RR$, then there are no invisible elements in $H^*_T(M)$.
\end{cor}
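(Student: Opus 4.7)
The plan is to argue by contradiction: suppose $\alpha \in H^*_T(M)$ is a nonzero invisible element; by Lemma \ref{lem:supp0} the restriction $\alpha|_\Mr$ is nonzero in $H^*_T(\Mr)$, and I will turn this against invisibility.

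First I would compute $H^*_T(\Mr)$. Let $\mfk$ be the (regular) principal isotropy algebra of the action on $M$; the connected Lie subgroup $K_0 \subset T$ with Lie algebra $\mfk$ acts trivially on all of $M$, because $\mfk \subset \mft_p$ for every $p$. Choose a subtorus $K' \subset T$ complementing $\mfk$, so that $\mft = \mfk \oplus \mfk'$. By Lemma \ref{lem:trivialpartofaction}, $H^*_T(\Mr) = S(\mfk^*) \otimes H^*_{K'}(\Mr)$; since the $K'$-action on $\Mr$ is locally free with orbit space $\Mr/K' = \Mr/T$, one has $H^*_{K'}(\Mr) = H^*(\Mr/T) = \RR$, and therefore $H^*_T(\Mr) = S(\mfk^*)$.

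The same lemma applied to $M$ itself yields $H^*_T(M) = S(\mfk^*) \otimes H^*_{K'}(M)$. Writing $\alpha = \sum f_i \otimes \beta_i$ with $\{f_i\} \subset S(\mfk^*)$ linearly independent and $\beta_i \in H^*_{K'}(M)$, a short check shows that $\alpha$ is $T$-invisible if and only if every $\beta_i$ is $K'$-invisible: the identification $M^\mfh = M^{\mfh \cap \mfk'}$ (a consequence of $K_0$ acting trivially) makes the nonregular $T$-fixed sets coincide with the nontrivial $K'$-fixed sets, and the $S(\mfk^*)$-factor is unaffected by restriction. Choosing some $\beta_i \neq 0$ and replacing $(T, \alpha)$ by $(K', \beta_i)$, we may assume $\mfk = 0$; the $K'$-action on $M$ still satisfies the hypotheses of the corollary.

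Now $H^*_T(\Mr) = \RR$ is concentrated in degree zero. Since each $\mfh \otimes \CC$ is an irreducible linear subspace, the inclusion $\mfh \otimes \CC \subset \supp \alpha = \bigcup_k \supp \alpha^{(k)}$ forces $\mfh \otimes \CC \subset \supp \alpha^{(k)}$ for some homogeneous component; thus invisibility passes to homogeneous components and we may take $\alpha$ homogeneous. If $\deg \alpha > 0$ then $\alpha|_\Mr = 0$, contradicting Lemma \ref{lem:supp0}; so $\alpha \in H^0_T(M) = \RR$, i.e.~$\alpha = c \cdot 1$ with $c \neq 0$. For any nonregular isotropy algebra $\mfh$, the restriction $\alpha|_{M^\mfh} = c$ is nonzero in $H^0_T(M^\mfh)$, contradicting invisibility. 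The main obstacle is ensuring that such an $\mfh$ actually exists, i.e.~$\Ms \neq \emptyset$: if $\Ms$ were empty the notion of invisibility would degenerate ($1$ itself would be vacuously invisible), and ruling this out is the point at which nontransitivity, together with compactness and connectedness of $M$ and the acyclicity of $\Mr/T$, must genuinely be brought to bear.
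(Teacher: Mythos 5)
Your argument tracks the paper's proof up to its last step, but the case you flag at the end --- $\Ms=\emptyset$ --- is a genuine gap, not a loose end, and you leave it open. What you have actually shown is: if a nonzero invisible element exists, then (after reducing to the effective case) it is a nonzero multiple of $1$, and hence, since $1$ restricts to a nonzero class on every nonempty $M^{\mfh}$, there is no nonregular isotropy algebra at all; that is, the effective action is locally free. This is precisely where the paper's proof makes its decisive move: if the action is locally free then $\Mr=M$, so $M/T$ is a compact orbifold with $H^*(M/T)=\RR$, and Poincar\'e duality for $\Mr/T$ (recalled in the Preliminaries for locally free actions) gives $H^{\dim M/T}(M/T)\cong H^0(M/T)\neq 0$, forcing $\dim M/T=0$; the action is then transitive, contradicting the hypothesis. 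Without this step the conclusion does not follow: for a locally free effective action the element $1$ is vacuously invisible, so the hypotheses of nontransitivity, compactness and acyclicity of $\Mr/T$ must be combined exactly as above to rule that case out. Observing that they ``must genuinely be brought to bear'' is a correct diagnosis, but it is not a proof.

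Two smaller points. Your reduction to homogeneous $\alpha$ is argued with the support inclusion pointing the wrong way: to pass invisibility to components you need $\supp \alpha^{(k)}\subset \supp\alpha$, whereas the inclusion $\supp\alpha\subset\bigcup_k\supp\alpha^{(k)}$ plus irreducibility gives the converse implication. The claim is nevertheless true and easier than you make it: by Lemma \ref{lem:kernelM^K}, invisibility of $\alpha$ means $\left.\alpha\right|_{M^{\mfh}}=0$ for every nonregular isotropy algebra $\mfh$, and since restriction preserves degree this kills each homogeneous component separately. (The paper sidesteps the reduction entirely: it only needs that the degree-zero component of $\alpha$ is nonzero, whence $1$ itself is invisible.) Finally, your explicit splitting $H^*_T(M)=S(\mfk^*)\otimes H^*_{K'}(M)$ is a correct but laborious way of saying ``assume without loss of generality that the action is effective'', which is all the paper does.
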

\begin{proof}
Assume without loss of generality that the $T$-action is effective. Let $\alpha\in H^*_T(M)$ be invisible. By Lemma \ref{lem:supp0}, $\alpha$ defines a nonzero cohomology class in $H^*_T(\Mr)=H^*(\Mr/T)=\RR$, which implies that $\alpha$ is a $0$-form. In other words, $1\in H^*_T(M)$ is invisible. But the support of $1\in H^*_T(M)$ contains the complexification of every isotropy algebra, cf.~Proposition \ref{prop:characterizationlocallyfree}. This means that the action is locally free, i.e., $\Mr=M$. As $\Mr/T$ satisfies Poincar\'{e} duality, this is only possible if $\Mr/T=M/T$ is a point, i.e., the action is transitive.
\end{proof}

\begin{dfn}\label{dfn:ofaetc}
We call $M^p/T$ a {\it face} of the orbit space.
We say that $M/T$ is {\it closed-face-acyclic} if its closed faces are acyclic; that means that for any point $p\in M$, we have $\Hb^*(M^p)=\RR$. If the open faces of $M/T$ are acyclic,  i.e., if for all $p$ we have $\Hb^*(\Mr^p)=\RR$, we call $M/T$ {\it open-face-acyclic}. We say that the orbit space is  {\it almost open-face-acyclic} if we have $\Hb^*(\Mr^p)=\RR$ for all $p$ not contained in the bottom stratum $B$ of the action.
\end{dfn}

\begin{rem} For torus manifolds, the notion of face-acyclicity as introduced by \cite{MasudaPanov} is the same as our condition of having closed-face-acyclic orbit space, but note that they use integer coefficients instead of the reals.
\end{rem}

With this notation, we obtain
\begin{thm}\label{thm:OFA+injective} If $M/T$ is almost open-face-acyclic, then the natural map $H^*_T(M)\to H^*_T(B)$ is injective.
\end{thm}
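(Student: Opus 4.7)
The plan is to reduce the theorem to the two previously established results that have been lined up for exactly this purpose: the algebraic characterization of Proposition \ref{prop:bottominjective} and the nonexistence criterion of Corollary \ref{cor:nonexistencesupp0}. By Proposition \ref{prop:bottominjective}, it suffices to show that for every $p \notin B$, the equivariant cohomology $H^*_T(M^p)$ contains no invisible element (where invisibility is understood relative to the $T$-action on $M^p$).

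Fix therefore an arbitrary $p \notin B$. The submanifold $M^p$ is connected by its definition and compact as a closed subspace of $M$. The hypothesis that $M/T$ is almost open-face-acyclic gives precisely $\Hb^*(\Mr^p) = \RR$, so the regular orbit space of the $T$-action on $M^p$ is acyclic. To invoke Corollary \ref{cor:nonexistencesupp0}, the remaining requirement is that the $T$-action on $M^p$ be nontransitive. This is where I would insert the only small geometric step: if $T$ acted transitively on $M^p$, then every point of $M^p$ would share the isotropy algebra $\mft_p$ of $p$, forcing $M^p = \Mr^p$, which by the definition of the bottom stratum would place $p$ in $B$, contradicting the choice of $p$.

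With the three hypotheses of Corollary \ref{cor:nonexistencesupp0} verified, the corollary tells us that $H^*_T(M^p)$ has no invisible elements. Since $p \notin B$ was arbitrary, Proposition \ref{prop:bottominjective} yields the desired injectivity of $H^*_T(M) \to H^*_T(B)$.

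I do not expect a serious obstacle in this proof: the heavy lifting has already been done in the previous section, where both the algebraic reduction to absence of invisible classes and the cohomological criterion for that absence were set up. The only point to be careful about is the transitivity argument, and in particular to note that the definition of $B$ indeed characterizes those $p$ for which $M^p$ consists of a single $T$-orbit-type stratum; once this is observed, the proof is a one-line application of the lemmas above.
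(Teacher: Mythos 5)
Your proof is correct and is essentially the paper's own argument: the paper proves this theorem in one line by combining Proposition \ref{prop:bottominjective} with Corollary \ref{cor:nonexistencesupp0} applied to each $M^p$ with $p\notin B$. Your additional verification of the nontransitivity hypothesis (transitivity would force $M^p=\Mr^p$ and hence $p\in B$) is a correct and worthwhile detail that the paper leaves implicit.
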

\begin{proof} This is just Proposition \ref{prop:bottominjective}, combined with Corollary \ref{cor:nonexistencesupp0} for every $M^p$.
\end{proof}

\section{Actions with face-acyclic orbit space}\label{sec:ActionsWithAcyclicOrbitSpace}

The goal of this section is to prove that actions with open-face-acyclic orbit space (see Definition \ref{dfn:ofaetc}) on orientable compact manifolds are Cohen-Macaulay, see Theorem \ref{thm:OFAisCM}. In Subsection \ref{sec:OFA} we investigate the general topological structure of actions with (almost) open-face-acyclic orbit space, such as the dimensions of the strata or the structure of the bottom stratum. In particular,  if $b$ is the dimension of the smallest occuring orbit, $M_{b+1}/T$ will be seen to be a connected graph, just as the $1$-skeleton of an equivariantly formal action satisfying the GKM conditions, see Subsection \ref{sec:b+1}.  In Subsections \ref{sec:Chang} and \ref{sec:OFAisCM} we finish the proof that actions with open-face-acyclic orbit space are Cohen-Macaulay, following ideas by Masuda and Panov \cite{MasudaPanov}, but with certain differences as mentioned in the introduction. Combined with a statement of Bredon \cite{Bredon}, our result characterizes equivariantly formal actions on orientable compact manifolds $M$ with $\dim M=2\dim T$ via the orbit space, see Subsection \ref{sec:equivformalcase}.

\subsection{Open- and closed-face-acyclicity}\label{sec:OFA}

For the following proposition, cf.~also the remark after Proposition 9.3 of \cite{MasudaPanov}.
\begin{prop}\label{prop:OFA+}
If the orbit space of an effective $T$-action on an oriented compact manifold $M$ is almost open-face-acyclic, then one of the following statements holds:
\begin{enumerate}
\item $B$ is connected.
\item The orbit space is open-face-acyclic, i.e., $B$ consists of finitely many isolated orbits. Moreover, they are all of the same dimension, i.e., there exists $b$ such that $M_b=B$. In addition, $M_{b+1}$ is connected.
\end{enumerate} 
\end{prop}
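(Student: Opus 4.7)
The plan is to assume $B$ is disconnected (otherwise (1) holds) and derive (2), by induction on $\dim M$. The main tools will be the injectivity $H^*_T(M)\hookrightarrow H^*_T(B)$ from Theorem~\ref{thm:OFA+injective}, the Krull-dimension identification via maximal isotropy algebras from \cite[Proposition 5.1]{FranzPuppe2003}, and Poincar\'e duality for the compact orientable orbifolds $B_i/T$.

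First I would observe that each component $B_i=M^{p_i}$ has even codimension $\geq 2$ in $M$: since $p_i\in B$, the slice representation of $T_{p_i}$ on $\nu_{p_i}$ has no $\mft_{p_i}$-fixed vectors, and a torus representation with no trivial summand decomposes into real $2$-dimensional subrepresentations. Consequently $M\setminus B$ is connected. Moreover, the bottom stratum of the restricted $T$-action on any $M^q$ is exactly $B\cap M^q$, which is non-empty, so every $M^q$ contains some component of $B$.

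The heart of the argument is to find, for each pair of distinct components $B_i,B_j$ of $B$, a non-regular $p\notin B$ with $M^p\neq M$ such that $B_i,B_j\subset M^p$. Given such $p$, the restricted $T$-action on the strictly smaller orientable compact manifold $M^p$ has an orbit space inheriting almost open-face-acyclicity (for $q\in M^p\setminus B$ the open face $\Mr^q$ is the same whether computed in $M^p$ or in $M$), and $B\cap M^p$ is disconnected by our choice of $p$. Applying the inductive hypothesis to $M^p$ (with the effective action of $T/T_{p}^0$), case~(2) of the conclusion must hold for $M^p$, so $B\cap M^p$ is a finite union of isolated $T$-orbits of a common dimension; in particular $B_i,B_j$ are single orbits of equal dimension. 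Running this over all pairs shows each $B_i$ is a single $T$-orbit and that all orbit dimensions agree. Combined with the Krull-dimension identification, this common dimension equals the minimum orbit dimension $b$ in $M$, giving $M_b=B$; Poincar\'e duality for $B_i/T$ forces $M^{p_i}/T$ to be a point, so the orbit space is open-face-acyclic. Finally, connectedness of $M_{b+1}$ follows by applying case~(2) of the induction to each face $M^p$ with $\dim Tp=b+1$ meeting two or more components of $B$: such $M^p$ is connected and contains at least two points of $B/T$, and $M_{b+1}$ is the union of $B$ with all such $M^p$, whose connectedness inherits from that of $M\setminus B$ by homotoping paths down the stratification using acyclicity of higher-dimensional open faces.

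The main obstacle will be producing the face $M^p$ containing two components of $B$: the connectedness of $M\setminus B$ is a purely homotopical statement, and to convert it into the required face-theoretic assertion one must analyze the decomposition $M\setminus B=\bigcup_i U_i$ with $U_i=\{q\in M:M^q\supset B_i\}$; non-trivial pairwise intersections $U_i\cap U_j$ are precisely the statements that some face contains both $B_i$ and $B_j$, and forcing such intersections will require careful use of the codimension-$2$ estimate together with the isotropy stratification.
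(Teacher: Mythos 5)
There is a genuine gap at the step you yourself flag as ``the main obstacle,'' and it is not a technical wrinkle but the heart of the matter. Your induction requires that any two components $B_i,B_j$ of $B$ lie in a common \emph{proper} face $M^p\subsetneq M$. This is false in general: for a cohomogeneity-one action with two singular orbits (which is exactly the situation the proposition must ultimately identify), the only $M^q$ containing both singular orbits is $M$ itself, so there is nothing to recurse on. Your covering $U_i=\{q: M^q\supset B_i\}$ does not help, because every regular point lies in every $U_i$ (its face is $M$), so $U_i\cap U_j\neq\emptyset$ only certifies the trivial common face $M$; to get a proper one you would need connectedness of the \emph{singular} set, not of $M\setminus B$, and connectedness of $\Ms$ is precisely what fails in the critical case.

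The missing idea is how almost open-face-acyclicity actually enters. The paper's argument picks a face $M^p$ minimal among those whose bottom stratum is disconnected, shows by a partition argument (grouping the faces of $\Ms^p$ according to which component of $B$ their bottom stratum is --- this is the correct analogue of your $U_i$, applied to $\Ms^p$) that $\Ms^p$ is then disconnected, and feeds this into the exact sequence $0\to\Hb^0(M^p)\to\Hb^0(\Ms^p)\to\Hb^1(M^p,\Ms^p)$ to get $\Hb^1(M^p,\Ms^p)=H^1_c(\Mr^p/T)\neq 0$. Poincar\'e duality for $\Mr^p/T$ together with $\Hb^*(\Mr^p)=\RR$ then forces $\dim M^p/T=1$: the action on $M^p$ has cohomogeneity one, so its bottom stratum consists of two orbits, of equal dimension because the regular orbits are $S^1$-bundles over them. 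This dichotomy (``disconnected bottom stratum $\Rightarrow$ cohomogeneity one'') is what makes the components of $B$ single orbits of a common dimension and simultaneously produces the chain of cohomogeneity-one faces proving $M_{b+1}$ connected; it applies whether or not $M^p$ is proper. Your proposal uses acyclicity only via Theorem \ref{thm:OFA+injective} and via heredity of the hypothesis to subfaces, neither of which constrains the cohomogeneity, so even granting the induction set-up the conclusion that each $B_i$ is an orbit would not follow. The closing appeal to ``homotoping paths down the stratification'' for connectedness of $M_{b+1}$ is likewise not an argument as it stands.
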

\begin{proof} Assume that $B$ is not connected. In particular, the action is not locally free and hence has at least two local isotropy types. 

Let $B_0$ be a component of $B$. We have to show that $B_0$ is an orbit. The components $M^p$ of isotropy manifolds are partially ordered by inclusion, the unique maximal element being $M$ itself. Since the bottom stratum of $M$ is disconnected by assumption, we may choose a minimal such component $M^p$ with the property that the bottom stratum of the $T$-action on $M^p$ is disconnected, but contains $B_0$. Let the other components of the bottom stratum of $M^p$ be denoted by $B_1,\ldots,B_s$.

We claim that $\Ms^p$ is disconnected. Assume this is not the case, and recall
\begin{equation*}
\Ms^p=\bigcup_{q\in M^p:\, \mft_p\neq \mft_q} M^q.
\end{equation*}
For every $q\in M^p$ with $\mft_p\neq \mft_q$, the bottom stratum of the $T$-action on $M^q$ is connected, and hence one of the $B_i$, $i=0,\ldots,s$. Further, for two such singular points $q_1,q_2\in M^p$ such that $M^{q_1}\cap M^{q_2}\neq \emptyset$, their respective bottom strata are, because of connectedness, contained in $M^{q_1}\cap M^{q_2}$ and coincide. Therefore, if we let 
\[
N_i:=\bigcup_{q:\, \text{the bottom stratum of }M^q \text{ is }B_i} M^q,
\]
then $\Ms^p=\bigcup_i N_i$ is a disjoint union into nonempty subsets, and hence $\Ms^p$ is disconnected. 

Consider now the first terms of the long exact sequence in basic cohomology of the pair $(M^p,\Ms^p)$:
\[
0\to \Hb^0(M^p)\to \Hb^0(\Ms^p)\to \Hb^1(M^p,\Ms^p)\to \ldots
\]
It follows that $\Hb^1(M^p,\Ms^p)\neq 0$. But since $\Hb^1(M^p,\Ms^p)=H^1_{c}(\Mr^p/T)$, and $\Mr^p/T$ satisfies Poincar\'e duality, the almost open-face-acyclic condition implies that the $T$-action on $M^p$ has cohomogeneity one. In particular, it has exactly two singular orbits, one of which is $B_0$. Because $T$ is a torus, the regular orbits are $S^1$-fibre bundles over the singular orbits, and hence the two singular orbits have equal dimension.

It follows that $B$ consists of isolated orbits, and whenever two of those orbits can be joined by a sequence of $M^p$'s such that the $T$-action on $M^p$ has cohomogeneity one (we say: they are \emph{linked}), they are of the same dimension. We still need to show that any two of those orbits are linked. If this was not the case, choose an arbitrary component $B_0$, and let $M^p$ be minimal with the property that the bottom stratum of $M^p$ has a component linked with $B_0$ and a component not linked with $B_0$. In other words, for all singular $q\in M^p$, the components of the bottom stratum of $M^q$ are either all linked with $B_0$ or all not linked with $B_0$. By an analogous argument as above, $\Ms^p$ is disconnected. But this implies that the $T$-action on $M^p$ has cohomogeneity one, which is a contradiction.
\end{proof}
\begin{ex}
An easy example for an action whose orbit space is almost open-face-acyclic but not open-face-acyclic, is the $S^1$-action on $M=S^3=\{(z,w)\mid |z|^2+|w|^2=1\}\subset \CC^2$ is given by $t\cdot (z,w)=(tz,w)$. The orbit space $M/S^1$ is the disk $D^2$, and $M^{S^1}=\partial D^2=S^1$.
\end{ex}

For the rest of the section, $b$ will denote the dimension of the smallest occuring orbit.

\begin{lem} \label{lem:extensionforms} Let $T$ act locally freely on a manifold $M$ (e.g.~the regular stratum of another $T$-action) such that $\Hb^2(M)=0$. Then for every $p\in M$, the map in cohomology $H^*(M)\to H^*(T)$ induced by the orbit map $T\to M; \, t\mapsto tp$, is surjective.
\end{lem}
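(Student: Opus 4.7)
The plan is to exploit that $H^*(T)$ is generated as a graded algebra by $H^1(T)$, reducing the problem to showing that the restriction map $H^1(M)\to H^1(T)$ is surjective. For each basis vector of $\mft^*$ I would produce a closed 1-form on $M$ whose pullback under the orbit map represents the corresponding Maurer--Cartan class on $T$. The hypothesis $\Hb^2(M)=0$ will enter through a low-degree Chern--Weil correction: connection-like 1-forms on $M$ have basic curvature, which vanishes cohomologically, so they can be modified by basic 1-forms to become closed.

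Concretely, first I would fix a $T$-invariant Riemannian metric on $M$ (by averaging) and a basis $\xi_1,\ldots,\xi_r$ of $\mft$ with corresponding fundamental vector fields $X_1,\ldots,X_r$, which are pointwise linearly independent because the action is locally free. Let $H\subset TM$ be the orthogonal complement of the vertical distribution and define $T$-invariant 1-forms $\theta_1,\ldots,\theta_r$ by $\theta_i(X_j)=\delta_{ij}$ and $\theta_i|_H=0$. Second, I would check that each $d\theta_i$ is basic: invariance is inherited from $\theta_i$, and $\iota_{X_j}d\theta_i=L_{X_j}\theta_i-d(\iota_{X_j}\theta_i)=0-d(\delta_{ij})=0$. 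Hence the class of $d\theta_i$ in $\Hb^2(M)=0$ vanishes, so $d\theta_i=d\beta_i$ for some basic 1-form $\beta_i$, and $\eta_i:=\theta_i-\beta_i$ is a closed 1-form on $M$. Third, I would pull $\eta_i$ back via $\varphi_p\colon T\to M$, $t\mapsto tp$: horizontality of $\beta_i$ gives $\varphi_p^*\beta_i=0$, and since $(d\varphi_p)_t$ sends the left translate of $\xi_j$ to $X_j|_{tp}$, we obtain $(\varphi_p^*\theta_i)(L_t\xi_j)=\delta_{ij}$. Thus $\varphi_p^*\eta_i$ is the $i$-th left-invariant Maurer--Cartan component on $T$, whose class generates the corresponding summand of $H^1(T)$. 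Letting $i$ vary finishes the argument.

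The main obstacle is largely conceptual: recognizing that $M$ carries a ``principal $T$-orbibundle'' structure for which the $\theta_i$ play the role of a connection, and that $\Hb^2(M)$ is the natural receptacle for the curvature classes $[d\theta_i]$. After this identification the argument is routine; the only delicate point is verifying that $d\theta_i$ is horizontal, which boils down to the single Cartan-formula computation above.
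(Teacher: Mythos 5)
Your proof is correct and follows essentially the same route as the paper: both reduce to surjectivity on $H^1$, build the connection one-forms $\theta_i$ from a $T$-invariant horizontal distribution, use the Cartan formula to see that $d\theta_i$ is basic, invoke $\Hb^2(M)=0$ to correct $\theta_i$ by a basic one-form to a closed form, and pull back along the orbit map to hit the left-invariant generators of $H^1(T)$. No substantive differences.
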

\begin{proof} It suffices to prove that $H^1(M)\to H^1(T)$ is surjective, as $H^*(T)$ is generated by $H^1(T)$. Fix a basis $X_1,\ldots,X_r$ of $\mft$, with dual basis $u_1,\ldots,u_r\in \mft^*$.  Let $\omega:TM\to \mft$ be a connection form, i.e., a $T$-invariant map such that for every $p\in M$, we have $\omega(X_i(p))=X_i$. Note that a choice of connection form is equivalent to the choice of a $T$-invariant horizontal distribution, e.g., the orthogonal complement of the orbit with respect to some $T$-invariant Riemannian metric on $M$. 

We obtain one-forms $\theta_i=u_i\circ \omega$ on $M$ (these are sometimes called connection forms, see e.g.~\cite[p.~23]{GuilleminSternberg}). We claim that $d\theta_i$ is a closed basic two-form. 
The $d\theta_i$ are clearly $T$-invariant, as $\omega$ is $T$-invariant. Thus,
\[
i_{X_j} d\theta_i = di_{X_j} (u_i\circ \omega)=d (u_i(X_j))=d\delta_{ij}=0,
\]
and hence $d\theta_i$ is a closed basic two-form. By our assumption it follows that the $d\theta_i$ are basic exact, i.e., there exist basic one-forms $\eta_i$ such that the $\theta_i-\eta_i$ are closed and thus define cohomology classes on $M$. The pull-back of $\theta_i-\eta_i$ via an orbit map $\varphi_p$ of a point $p$ is the left-invariant one-form given by $u_i$ since
\[
\varphi^*_p(\theta_i-\eta_i)(X_j)=u_i ( \omega (d\varphi_p (X_j)))= u_i(X_j)=\delta_{ij}.
\]
Since those left-invariant one-forms span $H^1(T)$, the claim follows.
\end{proof}
For $p\in M$, let $d_p=\dim M^p-\dim T\cdot p=\dim M^p/T$. Observe that for $q\in M^p$, we have $d_q\leq d_p$, and if $q\in \Ms^p$, even $d_q<d_p$.
\begin{prop}\label{prop:OFAd_p} If the orbit space of a $T$-action on an orientable manifold $M$ is open-face-acyclic, then for all $p\in M_i\setminus M_{i-1}$ we have $d_p=i-b$. 
\end{prop}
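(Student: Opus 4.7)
The plan is to prove the formula $d_p = i-b$ by induction on $i = \dim T\cdot p$, starting at $i = b$ and working upward. Throughout, I will exploit that every connected component $M^p$ of a subtorus fixed set in the orientable manifold $M$ is itself orientable, so that $\Mr^p/T$ is an orientable orbifold, and that $M^p/T$ inherits open-face-acyclicity from $M/T$ (its faces being a subset of those of $M/T$).

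For the base case $i = b$, every $q \in M^p$ satisfies $\mft_q \supseteq \mft_p$, so $\dim T\cdot q \leq b$; combined with the global lower bound $\dim T\cdot q \geq b$, this forces $\Mr^p = M^p$. Thus $M^p/T$ is a compact connected orientable orbifold of dimension $d_p$, and it is acyclic by open-face-acyclicity. Poincar\'e duality for compact orientable orbifolds then forces $\dim M^p/T = 0$, giving $d_p = 0 = b - b$.

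For the inductive step, assume the formula for all $j < i$ and take $p$ with $\dim T\cdot p = i$. For every $q \in \Ms^p$ we have $\dim T\cdot q < i$, so by induction $\dim M^q = 2\dim T\cdot q - b$. For the lower bound $d_p \geq i - b$, I would select a $q \in \Ms^p$ with $\dim T\cdot q = i-1$ (such a $q$ exists by the slice theorem at any singular point: codimension-one isotropy subalgebras containing $\mft_p$ are always realized in the slice representation whenever $\Ms^p \neq \emptyset$); then the normal bundle to $M^q$ in $M^p$ is a nontrivial representation of $T_q^0/T_p^0$ without fixed vectors, hence of even rank $\geq 2$, so $\dim M^p \geq \dim M^q + 2 = 2i - b$. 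For the upper bound, I would apply Proposition \ref{prop:OFA+} to the effective $T/T_p^0$-action on $M^p$. In Case (2), the bottom stratum is a finite union of isolated orbits of a common dimension $b_p$; applying the inductive hypothesis to such an orbit (for which $d$ vanishes) pins down $b_p = b$. In Case (1), the bottom is a single component $\Mr^{q_0}$ of one isotropy type, and induction applied to $q_0$ again matches the minimum orbit dimension to $b$. In either case, Poincar\'e duality on the acyclic orientable orbifold $\Mr^p/T$ identifies $\Hb^*(M^p,\Ms^p)$ with $\RR$ concentrated in degree $d_p$, and a chase through the long exact sequence of the pair $(M^p,\Ms^p)$, using that the top-dimensional stratum of $\Ms^p$ has dimension exactly $2i - 2 - b$ by induction, should force $d_p \leq i - b$.

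The main obstacle is the upper bound. The lower bound follows essentially from the even-codimensionality of subtorus fixed sets together with the inductive hypothesis, but the upper bound requires genuinely topological input: one must rule out ``oversized'' $M^p$, which amounts to showing that the acyclic open face $\Mr^p/T$ and the inductively understood boundary $\Ms^p/T$ fit together tightly, with no room for additional topology. I expect the cleanest route is the long exact sequence argument sketched above, where the key point is that Poincar\'e duality pins the fundamental class of $\Mr^p/T$ in degree $d_p$, and that the inductive description of the strata of $\Ms^p$—each of the predicted dimension $2j - b$—prevents $\Hb^{k}(M^p,\Ms^p)$ from being nonzero for $k > i - b$.
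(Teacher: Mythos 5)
Your base case and your lower bound are essentially sound: the existence of a point $q\in\Ms^p$ with $\dim T\cdot q=i-1$ does follow from the slice representation at any singular point, and the normal bundle of $M^q$ in $M^p$ has no $\mft_q$-fixed vectors, hence even rank $\geq 2$. The gap is exactly where you suspect it: the upper bound, and your long exact sequence argument for the pair $(M^p,\Ms^p)$ does not close. Suppose $d_p>i-b$. Poincar\'e duality gives $\Hb^*(M^p,\Ms^p)=H^*_c(\Mr^p/T)\cong\RR$ concentrated in degree $d_p$, and the inductive hypothesis gives $\Hb^k(\Ms^p)=0$ for $k>i-1-b$, in particular for $k=d_p-1$ and $k=d_p$. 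But then the long exact sequence only yields $\Hb^{d_p}(M^p)\cong\Hb^{d_p}(M^p,\Ms^p)\cong\RR$ --- a conclusion, not a contradiction. To turn this into a contradiction you would need $\Hb^{d_p}(M^p)=0$ for $d_p>0$, i.e., closed-face-acyclicity; but in the paper that is Proposition \ref{prop:OFACFA}, which is proved \emph{after} and \emph{using} the present proposition (via Corollary \ref{cor:OFAweights} and Lemma \ref{lem:normalbundlesplits}), so invoking it here would be circular. Nothing in your stated inputs (acyclicity of the open faces plus the inductive dimension count for the strata of $\Ms^p$) rules out an ``oversized'' $M^p$ carrying a nontrivial class in $\Hb^{d_p}(M^p)$.

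The paper's actual mechanism is different and uses $\Hb^2(\Mr^p)=0$ in a way that is not a dimension count. Lemma \ref{lem:extensionforms} shows that vanishing of $\Hb^2$ of the regular part makes the curvature forms $d\theta_i$ of a connection basic-exact, so the orbit map of any regular point induces a surjection from $H^1(\Mr^p)$ onto $H^1$ of the acting torus. Restricting to the normal sphere bundle $T\times_{T_q}S^\varepsilon(\nu_q M^q)$ of a stratum $M^q$ maximal in $\Ms^p$ (this bundle lies in $\Mr^p$), one finds that its $H^1$ can surject onto an $r$-dimensional space only if the normal sphere and $T_q/T_p$ are one-dimensional; hence $M^q$ has codimension exactly two in $M^p$ and $d_q=d_p-1$ (Lemma \ref{lem:OFAd_p}). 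The proposition then follows by induction along maximal chains, with this codimension-two statement playing the role you assigned to the long exact sequence. This Euler-class/holonomy obstruction lives in the interplay between $H^1(\Mr^p)$ and the normal data of the singular strata and is invisible to the basic cohomology of the pair $(M^p,\Ms^p)$, which is why the purely cohomological chase cannot replace it.
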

\begin{proof} The subsequent lemma implies that for every $q$ and $p$ such that $M^q\subset M^p$ and such that there is no $q'\in M^p$ with $M^q\subsetneq M^{q'}\subsetneq M^p$, we have $d_p=d_q+1$ (apply the lemma to the $T/T_p$-action on $M^p$). The claim follows by induction, since for $p\in M_b=B$ it is clear by Proposition \ref{prop:OFA+}.
\end{proof}
\begin{lem}\label{lem:OFAd_p} If the $T$-action is effective and $\Hb^2(\Mr)=0$, then for every $p\in \Ms$ such that there is no $q\in M$ with $M^p\subsetneq M^q\subsetneq M$, we have $d_p=\dim M/T-1$.
\end{lem}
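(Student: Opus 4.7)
\emph{Plan of proof.} The strategy is to show that the hypotheses force the slice representation at $p$ to be a single non-trivial irreducible $S^1$-representation on $\CC$; once this is established, $d_p=\dim M/T-1$ follows from an elementary dimension count. The first half of this structural conclusion is extracted from the maximality of $M^p$ alone, while the second uses $\Hb^2(\Mr)=0$ through Lemma \ref{lem:extensionforms}.

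By the slice theorem, a neighborhood of $Tp$ is equivariantly modelled on $T\times_{T_p}\Sigma$. Since $T_p\subset T$ is a closed subgroup of a torus, its identity component $H$ is a subtorus with Lie algebra $\mft_p$, and the part of $\Sigma$ normal to $\Sigma^H$ splits under $H$ into non-trivial two-dimensional real irreducibles $V_1,\ldots,V_m$ with characters $\chi_1,\ldots,\chi_m$. If $\dim\mft_p\geq 2$, each $\ker d\chi_i$ would be a proper non-zero subalgebra of $\mft_p$, and a generic $q\in V_i$ would satisfy $\{0\}\subsetneq\mft_q\subsetneq\mft_p$, producing an intermediate $M^p\subsetneq M^q\subsetneq M$, contrary to hypothesis. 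Therefore $\dim\mft_p=1$ and $H\cong S^1$. As a byproduct, the only isotropy algebras of points in a neighborhood of $Tp$ are $\mft_p$ and $\{0\}$, so a punctured neighborhood of $M^p$ already lies inside $\Mr$.

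The main step is to force $m=1$. Choose a regular $q\in\Mr$ in a single weight space $V_i$; the orbit $Hq\subset\Mr$ is a circle. By Lemma \ref{lem:extensionforms} the pullback $H^1(\Mr)\to H^1(T)$ along the $T$-orbit map of $q$ is surjective; composing with the canonical surjection $H^1(T)\to H^1(H)\cong\RR$ coming from the subtorus inclusion, and observing that the composition is the pullback along the $H$-orbit map $H\to\Mr$, $h\mapsto hq$, which factors through $H^1(Hq)\cong\RR$, we conclude that $[Hq]\neq 0$ in $H_1(\Mr;\RR)$. On the other hand, if $m\geq 2$ then the slice minus $M^p$ is homotopy equivalent to $S^{2m-1}$ and hence simply connected, so $Hq$ is null-homotopic in $\Mr$: contradiction. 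Hence $m=1$, giving $\dim M-\dim M^p=2$ and consequently $d_p=(\dim M-2)-(r-1)=\dim M/T-1$.

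The key obstacle is the global-to-local bridge in the main step: Lemma \ref{lem:extensionforms} is a global statement about $\Mr$, while the constraint we want to derive is about the local slice representation at $p$. The device that makes this work is homological---realizing the local circle $Hq$ as a class in $H_1(\Mr;\RR)$ detected by the global 1-forms produced by Lemma \ref{lem:extensionforms}---and pinning down this comparison cleanly, together with checking that $Hq$ indeed lies in $\Mr$ (which is where the first step pays off), is the crux of the argument.
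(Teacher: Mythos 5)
Your proof is correct, and it rests on the same key input as the paper's, namely Lemma \ref{lem:extensionforms}; but the argument is organized genuinely differently. The paper makes a single computation: it takes the normal sphere bundle $N=\exp(S^\varepsilon(\nu M^p)|_{Tp})$ over the whole orbit, notes that $H^1(N)\to H^1(T)$ must be surjective, writes $N$ as $(T'\times S^\varepsilon(\nu_pM^p))/(T_p/T_p^0)$, and reads off \emph{both} $\dim T_p=1$ and $\dim\nu_pM^p=2$ at once from the fact that $H^1$ of that space is too small otherwise. You instead split the conclusion in two. Your first step --- that $\dim\mft_p=1$ already follows from the maximality of $M^p$ together with effectiveness, with no cohomological hypothesis --- is a correct observation that the paper does not isolate: if $\dim\mft_p\geq 2$ each weight kernel $\ker d\chi_i$ is a nonzero proper subalgebra and a generic point of $V_i$ produces an intermediate $M^q$ (here you implicitly use that $M$ is connected and the action effective, so that $M^{\mft_q}\neq M$ and so that the weights exist at all, i.e.\ $m\geq 1$; worth a word). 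Your second step then replaces the paper's computation of $H^1(N)$ by a homological detection argument confined to a single slice: the circle $T_p^0q$ is nonzero in $H_1(\Mr;\RR)$ by Lemma \ref{lem:extensionforms} (via the surjection $H^1(T)\to H^1(T_p^0)$ and the finite covering $T_p^0\to T_p^0q$), yet it lies in the punctured slice $\Sigma\setminus\Sigma^{T_p^0}\simeq S^{2m-1}$, which is simply connected if $m\geq 2$. Both routes are of comparable length; yours has the small advantage of making explicit which hypothesis is responsible for which part of the conclusion, while the paper's is more uniform and avoids having to argue separately that the punctured slice lies in $\Mr$ (which your step 1 supplies).
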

\begin{proof} By Lemma \ref{lem:extensionforms}, the condition on the basic cohomology implies that for any regular point $q$, the orbit map of $q$ induces a surjective map $H^*(\Mr)\to H^*(T)$. 
If $p$ is such as in the statement of the lemma, then we can choose $\varepsilon$ so small that $N=\exp( S^\varepsilon(\nu M^p)|_{T\cdot p})$, where $S^\varepsilon(\nu M^p)$ is the sphere bundle of radius $\varepsilon$ in the normal bundle of $M^p$, is contained in the regular stratum. We furthermore may assume that $\exp$, restricted to $S^\varepsilon(\nu M^p)|_{T\cdot p}$,  is a diffeomorphism onto its image. Then, the map $H^*(\Mr)\to H^*(T)$, induced by the orbit map of some point in $N$, factors through $H^*(N)$, and we obtain a surjective map $H^*(N)\to H^*(T)$. On the other hand, 
\[
S^\varepsilon(\nu M^p)|_{T\cdot p}=T\times_{T_p} S^\varepsilon(\nu_p M^p) = (T\times_{T_p^0} S^\varepsilon(\nu_p M^p))/(T_p/T_p^0).
\]
As $T_p/T_p^0$ is a finite group and we are dealing with $\RR$-coefficients, we have $H^1(N)=H^1(T\times_{T_p^0} S^\varepsilon(\nu_pM^p))^{T_p/T_p^0}$, see e.g.~Borel et al.~\cite[Cor.~III.2.3]{Borel}. If $T'$ is some complement of the identity component $T_p^0$ in $T$, i.e., $T=T_p^0\times T'$, we have $T\times_{T_p^0} S^\varepsilon(\nu_pM^p)=T'\times S^\varepsilon(\nu_pM^p)$. Since $T'$ is strictly lower dimensional than $T$, it follows that $H^1(N)$ can only map surjectively onto the $r$-dimensional space $H^1(T)$ if the sphere $S^\varepsilon(\nu_pM^p)$ and $T_p$ are one-dimensional. But this implies that $M^p$ has codimension two in $M$, and thus $d_p=\dim M-2-\dim T\cdot p = \dim M-\dim T -1 = \dim M/T -1$, which finishes the proof.
\end{proof}

\begin{cor}\label{cor:OFArelcohom} If the orbit space of a $T$-action on a compact orientable manifold $M$ is open-face-acyclic, then 
\begin{enumerate}
\item $\Hb^k(M_i,M_{i-1})=0$ for $k\neq i-b$. The dimension of $\Hb^{i-b}(M_i,M_{i-1})$ is equal to the number of components of $M_i\setminus M_{i-1}$.
\item For $j<i$, the natural map $\Hb^{j-b}(M)\to \Hb^{j-b}(M_i)$ is an isomorphism.
\item For $j>i$, $\Hb^{j-b}(M_i)=0$.
\end{enumerate}
\end{cor}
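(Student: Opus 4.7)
The plan is to establish (1) directly from the description $\Hb^*(M_i,M_{i-1}) = H^*_c(M_{(i)}/T)$ recorded in the Preliminaries, and then to deduce (2) and (3) as formal consequences of (1) by repeated application of the long exact sequence of the pair $(M_i,M_{i-1})$ together with induction on $i$.

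For (1), I would decompose $M_{(i)}$ into its connected components. Each such component has the form $\Mr^{p_j}$ for some $p_j$, and under the open-face-acyclic hypothesis these are already connected because $\Hb^0(\Mr^{p_j}) = \RR$. Proposition \ref{prop:OFAd_p} gives $d_{p_j} = i-b$, so $\Mr^{p_j}/T$ is an orbifold of dimension $i-b$. Since $M^{p_j}$ is a connected component of the fixed-point set of the subtorus of $T$ with Lie algebra $\mft_{p_j}$, and the normal bundle of this fixed-point set carries a canonical complex structure coming from the isotropy representation, $M^{p_j}$ is orientable; the $T$-action on $\Mr^{p_j}$ factors through a locally free action of the connected torus $T/T_{p_j}^0$, so $\Mr^{p_j}/T$ is an orientable orbifold. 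Poincar\'{e} duality for basic cohomology then yields
\[
H^k_c(\Mr^{p_j}/T) \cong \Hb^{i-b-k}(\Mr^{p_j}),
\]
which by open-face-acyclicity is $\RR$ for $k=i-b$ and $0$ otherwise. Summing over the components of $M_{(i)}$ delivers both the vanishing and the dimension count asserted in (1).

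For (3), I would induct on $i$. The base case $i=b$ follows from Proposition \ref{prop:OFA+}: $M_b$ is either a single orbit or a finite disjoint union of isolated orbits, so $M_b/T$ is discrete and $\Hb^{>0}(M_b)=0$. For the inductive step, the fragment
\[
\Hb^{j-b}(M_i,M_{i-1}) \to \Hb^{j-b}(M_i) \to \Hb^{j-b}(M_{i-1})
\]
of the long exact sequence has vanishing outer terms when $j>i$: the leftmost by (1), the rightmost by the inductive hypothesis. For (2), whenever $j<i<i'$ both $\Hb^{j-b}(M_{i'},M_{i'-1})$ and $\Hb^{j-b+1}(M_{i'},M_{i'-1})$ vanish by (1), so each map $\Hb^{j-b}(M_{i'}) \to \Hb^{j-b}(M_{i'-1})$ is an isomorphism; composing these from $i'=r$ down to $i'=i+1$ identifies $\Hb^{j-b}(M)$ with $\Hb^{j-b}(M_i)$.

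The main obstacle is the Poincar\'{e} duality step in (1), which requires verifying that $\Mr^{p_j}/T$ is an orientable orbifold and invoking the noncompact version of duality correctly, including the identification of $H^0$ of the quotient with $\RR$ at the level of connected components. Once that point is in place, the remainder of the argument reduces to routine exact-sequence bookkeeping driven by the inductive application of the long exact sequences of pairs.
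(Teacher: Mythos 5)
Your argument is correct and follows essentially the same route as the paper: part (1) via Proposition \ref{prop:OFAd_p} and Poincar\'e duality for the orientable orbifolds $\Mr^{p_j}/T$, and parts (2) and (3) by chaining the long exact sequences of the pairs $(M_{i'},M_{i'-1})$ using the vanishing from (1). The extra details you supply (orientability of $M^{p_j}$ via the complex structure on the normal bundle, and running (3) as an upward induction from $M_b$ rather than composing down to $M_{b-1}=\emptyset$) are harmless elaborations of the same proof.
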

\begin{proof}
The first part follows from Proposition \ref{prop:OFAd_p} because every $\Mr^p/T$ satisfies Poincar\'{e} duality. 
For the second part, write $\Hb^{j-b}(M)\to \Hb^{j-b}(M_i)$ as the composition
\[
\Hb^{j-b}(M)\to \Hb^{j-b}(M_{r-1})\to \ldots\to  \Hb^{j-b}(M_{i+1})\to \Hb^{j-b}(M_i).
\]
Using the exact cohomology sequences of the respective pairs, the first part of the corollary implies that each of those maps is an isomorphism if $j<i$. 

The same argument gives that each map in the composition
\[
\Hb^{j-b}(M_i)\to \Hb^{j-b}(M_{i-1})\to\ldots \to \Hb^{j-b}(M_b) \to \Hb^{j-b}(M_{b-1})=0
\]
is an isomorphism if $j>i$, hence the third part follows.
\end{proof}

\begin{cor} \label{cor:dimMdimTb} If the orbit space of an effective $T$-action on a compact orientable manifold $M$ is open-face-acyclic, then $\dim M= 2\dim T - b$.
\end{cor}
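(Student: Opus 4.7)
The plan is to extract the dimension formula directly from Proposition \ref{prop:OFAd_p} by applying it at a regular point. First I would pick a regular $p\in M$ (equivalently, $p\in \Mr$, which is open and dense in each connected component). Since the action is effective, the infinitesimal isotropy at a regular point is trivial, i.e.\ $\mft_p=0$, so $p$ lies in $M_r\setminus M_{r-1}$ and its orbit has dimension $r$. Proposition \ref{prop:OFAd_p}, applied with $i=r$, then yields $d_p=r-b$.

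Next I would unwind the definition of $M^p$ and $d_p$. By construction, $M^p$ is the connected component of $M^{\mft_p}=M^{\{0\}}=M$ that contains $p$; in particular $\dim M^p$ equals the dimension of the ambient connected component of $M$ containing $p$. Since $d_p=\dim M^p-\dim T\cdot p=\dim M^p-r$, combining with the previous step gives $\dim M^p=2r-b$. Because $\Mr$ meets every connected component of $M$, and each component is itself a compact orientable $T$-manifold with open-face-acyclic orbit space, this argument applies uniformly, giving $\dim M=2r-b=2\dim T-b$.

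No real obstacle arises at this stage: Proposition \ref{prop:OFAd_p} already packages all the substantive geometric content, namely the inductive dimension count that increases $d_p$ by exactly one whenever one passes from $M^q$ to a minimally larger $M^p$ (via Lemma \ref{lem:OFAd_p}, whose proof uses the cohomological extension argument of Lemma \ref{lem:extensionforms} together with Poincar\'e duality on the quotient of $\Mr$), and the structural description of the bottom stratum furnished by Proposition \ref{prop:OFA+} which starts the induction. The corollary itself is then essentially a one-line bookkeeping observation extracted from these prior results.
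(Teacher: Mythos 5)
Your proposal is correct and follows exactly the paper's own argument: apply Proposition \ref{prop:OFAd_p} at a regular point (where effectiveness gives orbit dimension $r$, hence $d_p=r-b$) and combine with $\dim M=\dim M/T+\dim T$. The extra care you take about connected components is harmless but not needed beyond what the paper states.
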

\begin{proof} Proposition \ref{prop:OFAd_p} implies $d_p=\dim T-b$ for regular $p$. Thus,  $\dim M=\dim M/T+\dim T=2\dim T-b$.
\end{proof}

\begin{cor} \label{cor:OFAweights} If the orbit space of an effective $T$-action on an orientable compact manifold $M$ is open-face-acyclic, then for all $p\in M$, we have $\dim \nu_p M^p=2\dim T_p$. Consequently, the natural $T_p$-representation on the normal space $\nu_p M^p$ has exactly $\dim T_p$ weights.
\end{cor}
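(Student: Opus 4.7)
The plan is essentially a dimension count: combine Proposition \ref{prop:OFAd_p} with Corollary \ref{cor:dimMdimTb} to compute $\dim\nu_pM^p$, and then read off the number of weights from the structure of real representations of a torus. I do not expect any step to present a serious obstacle.

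First I would fix $p\in M$, set $i=\dim T\cdot p=\dim T-\dim T_p$, and note that $p\in M_i\setminus M_{i-1}$. Proposition \ref{prop:OFAd_p} then gives $d_p=i-b$, hence
\[
\dim M^p=\dim T\cdot p+d_p=i+(i-b)=2i-b.
\]
Subtracting this from $\dim M=2\dim T-b$, which is the content of Corollary \ref{cor:dimMdimTb}, produces
\[
\dim\nu_pM^p=(2\dim T-b)-(2i-b)=2(\dim T-i)=2\dim T_p,
\]
which is the first assertion.

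For the second part, I would use the identification of $\nu_pM^p=T_pM/T_pM^p$ as a real $T_p$-representation. Because $M^p$ is the connected component of $M^{\mft_p}$ through $p$, its tangent space $T_pM^p$ is precisely the $\mft_p$-fixed subspace of $T_pM$, so the slice representation on $\nu_pM^p$ has no nonzero $\mft_p$-invariant vector. Every real representation of a torus splits as a direct sum of trivial one-dimensional pieces and two-dimensional irreducibles corresponding to nonzero weights $\alpha\in\mft_p^*$. With no trivial summand available, the dimension identity $\dim\nu_pM^p=2\dim T_p$ forces $\nu_pM^p$ to decompose into exactly $\dim T_p$ real two-dimensional weight spaces, giving $\dim T_p$ weights when counted with multiplicity.
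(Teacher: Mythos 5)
Your dimension count for the first assertion is exactly the paper's: for $p\in M_i\setminus M_{i-1}$ one has $\dim T_p=\dim T-i$ and $\dim M^p=d_p+i=2i-b$ by Proposition \ref{prop:OFAd_p}, and subtracting from $\dim M=2\dim T-b$ (Corollary \ref{cor:dimMdimTb}) gives $\dim\nu_pM^p=2\dim T_p$. Nothing to add there.

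For the second assertion you diverge from the paper, and the divergence leaves a gap. You rule out trivial summands of the slice representation by observing that $T_pM^p=(T_pM)^{\mft_p}$, and you conclude that $\nu_pM^p$ splits into $\dim T_p$ two-dimensional weight spaces, i.e.\ that there are $\dim T_p$ weights \emph{counted with multiplicity}. That much is correct, but it does not exclude the possibility that two of these two-planes carry the same weight, or more generally that the weights are linearly dependent; so it does not yield $\dim T_p$ \emph{distinct} weights, and it makes no use of effectiveness, which the paper invokes at exactly this point. The missing step is: if $\mfk=\bigcap_i\ker\alpha_i\subset\mft_p$ were nonzero, the corresponding connected subgroup would act trivially on $T_pM=T_pM^p\oplus\nu_pM^p$ (trivially on the first summand since $\mfk\subset\mft_p$, on the second since every weight kills $\mfk$), hence by the slice theorem trivially on a neighborhood of $p$, hence on the whole component of $M$ through $p$, contradicting effectiveness. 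Therefore the at most $\dim T_p$ weights span the $\dim T_p$-dimensional space $\mft_p^*$, so there are exactly $\dim T_p$ of them and they are linearly independent. This stronger conclusion is what the paper actually uses later --- for instance in the identity $\mft_p=\bigcap\ker\gamma_i$ in the discussion preceding Lemma \ref{lem:normalbundlesplits}, and in the proof of Proposition \ref{prop:OFACFA}, where the $T_q$-orbits in $\nu_q\Mr^p$ are claimed to be products of circle orbits --- so you should add the effectiveness argument rather than stop at the count with multiplicity.
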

\begin{proof} For $p\in M_i$, we have $\dim T_p=\dim T-i$ and thus
$\dim \nu_pM^p = 2\dim T-b - \dim M^p = 2\dim T -b -(d_p+i)=2\dim T_p.$ It follows that the $T_p$-representation has at most $\dim T_p$ many weights. Because of effectiveness, the second claim follows.
\end{proof}

For any $q\in M^p$, the isotropy representation at $q$ induces $T_q$-representations on the tangent and normal spaces $T_qM^p$ and $\nu_qM^p$. We have $\nu_qM^q=\nu_qM^p\oplus (\nu_qM^q\cap T_qM^p)$ as $T_q$-modules. Corollary \ref{cor:OFAweights} implies that $\nu_qM^p=\bigoplus_{i=1}^{\dim \mft_p}V_{\beta_i}$ and $\nu_qM^q\cap T_qM^p=\bigoplus_{i=1}^{\dim \mft_q-\dim \mft_p} V_{\gamma_i}$, where the $V_{\beta_i}$ and $V_{\gamma_i}$ are the weight spaces of the respective weights $\beta_i,\gamma_i\in \mft_q^*$.
Because $\mft_p\subset \mft_q$ acts trivially on $\nu_qM^q\cap T_qM^p$, we have $\mft_p=\bigcap_{i=1}^{\dim \mft_q-\dim \mft_p}\ker \gamma_i$. The restriction map $\beta_i\mapsto \left.\beta_i\right|_{\mft_p}$ is a one-to-one-correspondence between the weights of the $T_q$- and the $T_p$-representations on $\nu_qM^p$.  The weights are constant along $M^p$ in the following sense:
\begin{lem}\label{lem:normalbundlesplits}
The weights of the $T_p$-representation on $\nu_qM^p$, coincide with the weights $\alpha_i$ of the $T_p$-representation on $\nu_pM^p$. Moreover, the normal bundle $\nu M^p$ splits equivariantly as $\nu M^p=\bigoplus_{i=1}^k V_{\alpha_i}$.
\end{lem}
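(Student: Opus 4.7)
The plan is to exploit that $T_p \subseteq T_q$ for every $q \in M^p$, so that $T_p$ acts on each fiber $\nu_q M^p$ via the isotropy representation, turning $\nu M^p \to M^p$ into a $T$-equivariant real vector bundle on which $T_p$ acts fiberwise. Since a torus acts on a real vector space as a direct sum of (real) weight spaces, the fiberwise decomposition will produce the desired equivariant splitting as soon as one knows that the weights and their multiplicities are globally constant along the connected manifold $M^p$.

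First I would verify this constancy locally: around any $q_0 \in M^p$, the equivariant tubular neighborhood theorem (or the slice theorem applied to the orbit $T\cdot q_0 \subseteq M^p$) identifies $\nu M^p$, near $T\cdot q_0$, with an associated bundle $T\times_{T_{q_0}} E$ for some $T_{q_0}$-module $E$. The $T_p$-weights occurring (with multiplicities) in $\nu_q M^p$ for $q$ in this neighborhood then coincide with those of $E$ viewed as a $T_p$-module via the inclusion $T_p \subseteq T_{q_0}$, and are in particular locally constant in $q$. Since these weights lie in the discrete character lattice of $T_p$ and $M^p$ is connected, they are constant on all of $M^p$.

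Therefore the $T_p$-weights appearing in $\nu_q M^p$ agree, with multiplicities, with those in $\nu_p M^p$, which by Corollary \ref{cor:OFAweights} are exactly $\alpha_1,\ldots,\alpha_k$ with $k=\dim \mft_p$, each sitting in a $2$-dimensional real weight space. This gives the first assertion. For the splitting, I would define $V_{\alpha_i} \subseteq \nu M^p$ to be the subbundle whose fiber over $q \in M^p$ is the $\alpha_i$-weight space of the $T_p$-action on $\nu_q M^p$; the local slice-model above shows that each $V_{\alpha_i}$ is a $T$-invariant subbundle of constant rank, and the fiberwise weight decomposition yields $\nu M^p = \bigoplus_{i=1}^k V_{\alpha_i}$ as $T$-equivariant bundles. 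The only delicate point is the globalization from the slice picture to all of $M^p$, which is exactly where connectedness and the discreteness of the character lattice are used.
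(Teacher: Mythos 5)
Your argument is correct and is essentially the paper's own proof: the paper simply defers to Duflot's Proposition~1 (Atiyah's Proposition~1.6.2 adapted to tori), whose standard proof is exactly what you give — local constancy of the $T_p$-weights via the slice/equivariant local triviality, globalized using connectedness of $M^p$ and discreteness of the character lattice, with Corollary~\ref{cor:OFAweights} supplying that the weight spaces are $2$-planes. Your writeup just makes explicit the details that the citation hides.
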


\begin{proof} This is essentially Proposition 1 of \cite{Duflot}, which is an extended version of Proposition 1.6.2 of \cite{Atiyah} for tori instead of finite groups. 
\end{proof}

\begin{prop} \label{prop:OFACFA} If the orbit space of a $T$-action on an orientable compact manifold $M$ is open-face-acyclic, then it is also closed-face-acyclic, i.e., $\Hb^*(M^p)=\RR$ for all $p\in M$. In particular, $\Hb^*(M)=\RR$.
\end{prop}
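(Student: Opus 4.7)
The plan is to prove the statement by strong induction on $\dim M^p$, ordering the isotropy submanifolds by inclusion. The base case, where $M^p = T\cdot p$ is a single orbit (i.e.\ $d_p = 0$), is immediate since $M^p/T$ is a point and $\Hb^*(M^p) = \RR$. The crucial structural observation enabling the inductive step is that the $T/T_p^0$-action on $M^p$ itself has open-face-acyclic orbit space: its faces are exactly the $M^q/T$ for $q\in M^p$, which form a subposet of the faces of $M/T$. Hence all results of Subsection \ref{sec:OFA}---notably Lemma \ref{lem:OFAd_p} and Corollaries \ref{cor:OFArelcohom} and \ref{cor:dimMdimTb}---apply verbatim to this induced action, with filtration $B^p = (M^p)_{b^p} \subset \cdots \subset (M^p)_{r'} = M^p$ where $r' = \dim(T/T_p^0)$ and $b^p$ is the minimal orbit dimension in $M^p$.

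For the inductive step, fix $p$ with $d_p \geq 1$ and assume $\Hb^*(M^q) = \RR$ whenever $M^q \subsetneq M^p$. I would analyze the long exact sequence of the pair $(M^p,\Ms^p)$ in basic cohomology. Two inputs make the sequence tractable. First, excision combined with orbifold Poincaré duality applied to the locally free, oriented, open-face-acyclic $d_p$-dimensional quotient orbifold $\Mr^p/T$ yields $\Hb^*(M^p,\Ms^p) \cong H^*_c(\Mr^p/T) \cong \RR$ concentrated in degree $d_p$. Second, Corollary \ref{cor:OFArelcohom}(3) applied to the $T/T_p^0$-action on $M^p$ gives $\Hb^k(\Ms^p) = \Hb^k((M^p)_{r'-1}) = 0$ for all $k\geq d_p$, and likewise $\Hb^k(M^p) = 0$ for $k > d_p$. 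With these vanishings in place, the long exact sequence reduces the target $\Hb^*(M^p) = \RR$ to the assertion $\Hb^*(\Ms^p) \cong \Hb^*(S^{d_p-1})$: such sphere-like cohomology forces the connecting homomorphism $\Hb^{d_p-1}(\Ms^p) \to \Hb^{d_p}(M^p,\Ms^p) = \RR$ to be an isomorphism, killing both $\Hb^{d_p-1}(M^p)$ and $\Hb^{d_p}(M^p)$, while the isomorphisms $\Hb^k(M^p) \cong \Hb^k(\Ms^p)$ for $k \leq d_p - 2$ supply $\Hb^0(M^p) = \RR$ and zero in intermediate degrees.

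The main obstacle is therefore identifying $\Hb^*(\Ms^p)$ with $\Hb^*(S^{d_p-1})$. By Lemma \ref{lem:OFAd_p}, $\Ms^p$ is covered by its codimension-one faces $F_1, \ldots, F_m$, all of dimension $d_p - 1$, each acyclic by the inductive hypothesis; moreover every nonempty intersection $F_I = \bigcap_{i\in I} F_i$ is a finite disjoint union of strictly smaller isotropy submanifolds of $M^p$, hence componentwise acyclic by induction. A Mayer-Vietoris spectral sequence for this closed cover then collapses at $E_2$ (since the $E_1$-page is concentrated in the row $t = 0$) and identifies $\Hb^*(\Ms^p)$ with the \v{C}ech cohomology of the nerve of $\{F_i\}$, weighted by connected components of multi-intersections. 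Matching this combinatorial complex with $\Hb^*(S^{d_p-1})$ is the crux of the argument: this is where the slice theorem and Corollary \ref{cor:OFAweights} enter essentially, equipping $M^p/T$ with a manifold-with-corners structure whose boundary is $\Ms^p/T$, and where the orientability of $M^p$ (inherited from $M$ via the fact that the normal bundle of $M^p$ is a complex $T_p^0$-module) produces a generating fundamental class in $\Hb^{d_p-1}(\Ms^p)$ that pairs nontrivially with the generator of $\Hb^{d_p}(M^p,\Ms^p)$.
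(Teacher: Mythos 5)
Your reduction is sound up to its final step: the induction over the face poset, the identification $\Hb^*(M^p,\Ms^p)\cong H^*_c(\Mr^p/T)\cong\RR$ concentrated in degree $d_p$ via Poincar\'e duality for the orientable orbifold $\Mr^p/T$, and the observation that the long exact sequence of $(M^p,\Ms^p)$ finishes the induction once one knows $\Hb^*(\Ms^p)\cong\Hb^*(S^{d_p-1})$ are all correct. The problem is that this last claim --- which you yourself flag as the crux --- is never proved, and it cannot be extracted from the inductive hypothesis alone. The Mayer--Vietoris spectral sequence does identify $\Hb^*(\Ms^p)$ with the cohomology of the nerve of the facet cover (weighted by components of multi-intersections), but acyclicity of each face and of each component of each intersection constrains only the $E_1$-page, not the global homotopy type of that nerve: a priori it could be any finite complex. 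Knowing that it is a cohomology $(d_p-1)$-sphere is, via the very exact sequence you set up (equivalently, via Poincar\'e--Lefschetz duality for $M^p/T$), logically equivalent to the acyclicity of $M^p/T$, i.e.\ to the statement being proved; so the argument is circular at its crux. The appeal to a manifold-with-corners structure and a ``generating fundamental class in $\Hb^{d_p-1}(\Ms^p)$'' does not break the circle: such a class would only show that $\Hb^{d_p-1}(\Ms^p)\to\Hb^{d_p}(M^p,\Ms^p)$ is surjective, which kills $\Hb^{d_p}(M^p)$ but says nothing about $\Hb^{d_p-1}(M^p)$ or about the vanishing of $\Hb^k(\Ms^p)$ in intermediate degrees.

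The paper's proof avoids this global combinatorial question entirely by a local argument. It shows that each restriction $\Hb^*(M\setminus M_{i-1})\to\Hb^*(M\setminus M_i)$ is an isomorphism: by Corollary \ref{cor:OFAweights} and Lemma \ref{lem:normalbundlesplits} the quotient of a tubular neighborhood of a component $\Mr^p$ of $M_{(i)}$ is homeomorphic to $\Mr^p/T\times\RR^k_{\geq 0}$, with $\Mr^p/T$ sitting over the vertex $\{0\}$, and deleting the vertex from the orthant does not change the homotopy type. Composing these isomorphisms gives $\Hb^*(M)\cong\Hb^*(\Mr)=H^*(\Mr/T)=\RR$ directly from the acyclicity of the single open face $\Mr/T$, with no induction over faces and no analysis of $\Ms$. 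If you want to salvage your route, you would need an independent proof that the weighted nerve of the facet cover of $\Ms^p$ is a cohomology sphere; the local product structure just described is the natural tool for that as well, at which point the detour through the boundary becomes unnecessary.
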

\begin{proof} We show that the map $\Mr\to M$ induces an isomorphism on basic cohomology by regarding it as the composition $\Mr=M\setminus M_{r-1}\to \ldots \to M\setminus M_b \to M$. To do this, we have to show that for all $i$, the map $\Hb^*(M\setminus M_{i-1})Ê\to \Hb^*(M\setminus M_{i})$ is an isomorphism. Choose a point $p$ in each connected component of $M_i\setminus M_{i-1}$, together with disjoint neighborhoods $U_p$ of $\Mr^p$ that have $\Mr^p$ as strong equivariant deformation retracts, and are diffeomorphic to the normal bundles $\nu \Mr^p$.
\[\tiny{
\xymatrix@R=20pt@C=10pt{
\Hb^*(M\setminus M_{i-1},M\setminus M_{i}) \ar[d] \ar[r] & \Hb^*(M\setminus M_{i-1}) \ar[d] \ar[r] & \Hb^*(M\setminus M_i) \ar[d] \ar[r] & \Hb^{*+1}(M\setminus M_{i-1},M\setminus M_{i}) \ar[d] \\
\bigoplus \Hb^*(U_p,U_p\setminus \Mr^p)\ar[r]  & \bigoplus \Hb^*(U_p) \ar[r] & \bigoplus \Hb^*(U_p\setminus \Mr^p) \ar[r] & \bigoplus \Hb^{*+1}(U_p,U_p\setminus \Mr^p)   } }
\]
In the diagram, the vertical maps on the sides are isomorphisms by excision. Thus, we need to show that the maps $\Hb^*(U_p)\to \Hb^*(U_p\setminus \Mr^p)$ are isomorphisms, which by our choice of $U_p$ amounts to show that for every $p\in M$, the spaces $(\nu \Mr^p)/T$ and $(\nu \Mr^p \setminus \Mr^p)/T$ are homotopy equivalent.

Let $\alpha_i$, $i=1\ldots k$ be the weights of the normal bundle $\nu \Mr^p=\bigoplus V_{\alpha_i}$ as in Lemma \ref{lem:normalbundlesplits}. Any vector $v\in \nu_q\Mr^p$ can be uniquely written  as $v=\sum v_i$, where $v_i\in V_{\alpha_i}(q)$. Because the $V_{\alpha_i}$ are $T$-invariant, this defines a $T$-invariant map
\[
r:\nu \Mr^p\to \RR^k_{\geq 0};\quad v\mapsto (||v_1||,\ldots,||v_k||).
\]
It follows that the map
\[
(\nu \Mr^p)/T \to \Mr^p/T \times \RR^k_{\geq 0};\quad T\cdot v \mapsto (T\cdot q,r(v))
\]
is well-defined. It is clearly surjective, and injectivity follows because the $T_q$-orbits in $\nu_q\Mr^p$ are products of $S^1$-orbits in $V_{\alpha_i}(q)$ by Corollary \ref{cor:OFAweights}.
Noting that under this map, $\Mr^p/T$ corresponds to $\Mr^p/T\times \{0\}$, the desired homotopy equivalence follows.
\end{proof}

\begin{prop}\label{prop:basicABsequence} If the orbit space of a $T$-action on an orientable compact manifold $M$ is open-face-acyclic, then the sequence 
$$0\to \Hb^*(M)\to \Hb^*(M_b)\overset{\partial_b}{\to} \Hb^*(M_{b+1},M_b)\overset{\partial_{b+1}}{\to} \ldots \overset{\partial_{r-1}}{\to} \Hb^* (M_r,M_{r-1})\to 0,$$
where the first map is induced by the inclusion, is exact.
\end{prop}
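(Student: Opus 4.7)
The plan is to apply the cohomological spectral sequence of the filtration
$\emptyset = M_{b-1} \subset M_b \subset M_{b+1} \subset \cdots \subset M_r = M$
(viewed via $\Hb^* = H^*(\cdot/T)$ as a filtration of $M/T$ by closed subspaces), combined with Proposition \ref{prop:vorneexakt}. The associated spectral sequence will have
$$E_1^{p,q} = \Hb^{p+q}(M_{b+p},M_{b+p-1})$$
and converge to $\Hb^{p+q}(M)$, with $d_1 \colon E_1^{p,q} \to E_1^{p+1,q}$ equal to the connecting homomorphism of the triple $(M_{b+p+1},M_{b+p},M_{b+p-1})$, which is precisely the map $\partial_{b+p}$ appearing in the Atiyah--Bredon sequence.

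The first key step is to invoke Corollary \ref{cor:OFArelcohom}(1), according to which $\Hb^k(M_i,M_{i-1})$ vanishes unless $k = i-b$; substituting $i = b+p$ this gives $E_1^{p,q} = 0$ for $q \neq 0$. Thus the $E_1$-page is concentrated on the single row $q=0$, all higher differentials $d_r$ with $r \geq 2$ vanish for bidegree reasons, and $E_2 = E_\infty$.

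Next, using Proposition \ref{prop:OFACFA}, which says $\Hb^*(M) = \mathbb R$ is concentrated in degree $0$, the abutment forces $E_\infty^{0,0} \cong \mathbb R$ and $E_\infty^{p,q} = 0$ for $(p,q) \neq (0,0)$. Translated back, the row-$q{=}0$ complex
$$\Hb^0(M_b) \overset{\partial_b}{\to} \Hb^1(M_{b+1},M_b) \overset{\partial_{b+1}}{\to} \cdots \overset{\partial_{r-1}}{\to} \Hb^{r-b}(M_r,M_{r-1}) \to 0$$
is exact at every position except the leftmost, where $\ker\partial_b$ is one-dimensional. In all other total degrees the Atiyah--Bredon sequence is identically zero by Corollary \ref{cor:OFArelcohom}(1), hence trivially exact. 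Consequently the truncated sequence
$$\Hb^*(M_b) \to \Hb^*(M_{b+1},M_b) \to \cdots \to \Hb^*(M_r,M_{r-1}) \to 0$$
is exact at every interior and terminal position, and I would then invoke Proposition \ref{prop:vorneexakt} (stated for both equivariant and basic cohomology) to extend this to the full exact sequence, obtaining injectivity of $\Hb^*(M) \to \Hb^*(M_b)$ and exactness at $\Hb^*(M_b)$ for free.

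The main obstacle is the identification of the $d_1$-differentials of the spectral sequence with the Atiyah--Bredon boundary operators, but this is a standard unwinding of the definition of the connecting homomorphism in a triple; after that, the concentration on one row together with the simple abutment $\Hb^*(M) = \mathbb R$ leaves no room for any further failure of exactness.
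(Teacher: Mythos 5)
Your proposal is correct and is essentially the paper's own argument: the paper carries out the same reasoning as an explicit diagram chase (``the standard proof that cellular cohomology computes singular cohomology''), which your spectral sequence of the filtration $M_b\subset M_{b+1}\subset\cdots\subset M_r$ merely packages formally, using the identical inputs --- Corollary \ref{cor:OFArelcohom}(1) to concentrate everything in the single row $q=0$, Proposition \ref{prop:OFACFA} for the abutment $\Hb^*(M)=\RR$, and Proposition \ref{prop:vorneexakt} to handle the two leftmost positions. The paper itself notes (after Proposition \ref{prop:vorneexakt}) that the Atiyah--Bredon maps are the $d_1$-differentials of exactly this spectral sequence, so the identification you flag as the ``main obstacle'' is already on record.
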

\begin{proof} 
The proof is the same as the standard proof that the cellular cohomology of a CW complex computes the standard cohomology.  Let $\omega\in \Hb^{i-b}(M_i,M_{i-1})$ cause nonexactness of the sequence
\[
\Hb^*(M_b)\overset{\partial_b}{\to} \Hb^*(M_{b+1},M_b)\overset{\partial_{b+1}}{\to} \ldots \overset{\partial_{r-1}}{\to} \Hb^* (M_r,M_{r-1})\to 0
\]
at $\Hb^{i-b}(M_i,M_{i-1})$ for some $i>b$, i.e., $\partial_i \omega=0$ but $\omega\notin \im \partial_{i-1}$. Because $\partial_{i}\omega=0$, the exact sequence of the triple $(M_{i+1},M_i,M_{i-1})$ in basic cohomology implies that  there is an element $\eta\in \Hb^{i-b}(M_{i+1},M_{i-1})$ that is mapped to $\omega$ under the natural restriction map. We claim that $\eta$ defines a nontrivial element in $\Hb^{i-b}(M_{i+1})$. If that was not the case, $\eta$ would be in the image of the boundary map $\Hb^{i-b-1}(M_{i-1})\to \Hb^{i-b}(M_{i+1},M_{i-1})$. But since $\Hb^{i-b-1}(M_{i-2})=0$ by the third part of Corollary \ref{cor:OFArelcohom}, this would produce a contradiction to the assumption that $\omega\notin \im \partial_{i-1}$. Thus, $\Hb^{i-b}(M_{i+1})$ is nontrivial. By the second part of Corollary \ref{cor:OFArelcohom}, this implies that $\Hb^{i-b}(M)\neq 0$, which is in contradiction to Proposition \ref{prop:OFACFA}.  

To finish the proof, either use Proposition \ref{prop:vorneexakt} for basic cohomology, or redo the same argument as above to show that $\ker \partial_b$ is one-dimensional. 
\end{proof}

\begin{rem}\label{rem:basicsequencenotnecessary} This is a version of the Atiyah-Bredon sequence for basic cohomology. Unfortunately, we do not have a proof of the Cohen-Macaulayness of torus actions whose orbit space is open-face-acyclic that makes use of this sequence.

Note that for a general Cohen-Macaulay action, the basic version of the Atiyah-Bredon sequence is not necessarily exact. For example, consider the $S^1$-action on the $4$-sphere $M=S^4=\{(z,w,s)\mid |z|^2+|w|^2+s^2=1\}\subset \CC^2\times \RR$ given by
\[
t\cdot (z,w,s)=(tz,tw,s).
\]
It has exactly two fixed points and is thus equivariantly formal by criterion $(2)$ listed in Section \ref{sec:EquivariantFormality}. But the boundary operator $\partial_0:\Hb^*(M^{S^1})\to \Hb^*(M,M^{S^1})$  is not surjective, as $\Hb^3(M,M^{S^1})=H^3_c(\Mr/S^1)\cong H^0(\Mr/S^1)=\RR \neq 0$.
\end{rem}

\subsection{The $b+1$-skeleton}\label{sec:b+1}

By the results of Section \ref{sec:OFA} (in particular Propositions \ref{prop:OFA+}, \ref{prop:OFAd_p}, and Corollary \ref{cor:OFAweights}), the $b+1$-skeleton $M_{b+1}$ of an action on an orientable compact manifold $M$ with open-face-acyclic orbit space behaves similarly to the one-skeleton of an equivariantly formal action satisfying the so-called GKM conditions (see e.g.~\cite[Section 11.8]{GuilleminSternberg}). The only difference is that instead of being composed of two-spheres, $M_{b+1}$ is a union of submanifolds on which $T$ acts with cohomogeneity one. If two such cohomogeneity one submanifolds meet, their intersection consists of either one or two $b$-dimensional orbits. $M_{b+1}/T$ can be thought of as a graph, with the elements of $M_b/T=B/T$ as vertices. The vertices will be identified with the corresponding $b$-dimensional orbits, and also with points on that orbit. For $p\in M_b$ we write $[p]$ for $Tp$, when understood as a vertex. There is one edge for every submanifold $M^p$ with $p\in M_{b+1}\setminus M_{b}$ connecting its two $b$-dimensional orbits. Multiple edges between two vertices can occur. This graph is $d$-valent, with $d=\dim T-b$. Although the orbit space $M/T$ is not necessarily a convex polytope, we will refer to the $M^p/T$ as faces.
\begin{rem} 
In view of Remark \ref{rem:reductiontoequivformal}, if we choose a $b$-dimensional subtorus $K\subset T$ acting locally freely, then the $T/K$-action on $M/K$ satisfies the usual GKM conditions.
\end{rem}
\begin{ex} Consider the $T^3$-action on $M=S^5=\{(z_i)\in \CC^3\mid \sum |z_i|^2=1\}$ given by $(t_1,t_2,t_3)\cdot (z_1,z_2,z_3)=(t_1z_1,t_2z_2,t_3z_3)$. The bottom stratum $M_1$ consists of the three one-dimensional orbits, which are circles, and $M_2/T^3$ is a triangle. 
The diagonal $S^1\subset T^3$ acts freely on $S^5$, with $S^5/S^1=\CC P^2$. The induced $T^2$-action on $\CC P^2$ has the same orbit space as the $T^3$-action on $S^5$.
\end{ex}

For every oriented edge $e=M^p/T$, let $i(e)$ denote the initial vertex, $t(e)$ the terminal vertex, and we write $M_e=M^p$. There is a unique weight $\alpha(e)\in \mft_{i(e)}^*$ of the $T_{i(e)}$-representation on $\nu_{i(e)} T i(e)$ with kernel $\mft_p$.
\begin{lem} \label{lem:edgesschieben} For every two edges $e$ and $f$ with $i(e)=i(f)$, there is a unique edge $\tilde{e}$ with $i(\tilde{e})=t(f)$ such that $\left.\alpha(e)\right|_{\mft_f}=\left.\alpha(\tilde{e})\right|_{\mft_f}$.
\end{lem}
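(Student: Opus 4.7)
The plan is to exploit the $T$-equivariant structure of the face $M_f$ and its normal bundle in $M$. Write $v=i(f)$, $w=t(f)$, and $\mft_f:=\ker\alpha(f)\subset\mft_v$, an $(r-b-1)$-dimensional subalgebra. Since $\mft_f$ acts trivially on $M_f$, it is also contained in $\mft_w$; denote by $f_w$ the edge at $w$ corresponding to the same face $M_f$, so that $\alpha(f_w)\in\mft_w^*$ has kernel $\mft_f$ as well. By Corollary \ref{cor:OFAweights} the isotropy representation of $T_v$ on $\nu_v(T\cdot v)$ splits as a sum of $r-b$ complex one-dimensional weight spaces, one for each edge at $v$; moreover distinct edges at $v$ determine distinct codimension-one isotropy subalgebras of $\mft_v$, since any two faces through $T\cdot v$ with the same generic isotropy algebra are connected components of the same fixed-point set both meeting $T\cdot v$, hence coincide.

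Because $M_f$ is the connected component of $M^{\mft_f}$ through $v$, we have
\[
T_v M_f=(T_v M)^{\mft_f}=T_v(T\cdot v)\oplus(\nu_v(T\cdot v))^{\mft_f},
\]
and the $\mft_f$-fixed summand is precisely the weight space associated to the edge $f$ (using the injectivity of $e\mapsto\ker\alpha(e)$ just noted). Therefore
\[
\nu_v M_f\;\cong\;\bigoplus_{e'\neq f,\;i(e')=v}V_{\alpha(e')}
\]
as $T_v$-modules, and analogously $\nu_w M_f\cong\bigoplus_{\tilde e\neq f_w,\;i(\tilde e)=w}V_{\alpha(\tilde e)}$. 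I then invoke Lemma \ref{lem:normalbundlesplits} for the submanifold $M_f$: the normal bundle $\nu M_f$ splits equivariantly into $T_f$-weight bundles whose set of $T_f$-weights is constant along $M_f$. Restricting the $T_v$- and $T_w$-weights above to $\mft_f$ yields the equality
\[
\{\alpha(e)|_{\mft_f}:i(e)=v,\,e\neq f\}=\{\alpha(\tilde e)|_{\mft_f}:i(\tilde e)=w,\,\tilde e\neq f_w\}
\]
in $\mft_f^*$. A further application of Corollary \ref{cor:OFAweights} at a regular point of $M_f$ shows that these are $r-b-1$ pairwise distinct nonzero elements, so the matching defined by equal restriction to $\mft_f$ is a bijection between the edges $e\neq f$ at $v$ and the edges $\tilde e\neq f_w$ at $w$. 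This proves existence and uniqueness of $\tilde e$ when $e\neq f$. For $e=f$ the choice $\tilde e=f_w$ is forced, since $\alpha(f)|_{\mft_f}=0=\alpha(f_w)|_{\mft_f}$ while every other edge at $w$ has nonzero restricted weight.

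The main subtlety is bookkeeping rather than anything conceptually deep: one must track three tori ($T_v$, $T_w$, $T_f$) and the restriction maps $\mft_v^*\to\mft_f^*\leftarrow\mft_w^*$, and exploit the multiplicity-one conclusion of Corollary \ref{cor:OFAweights} both at $v,w$ (so that the edge-to-weight assignment is injective at each vertex) and along $M_f$ (so that restriction to $\mft_f$ preserves distinctness, upgrading a multiset equality of $T_f$-weights to a genuine bijection of edges). Beyond this accounting, Lemma \ref{lem:normalbundlesplits} carries the geometric content.
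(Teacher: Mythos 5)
Your proof is correct and follows the same route as the paper, which simply cites Lemma \ref{lem:normalbundlesplits} together with the discussion preceding it: you identify $\nu_v M_f$ and $\nu_w M_f$ with the sums of weight spaces of the edges other than $f$ at each vertex, use the constancy of the $\mft_f$-weights along $M_f$ from Lemma \ref{lem:normalbundlesplits}, and use the multiplicity-one statement of Corollary \ref{cor:OFAweights} to upgrade the weight equality to a bijection of edges. Your write-up merely makes explicit the bookkeeping (including the degenerate case $e=f$) that the paper leaves to the reader.
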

\begin{proof}
This is a consequence of Lemma \ref{lem:normalbundlesplits} and the discussion preceding it.
\end{proof}

\subsection{A Chang-Skjelbred Lemma}\label{sec:Chang}
Although the statements here are more general, most of the arguments in this section are taken from Sections 6 and 7 of \cite{MasudaPanov}. As before, we consider a $T$-action on an orientable compact manifold $M$ with open-face-acyclic orbit space, and the bottom stratum being the union of the $b$-dimensional orbits.

For a face $F=M^p/T$, let $\tau_F\in H^*_T(M)$ be the equivariant Thom class of $M^p$ in $M$ with respect to any orientation of the normal bundle, and $E_F\in H^*_T(M^p)$ the equivariant Euler class of the normal bundle of $M^p$. The restriction of $\tau_F$ to $M^p$ is $E_F$, see e.g.~\cite[p.~221]{GGK}.

\begin{lem} \label{lem:Eulerclassrestr}
Let $F=M^p/T$ be a face, and $2k$ the codimension of the closed $T$-invariant submanifold $M^p$ in $M$. Then for all $q\in M_b\cap M^p$,
$$
\left.E_F\right|_{T q}=\prod_{e\not\subset F,\ i(e)=[q]} \alpha(e)\in S^k(\mft_q^*)= H^{2k}_T(T q).
$$  
For $q\in M_b\setminus M^p$, we have $\left.E_F\right|_{T q}=0$.
\end{lem}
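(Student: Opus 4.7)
My plan is to interpret $E_F|_{Tq}$ via the equivariant Euler class of the normal bundle $\nu M^p$ restricted to the orbit $Tq$, and then identify this restricted bundle in terms of the edge weights $\alpha(e)$ coming from the graph $M_{b+1}/T$ introduced in Section~9.2.

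The case $q\in M_b\setminus M^p$ is the easier one and I would do it first. Here $Tq$ and $M^p$ are disjoint closed $T$-invariant submanifolds of $M$, so the equivariant Thom class $\tau_F\in H^*_T(M)$ can be represented by an equivariant form supported in a tubular neighborhood of $M^p$ chosen disjoint from $Tq$; thus $\tau_F|_{Tq}=0$. Since the statement $E_F|_{Tq}=0$ really concerns $\tau_F|_{Tq}$ (recall $\tau_F\in H^*_T(M)$ is characterized by $\tau_F|_{M^p}=E_F$), this settles the second claim.

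For the main case $q\in M_b\cap M^p$ the orbit $Tq$ sits inside $M^p$, so $E_F$ restricts directly along $H^*_T(M^p)\to H^*_T(Tq)$ to the equivariant Euler class of $\nu M^p|_{Tq}$. As an equivariant bundle over $Tq=T/T_q$ this is $T\times_{T_q}\nu_qM^p$, so with the identification $H^*_T(Tq)=S(\mft_q^*)$ one obtains
\[
E_F|_{Tq}=\prod_j \beta_j,
\]
where the $\beta_j\in\mft_q^*$ are the weights of the $T_q$-representation on $\nu_qM^p$, counted with multiplicity.

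It remains to identify the $\beta_j$. By Corollary~\ref{cor:OFAweights} the ambient $T_q$-representation $\nu_q(Tq)$ has exactly $\dim T_q=r-b$ pairwise distinct weights, one-dimensional each, and by the definitions of Section~9.2 these are precisely $\{\alpha(e):i(e)=[q]\}$. The $T$-invariance of $M^p$ yields the $T_q$-equivariant decomposition
\[
\nu_q(Tq)=(\nu_q(Tq)\cap T_qM^p)\oplus \nu_qM^p,
\]
and since the weights are distinct each $V_{\alpha(e)}$ lies entirely in one of the two summands. The key identification, which I expect to be the main obstacle, is the equivalence
\[
V_{\alpha(e)}\subset T_qM^p\ \Longleftrightarrow\ M_e\subset M^p\ \Longleftrightarrow\ e\subset F.
\]
For this I would appeal to the slice theorem: near $Tq$ the cohomogeneity-one submanifold $M_e$ is $T\times_{T_q}W_e$ for the unique $T_q$-invariant real $2$-plane $W_e\subset\nu_q(Tq)$ on which $T_q$ acts through $\alpha(e)$ (uniqueness uses that $\alpha(e)$ is characterized by having $\mft_{p_e}$ as its kernel in $\mft_q$), and similarly $M^p$ corresponds to the slice $\nu_q(Tq)\cap T_qM^p$. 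Hence $V_{\alpha(e)}\subset T_qM^p$ forces $M_e\subset M^p$ locally, and then globally by the connectedness of $M_e$. Granting this, $\nu_qM^p=\bigoplus_{e\not\subset F,\,i(e)=[q]} V_{\alpha(e)}$, and the Euler-class computation above yields the asserted product.
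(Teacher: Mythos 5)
Your proposal is correct and follows essentially the same route as the paper: the paper likewise reduces the computation to the splitting of $\nu M^p$ into two-plane weight subbundles indexed by the edges $e\not\subset F$ with $i(e)=[q]$ (its Lemma \ref{lem:normalbundlesplits} and the discussion preceding it play the role of your slice-theorem identification of the weights) and then takes the product of the corresponding equivariant Euler classes, while disposing of the case $q\in M_b\setminus M^p$ as obvious. The only notable difference is one of emphasis: where you cite as standard the fact that the equivariant Euler class of $T\times_{T_q}V\to Tq$ equals the product of the weights of the $T_q$-representation $V$ under $H^*_T(Tq)\cong S(\mft_q^*)$, the paper proves this explicitly in the Cartan model using a flat connection, taking care of the possible disconnectedness of $T_q$.
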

\begin{proof} For $q\notin M^p$, the statement is obvious, so let $q\in M_b\cap M^p$. By Lemma \ref{lem:normalbundlesplits}, the normal bundle $\nu M^p$ splits as the sum of $T$-equivariant two-plane bundles $\nu M^p=\bigoplus_{e\not\subset F,\ i(e)=[q]} V_{\alpha(e)}$. Thus, $E_F=\prod_{e\not\subset F,\ i(e)=[q]} E(V_{\alpha (e)})$. The bundle $V_{\alpha(e)}$, restricted to $Tq$, is
\[
\left.V_{\alpha(e)}\right|_{Tq}= T\times_{T_q} \CC = (T_q'\times \CC)/(T_q/T_q^0)
\]
where $T_q'$ is a complement of the identity component $T_q^0$ in $T$, i.e., $T=T_q\times T_q'$. 

We calculate the Euler class of the $T$-equivariant bundle $T_q'\times \CC\to T/T_q^0=T'$ in a way similar to \cite[Lemma I.3]{GGK}. See also \cite{BottTu} for the description of the equivariant Euler class in the Cartan model. Note that this bundle is trivial as a $T_q'$-equivariant bundle. For this, we choose a $T$-invariant connection form $\Theta\in \Omega^1(T_q'\times S^1)^T$ such that the $T_q'$-orbits are horizontal. Denoting the natural projection $T'\to Tq$ by $\rho$, one shows as in Lemma I.3 of the reference above that
\[
\rho^*(E_F)(\xi)=E(T_q'\times \CC)(\xi)=d_T\Theta(\xi)=\alpha(e)(\pi_{\mft_q}(\xi))
\]
for all $\xi\in \mft$, where $\pi_{\mft_q}:\mft\to \mft_q$ is the projection along the decomposition $\mft=\mft_q\oplus \mft_q'$. The form part vanishes because the curvature of $\Theta$ is zero by choice of $\Theta$. 
Using the isomorphism $\rho^*:S(\mft_q')\cong H^*_T(Tq) \to H^*_T(T/T_q^0)$, the claim follows.
\end{proof}
Note that because of the lemma, the restricted Euler class $\left.E_F\right|_{Tq}$ is independent of the chosen orientation of the normal bundle.

The next lemmas are analogous to Lemma 6.2 and 7.3 of \cite{MasudaPanov}; the only difference is that we are not allowed to subtract the restrictions of equivariant differential forms to different components of the bottom stratum.
\begin{lem}\label{lem:einschraenkungen} Let $N$ be a closed invariant subspace of $M$ containing $M_{b+1}$, and $\omega\in H^*_T(N)$. Choose an edge $e$ and write $[p]=i(e)$ and $[q]=t(e)$. Then the polynomials $\left.\omega\right|_{Tp}\in S(\mft_p^*)=H^*_T(Tp)$ and $\left.\omega \right|_{Tq}\in S(\mft_q^*)=H^*_T(Tq)$ coincide on the intersection $\mft_e=\mft_p\cap \mft_q$, where $\mft_e$ is the isotropy algebra of $M_e$.
\end{lem}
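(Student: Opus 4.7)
The key observation is that $\mft_e$, being the generic isotropy algebra of $T$ on $M_e$, acts trivially on all of $M_e$; in particular it fixes every point of $Tp$ and $Tq$. I will exploit this by restricting the group from $T$ to the connected Lie subgroup $K\subset T$ with Lie algebra $\mft_e$, and comparing equivariant restrictions to different points of the connected space $M_e$.

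First I would replace $\omega$ by $\omega|_{M_e}\in H^*_T(M_e)$ using $M_e\subset M_{b+1}\subset N$. Since $K$ acts trivially on $M_e$, the Borel construction splits and yields $H^*_K(M_e)=S(\mft_e^*)\otimes H^*(M_e)$, and likewise $H^*_K(y)=S(\mft_e^*)$ for any $y\in M_e$; the restriction $H^*_K(M_e)\to H^*_K(y)$ takes the form $\mathrm{id}\otimes \epsilon_y$ with $\epsilon_y:H^*(M_e)\to H^*(y)=\RR$. Because $M_e$ is connected, $\epsilon_y$ depends only on the degree-$0$ component of its argument and hence is the same for every $y\in M_e$. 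Consequently the image of $\omega|_{M_e}$ under $H^*_T(M_e)\to H^*_K(M_e)\to H^*_K(y)$ is an element $\eta\in S(\mft_e^*)$ independent of the point $y\in M_e$.

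It then suffices to identify $\eta$ with $\omega|_{Tp}|_{\mft_e}$ (and, by symmetry, with $\omega|_{Tq}|_{\mft_e}$). For $y\in Tp$, the diagram
\[
\xymatrix{
H^*_T(M_e) \ar[r] \ar[d] & H^*_T(Tp)=S(\mft_p^*) \ar[d] \\
H^*_K(M_e) \ar[r] & H^*_K(Tp)
}
\]
commutes by functoriality of the Borel construction in both group and space, and composing with the further restriction to $y\in Tp$ reduces everything to identifying the right-hand vertical map. I would claim that this map sends $g\in S(\mft_p^*)$ to $g|_{\mft_e}\otimes 1\in S(\mft_e^*)\otimes H^0(Tp)\subset H^*_K(Tp)$: factoring $Tp\to\mathrm{pt}$ yields a surjection $S(\mft^*)=H^*_T(\mathrm{pt})\to H^*_T(Tp)=S(\mft_p^*)$, $f\mapsto f|_{\mft_p}$, while the $K$-analogue is $f\mapsto f|_{\mft_e}$, so commutativity of the associated square pins down the claim. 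The further restriction to the point $y$ kills $H^{>0}(Tp)$, so the total map sends $\omega|_{Tp}$ to $\omega|_{Tp}|_{\mft_e}$, giving $\eta=\omega|_{Tp}|_{\mft_e}$, and the same argument with $q$ in place of $p$ completes the proof.

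The main technical point is the identification of the map $H^*_T(Tp)\to H^*_K(Tp)$ as polynomial restriction followed by the inclusion $S(\mft_e^*)=S(\mft_e^*)\otimes H^0(Tp)\hookrightarrow S(\mft_e^*)\otimes H^*(Tp)$; once this is in place, everything is a diagram chase combining the triviality of the $K$-action on $M_e$ with the connectedness of $M_e$.
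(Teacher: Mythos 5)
Your proposal is correct and follows essentially the same route as the paper: both arguments restrict the acting group to the subtorus with Lie algebra $\mft_e$, use the splitting $H^*_{T_e}(M_e)=H^*(M_e)\otimes S(\mft_e^*)$ coming from the triviality of that action (Lemma \ref{lem:trivialpartofaction}), and invoke the connectedness of $M_e$ to see that only the degree-zero components of $H^*(M_e)$ survive restriction to an orbit, so the two restrictions agree on $\mft_e$. Your extra care in identifying the map $H^*_T(Tp)\to H^*_K(Tp)$ as polynomial restriction is a correct elaboration of a step the paper leaves implicit.
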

\begin{proof} Using Lemma \ref{lem:trivialpartofaction}, we obtain the following diagram, in which the upper right space is nothing but $S(\mft_p^*)\oplus S(\mft_q^*)$.
\[
\xymatrix@R=20pt@C=10pt{ H^*_T(M) \ar[r] \ar[d] & H^*_T(Tp)\oplus H^*_T(Tq) \ar[d] \ar[r]^<<<{\cong} & (H^*_{T/T_e}(Tp)\otimes S(\mft_e^*))\oplus (H^*_{T/T_e}(Tq)\otimes S(\mft_e^*)) \ar[d] \\
H^*_{T_e}(M_e) \ar[r] & H^*_{T_e}(Tp)\oplus H^*_{T_e}(Tq) \ar[r]^<<<<<{\cong} & (H^*(Tp)\otimes S(\mft_e^*))\oplus (H^*(Tq)\otimes S(\mft_e^*))
}
\]
Since the diagram commutes and $H^*_{T_e}(M_e)=H^*(M_e)\otimes S(\mft_e^*)$, we see that the image of $\omega$ in the bottom right space is of the form $(\sum_i \left.\omega_i\right|_{Tp}\otimes f_i, \sum_i \left.\omega_i\right|_{Tq} \otimes f_i)$ for some $f_i\in S(\mft_e^*)$ and $\omega_i\in H^*(M_e)$. As the restrictions of $\left.\omega\right|_{Tp}$ and $\left.\omega \right|_{Tq}$ to $S(\mft_e^*)$ are given by those summands for which $\omega_i$ is a $0$-form, the lemma follows.
\end{proof}

Let $F$ be a face of $M/T$, and $[q]\in F$ a vertex of $F$. We denote by $I(F)_{[q]}\subset S(\mft_q^*)$ the ideal generated by all $\alpha(e)$ with $e\subset F$. 

\begin{lem} \label{lem:CShelp2} Let $N$ be a closed invariant subspace of $M$ containing $M_{b+1}$, and let $F$ be a face of $M/T$. For every $\omega\in H^*_T(N)$, if $\left.\omega\right|_{Tp}\notin I(F)_{[p]}$ for some vertex $[p]\in F$, then $\left.\omega \right|_{Tq}\notin I(F)_{[q]}$ for every vertex $[q]\in F$.
\end{lem}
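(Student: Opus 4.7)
The plan is to prove the contrapositive: if $[p],[q]\in F$ are vertices and $\omega|_{Tp}\in I(F)_{[p]}$, then $\omega|_{Tq}\in I(F)_{[q]}$. By an easy induction on the length of a connecting path, it is enough to treat the case where $[p],[q]$ are adjacent in the one-skeleton of $F$, joined by an edge $e\subset F$. The one-skeleton of $F$ is indeed connected: setting $F=M^{p_0}/T$, the $T/T_{p_0}$-action on $M^{p_0}$ has open-face-acyclic orbit space (since open faces of $F$ are open faces of $M/T$), so Proposition \ref{prop:OFA+} applies and either yields a single vertex, making the claim vacuous, or yields a connected $b+1$-skeleton in $M^{p_0}$, which is exactly the one-skeleton of $F$.

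Let $r_p:S(\mft_p^*)\to S(\mft_e^*)$ be the restriction, dual to $\mft_e=\mft_p\cap\mft_q\hookrightarrow\mft_p$. Its kernel is the principal ideal $(\alpha(e))$, since $\alpha(e)\in\mft_p^*$ has kernel $\mft_e$. Writing $\tilde e_1,\ldots,\tilde e_k$ for the edges of $F$ at $[p]$ other than $e$, we have
\[
I(F)_{[p]}=\bigl(\alpha(e),\alpha(\tilde e_1),\ldots,\alpha(\tilde e_k)\bigr),
\]
so $\omega|_{Tp}\in I(F)_{[p]}$ iff $r_p(\omega|_{Tp})$ lies in the ideal $J_p\subset S(\mft_e^*)$ generated by $r_p(\alpha(\tilde e_1)),\ldots,r_p(\alpha(\tilde e_k))$; analogously one defines $J_q$ using the edges $\tilde f_1,\ldots,\tilde f_l$ of $F$ at $[q]$ other than $e$. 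Lemma \ref{lem:einschraenkungen} gives $r_p(\omega|_{Tp})=r_q(\omega|_{Tq})$, so the proof reduces to showing $J_p=J_q$.

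The remaining task is to match the generators of $J_p$ with those of $J_q$. Lemma \ref{lem:edgesschieben}, applied inside $M$, produces a bijection between edges of the full $b+1$-skeleton at $[p]$ (other than $e$) and at $[q]$ (other than $e$), such that paired weights agree after restriction to $\mft_e$. I claim this bijection matches $F$-edges with $F$-edges; granting this, the generating sets of $J_p$ and $J_q$ agree termwise, so $J_p=J_q$ and we are done. Justifying the claim is the main obstacle of the argument. For it, I apply Lemma \ref{lem:normalbundlesplits} in the ambient $T$-manifold $M^{p_0}$ (rather than in $M$) to the cohomogeneity-one submanifold $M_e\subset M^{p_0}$: the normal bundle $\nu_{M^{p_0}}M_e$ splits $T$-equivariantly as a sum of weight plane subbundles, whose weights at $[p]$ are precisely the $\alpha(\tilde e_i)$ and at $[q]$ precisely the $\alpha(\tilde f_j)$, while the identification between them is by restriction to $\mft_e$. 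Hence the bijection of Lemma \ref{lem:edgesschieben} indeed restricts to one between $F$-edges at $[p]$ and $F$-edges at $[q]$, completing the reduction.
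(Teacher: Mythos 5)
Your proof is correct and follows essentially the same route as the paper's: both reduce to adjacent vertices via the connectivity of the one-skeleton of $F$, compare the two restrictions on $\mft_e$ via Lemma \ref{lem:einschraenkungen}, and rely on the weight correspondence of Lemmas \ref{lem:edgesschieben} and \ref{lem:normalbundlesplits} carrying $F$-edges to $F$-edges. Your reformulation of the final step as the ideal identity $J_p=J_q$ in $S(\mft_e^*)$ is a clean equivalent of the paper's explicit construction of a comparison element and divisibility argument, and you make explicit (by applying Lemma \ref{lem:normalbundlesplits} inside $M^{p_0}$) the $F$-edge-preservation that the paper only asserts.
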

\begin{proof} Suppose $\left.\omega \right|_{Tq}\in I(F)_{[q]}$ for some vertex $[q]\in F$, i.e.
\[
\left.\omega \right|_{Tq}=\sum_{e\subset F:\, i(e)=[q]} \alpha(e) g_e 
\]
for some $g_e\in S(\mft_q^*)$. Now choose another vertex $[\tilde{q}]$ that is joined to $[q]$ by an edge $f$. By Lemma \ref{lem:edgesschieben}, for every $e$ in the sum above there is a unique edge $\tilde{e}\subset F$ with $i(\tilde{e})=[\tilde{q}]$ such that $\left.\alpha(\tilde{e})\right|_{\mft_f}=\left.\alpha(\tilde{e})\right|_{\mft_f}$.
If we define
\[
\eta:=\sum_e \alpha(\tilde{e})g_{\tilde{e}}\in I(F)_{[\tilde{q}]}
\]
where $g_{\tilde{e}}\in S(\mft_{\tilde{q}})$ is any polynomial with $\left.g_{\tilde{e}}\right|_{\mft_f}=\left.g_e\right|_{\mft_f}$, then Lemma \ref{lem:einschraenkungen} implies that $\left.\omega\right|_{T\tilde{q}}-\eta\in S(\mft_{\tilde{q}}^*)$ vanishes on $\mft_f$. In particular it is divisible by the weight that vanishes on $\mft_f$, and hence $\left.\omega\right|_{T\tilde{q}}\in I(F)_{[\tilde{q}]}$.

This completes the proof because the $b+1$-skeleton of $F$ is connected by Proposition \ref{prop:OFA+}.
\end{proof}

For every closed invariant subspace $N\subset M$ containing $M_{b+1}$, define 
\[
K_N=\ker(H^*_T(N)\to H^*_T(M_b)).
\]
 For $b=0$, i.e., the bottom stratum $M_b=M_0$ being the fixed point set, $K_N$ is the torsion submodule of $H^*_T(N)$.

\begin{prop} \label{prop:genbythomclasses} For every closed invariant subspace $N\subset M$ containing $M_{b+1}$, the quotient $H^*_T(N)/K_N$ is generated by the restrictions of the elements $\tau_F$ to $N$.
\end{prop}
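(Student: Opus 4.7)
My plan is to reduce the proposition to a combinatorial question on the bottom stratum and solve it by a double induction. Since $K_N$ is by definition the kernel of $H^*_T(N)\to H^*_T(M_b)$, the quotient $H^*_T(N)/K_N$ injects into $H^*_T(M_b)=\bigoplus_{[p]\in B/T}S(\mft_p^*)$. It therefore suffices to show that for every $\omega\in H^*_T(N)$, the tuple $\omega|_{M_b}=(\omega|_{Tp})_{[p]\in B/T}$ is an $S(\mft^*)$-linear combination of the tuples $\tau_F|_{M_b}$, as $F$ ranges over the faces of $M/T$. By Lemma~\ref{lem:einschraenkungen} this tuple satisfies the edge-compatibility conditions, and by Lemma~\ref{lem:Eulerclassrestr} the comparison tuples have the explicit shape $\tau_F|_{Tp}=\prod_{e\not\subset F,\,i(e)=[p]}\alpha(e)$ for $[p]\in F$ and $0$ otherwise.

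I propose to prove the combinatorial claim by an outer induction on the cohomogeneity $d=\dim T-b$ of the $T$-action on $M$, combined with an inner induction on the dimension of the minimal face $F_0\subseteq M/T$ whose vertex set contains the support $\{[p]:\omega|_{Tp}\neq 0\}$ of $\omega|_{M_b}$. The outer base $d=0$ is immediate since then $M=Tp$ and $\tau_{M/T}=1$ generates $S(\mft_p^*)$ over $S(\mft^*)$. For the inner base $\dim F_0=0$, the support is a single vertex $[p]$ and edge compatibility forces $\omega|_{Tp}$ to vanish on every $\mft_e$ for edges $e$ at $[p]$; since by Corollary~\ref{cor:OFAweights} the $d$ weights $\alpha(e)$ form a basis of $\mft_p^*$ and are therefore pairwise coprime in the UFD $S(\mft_p^*)$, their product $\tau_{\{[p]\}}|_{Tp}=\prod_{e}\alpha(e)$ divides $\omega|_{Tp}$, and lifting the quotient to $S(\mft^*)$ finishes the case. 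In the inner step $\dim F_0=k>0$, edge compatibility with the external edges at each vertex $[p]\in F_0$ (whose other endpoints lie outside $F_0$, hence outside the support) gives $\tau_{F_0}|_{Tp}\mid\omega|_{Tp}$; write $\omega|_{Tp}=c_p\cdot\tau_{F_0}|_{Tp}$. Lemma~\ref{lem:edgesschieben} implies that for an edge $e\subset F_0$ joining $[p]$ and $[q]$ the factors of $\tau_{F_0}|_{Tp}$ and $\tau_{F_0}|_{Tq}$ match up to restrict to the same nonzero element of $S(\mft_e^*)$, so the edge compatibility of $\omega$ descends to $(c_p)_{[p]\in F_0}$, an edge-compatible tuple for the cohomogeneity-$k$ action of $T$ on $M^p$ (with $F_0=M^p/T$). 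The outer inductive hypothesis applied to $M^p$ expresses $(c_p)$ as $\sum_{F'\subseteq F_0}c_{F'}\tau_{F'}^{M^p}|_{(M^p)_b}$; multiplying through by $\tau_{F_0}$ and using the factorization $\tau_{F'}|_{Tp}=\tau_{F_0}|_{Tp}\cdot\tau_{F'}^{M^p}|_{Tp}$ (immediate from Lemma~\ref{lem:Eulerclassrestr}, since the edges at $[p]$ external to $F'$ partition into those external to $F_0$ and those internal to $F_0$ but external to $F'$) recovers $\omega|_{M_b}$ as the desired combination.

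The principal obstacle is the case $F_0=M/T$, when the support of $\omega|_{M_b}$ is not contained in any proper face. Here a preliminary reduction is needed: one subtracts suitable $S(\mft^*)$-multiples of $\tau_{M/T}=1$ (and possibly of facet Thom classes) designed to kill $\omega$ at chosen vertices, aiming to push the support into a proper face so the inner induction applies. Ensuring termination of this reduction and, more delicately, justifying that the intermediate tuples $(c_p)$ genuinely arise from cohomology classes on the corresponding face submanifolds (rather than being only combinatorially compatible), constitutes the hard part. I expect to handle it by adapting the arguments of Masuda and Panov~\cite{MasudaPanov} to our Cohen-Macaulay setting, using the injectivity $H^*_T(M)\to H^*_T(B)$ established in Section~\ref{sec:bottomstratum} and the face-structural information from Section~\ref{sec:OFA} as substitutes for the integer-coefficient equivariant formality and canonical models exploited there.
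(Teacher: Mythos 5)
There is a genuine gap, and it sits exactly where the real work of this proposition lies. The pivotal step of your inner induction is the claim that for a vertex $[p]\in F_0$ every edge $e$ at $[p]$ with $e\not\subset F_0$ has its terminal vertex outside $F_0$, hence outside the support, so that $\alpha(e)\mid \left.\omega\right|_{Tp}$ and therefore $\left.\tau_{F_0}\right|_{Tp}\mid\left.\omega\right|_{Tp}$. This is false: an edge not contained in a face can perfectly well join two vertices of that face (the paper explicitly allows multiple edges between two vertices). Concretely, for the $T^2$-action on $S^4$ appearing in the paper, take $\omega=1$: its support is both vertices, a minimal face containing the support is one of the two edges $F_0=e_1$, the external edge $e_2$ at $N$ terminates at $S$, which lies in the support, and indeed $\left.\tau_{F_0}\right|_{TN}=\alpha(e_2)$ does not divide $\left.\omega\right|_{TN}=1$. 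So the failure is not confined to the case $F_0=M/T$ that you flag; it already breaks the inner step for proper faces. Moreover the two issues you defer as ``the hard part'' --- the case $F_0=M/T$ (which contains $\omega=1$ and any class with full support, i.e.\ the generic case) and the fact that the quotient tuples $(c_p)$ must be realized as restrictions of genuine classes on $M^{p_0}$ before the outer inductive hypothesis can be applied (or else the induction must be restated for abstract edge-compatible tuples, which is a strictly stronger, GKM-type assertion not established anywhere at this point) --- are precisely the content of the proposition, and no argument is offered for either.

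A telling symptom is that your argument never invokes Lemma \ref{lem:CShelp2}, which is one of the two lemmas the paper transfers expressly for this proof. The paper's own proof is a direct reference to Proposition 7.4 of \cite{MasudaPanov}: there the induction is organized not around the support of $\left.\omega\right|_{M_b}$ and a minimal containing face, but around the ideals $I(F)_{[p]}$; one locates a face $F$ adapted to a chosen vertex $[p]$ via non-membership of $\left.\omega\right|_{Tp}$ in $I(F)_{[p]}$, subtracts a suitable multiple of $\tau_F$, and uses Lemma \ref{lem:CShelp2} to control what this subtraction does at the \emph{other} vertices of $F$ --- exactly the control your support-based bookkeeping lacks. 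To repair your write-up you would essentially have to reproduce that mechanism. A secondary point: you prove generation of $H^*_T(N)/K_N$ as an $S(\mft^*)$-module by the $\left.\tau_F\right|_N$, whereas the application to the surjectivity of $\RR[M/T]\to H^*_T(M)$ in Section \ref{sec:ActionsWithAcyclicOrbitSpace} uses generation as a ring; this discrepancy is bridgeable but should be addressed.
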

\begin{proof} Now that we have transferred the necessary Lemmata \ref{lem:Eulerclassrestr} and \ref{lem:CShelp2} to our situation, the proof is exactly the same as the proof of Proposition 7.4 in \cite{MasudaPanov}, and we therefore omit it. It does not complicate things to consider $N$ instead of the whole manifold $M$, but note that it is important that $N$ contains the whole $(b+1)$-skeleton.
\end{proof}
As a corollary we obtain a version of the Chang-Skjelbred Lemma for actions with open-face-acyclic orbit space, compare \cite[Lemma 2.3]{ChangSkjelbred}. 
\begin{cor} \label{cor:torsionspaltet}
\begin{enumerate}
\item For every closed invariant subspace $N\subset M$ containing $M_{b+1}$, we have $H^*_T(N)=(\im H^*_T(M)\to H^*_T(N))\oplus K_N=H^*_T(M)\oplus K_N$ as $S(\mft^*)$-modules.
\item The sequence
\[
0\to H^*_T(M)\to H^*_T(M_b) \to H^*_T(M_{b+1},M_b)
\]
is exact. 
\end{enumerate}
\end{cor}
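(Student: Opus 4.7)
The plan is to deduce both parts from Proposition \ref{prop:genbythomclasses} combined with the injectivity provided by Theorem \ref{thm:OFA+injective}. First I note that, because the orbit space is open-face-acyclic, we are in case (2) of Proposition \ref{prop:OFA+}, so $M_b=B$ and Theorem \ref{thm:OFA+injective} applies to $M_b$ directly.

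For (1), the restriction $H^*_T(M)\to H^*_T(N)$ composed with $H^*_T(N)\to H^*_T(M_b)$ equals the injection $H^*_T(M)\to H^*_T(B)$, so I obtain simultaneously that $H^*_T(M)\to H^*_T(N)$ is injective and that its image meets $K_N$ only in zero: any $\omega\in H^*_T(M)$ whose image in $H^*_T(N)$ lies in $K_N$ restricts trivially to $M_b$ and hence vanishes. To finish I use Proposition \ref{prop:genbythomclasses} to see that $H^*_T(N)/K_N$ is generated as an $S(\mft^*)$-module by the classes $\tau_F|_N$; since each $\tau_F$ is defined on all of $M$, its restriction to $N$ lies in the image of $H^*_T(M)\to H^*_T(N)$, so this image surjects onto $H^*_T(N)/K_N$. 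Combined with the trivial intersection, this yields the direct sum decomposition; injectivity lets me rewrite the first summand as $H^*_T(M)$.

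For (2), I specialize (1) to $N=M_{b+1}$. By definition, every element of $K_{M_{b+1}}$ restricts to zero in $H^*_T(M_b)$, so the decomposition forces
\[
\im\bigl(H^*_T(M_{b+1})\to H^*_T(M_b)\bigr)=\im\bigl(H^*_T(M)\to H^*_T(M_b)\bigr).
\]
On the other hand, the long exact sequence of the pair $(M_{b+1},M_b)$ identifies the kernel of $H^*_T(M_b)\to H^*_T(M_{b+1},M_b)$ with the image of $H^*_T(M_{b+1})\to H^*_T(M_b)$; substituting the previous equality gives exactness at $H^*_T(M_b)$. Injectivity of $H^*_T(M)\to H^*_T(M_b)$ is exactly Theorem \ref{thm:OFA+injective} together with $M_b=B$.

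All the substantive work has already been carried out: the structural result from Proposition \ref{prop:genbythomclasses} (itself built on Lemmas \ref{lem:Eulerclassrestr} and \ref{lem:CShelp2}) and the injectivity of Theorem \ref{thm:OFA+injective}. What remains is routine assembly, and I do not anticipate any real obstacle. The only point that requires explicit care is recording that open-face-acyclicity upgrades almost open-face-acyclicity to the identification $M_b=B$ via Proposition \ref{prop:OFA+}, so that the injectivity theorem can be applied to $M_b$ rather than to an a priori larger bottom stratum.
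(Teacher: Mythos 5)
Your argument is correct and is essentially the paper's own proof: injectivity from Theorem \ref{thm:OFA+injective} shows the image of $H^*_T(M)$ meets $K_N$ trivially, Proposition \ref{prop:genbythomclasses} (via the globally defined Thom classes $\tau_F$) gives the spanning, and taking $N=M_{b+1}$ yields exactness at $H^*_T(M_b)$. The only quibble is your appeal to ``case (2) of Proposition \ref{prop:OFA+}'': open-face-acyclicity does not by itself place you in that case as stated, but the identification $B=M_b$ is a standing assumption of this section (and does follow from open-face-acyclicity with a short extra argument), so nothing is lost.
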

\begin{proof}
Because $H^*_T(M)\to H^*_T(M_b)$ is injective by Theorem \ref{thm:OFA+injective}, the image of $H^*_T(M)\to H^*_T(N)$ does not intersect $K_N$. That the two submodules span $H^*_T(N)$ follows directly from Proposition \ref{prop:genbythomclasses}.

To prove $(2)$, note that exactness at $H^*_T(M)$ was proven in Theorem \ref{thm:OFA+injective}. Taking $N=M_{b+1}$ in $(1)$, we see that $\im H^*_T(M)\to H^*_T(M_b)=\im H^*_T(M_{b+1})\to H^*_T(M_b)$, which is exactness at $H^*_T(M_b)$.
\end{proof}

\subsection{Actions with open-face-acyclic orbit space are Cohen-Macaulay}\label{sec:OFAisCM}

Masuda and Panov \cite[Definition 5.3]{MasudaPanov} define the {\it face ring} of $M/T$ as the graded ring
\[
\RR[M/T]=\RR [\tau_F\mid F \text{ a face of }M/T]/ I,
\]
where $I$ is the ideal generated by elements of the form
\[
\tau_F\tau_G-\tau_{F\vee G}\cdot \sum_{E\in F \cap G} \tau_E.
\]
Here, in case $F$ and $G$ have nonempty intersection, $F\vee G$ is the unique smallest face containing $F$ and $G$, and zero otherwise. The notation $E\in F\cap G$ means that $E$ is a connected component of $F\cap G$.  One proves just as in \cite[Section 6]{MasudaPanov} that the canonical (after fixing compatible orientations on all normal bundles) homomorphism $\RR[M/T]\to H^*_T(M)$ is well-defined and injective, and by Proposition \ref{prop:genbythomclasses} and Theorem \ref{thm:OFA+injective} it is surjective as well. Note that because we have already proven injectivity of $H^*_T(M)\to H^*_T(M_b)$, we can work with $H^*_T(M)$ itself instead of $H^*_T(M)/(\ker H^*_T(M)\to H^*_T(M_b))$.

\begin{ex} Consider the $T^3$-action on $S^5=\{(z_i)\mid \sum |z_i|^2=1\}\subset \CC^3$ defined by $(t_1,t_2,t_3)\cdot (z_1,z_2,z_3)=(t_1z_1,t_2z_2,t_3z_3)$ and the $T^2$-action on $\CC P^2$ defined by $(t_1,t_2)\cdot [z_0:z_1:z_2]=[z_0:t_1z_1:t_2z_2]$. These actions are Cohen-Macaulay (the latter even equivariantly formal) and their orbit spaces coincide and are open-face-acyclic. Hence the equivariant cohomologies $H^*_{T^3}(S^5)$ and $H^*_{T^2}(\CC P^2)$ are isomorphic as rings. Alternatively, this follows from Remark \ref{rem:reductiontoequivformal} as the diagonal circle $S^1$ in $T^3$ acts freely  on $S^5$ such that the induced $T^2$-action on $S^5/S^1=\CC P^2$ coincides with the action described above.
\end{ex}

\begin{lem}\label{lem:surjectiveontoface}
If $F=M^p/T$ is a face such that for every subface $G\subset F$, any intersection of a face $H$ with $G$ is connected, then $H^*_T(M)\to H^*_T(M^p)$ is surjective.
\end{lem}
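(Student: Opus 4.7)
The plan is to apply Proposition~\ref{prop:genbythomclasses} to $M^p$ viewed as its own orientable compact $T$-manifold. Its orbit space $F$ is open-face-acyclic (every subface of $F$ coincides with a face of $M/T$ contained in $F$, whose regular stratum has acyclic quotient), and Theorem~\ref{thm:OFA+injective} applied to the $T$-action on $M^p$ shows that $H^*_T(M^p)\to H^*_T(M_b\cap M^p)$ is injective. Hence the kernel appearing in Proposition~\ref{prop:genbythomclasses} vanishes, and $H^*_T(M^p)$ is generated as an $S(\mft^*)$-module by the Thom classes $\{\tau_G^{M^p} : G\text{ a subface of }F\}$. It therefore suffices to show each such $\tau_G^{M^p}$ lies in the image of $r\colon H^*_T(M)\to H^*_T(M^p)$.

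I would proceed by induction on the codimension $d_F-d_G$, the base case $G=F$ being immediate since $\tau_F^{M^p}=r(1)=1$. For the inductive step on a proper subface $G\subsetneq F$, I would exploit the face-ring identity
$$
\tau_H\cdot\tau_F=\tau_{H\vee F}\cdot\tau_{H\cap F}
$$
in $H^*_T(M)\cong\RR[M/T]$, available as a single-term relation precisely because the hypothesis makes $H\cap F$ a single connected subface of $F$. Restricting to $M^p$ and using the projection formula — $\tau_{G'}|_{M^p}=E_F\cdot\tau_{G'}^{M^p}$ for any subface $G'\subseteq F$, where $E_F$ is the equivariant Euler class of $\nu(M^p\subset M)$, and $\tau_{F'}|_{M^p}=E_{F'}|_{M^p}$ for any face $F'\supseteq F$ — yields
$$
E_F\cdot r(\tau_H)=E_F\cdot E_{H\vee F}|_{M^p}\cdot\tau_{H\cap F}^{M^p}
$$
in $H^*_T(M^p)$. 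Since the $T$-representation on $\nu(M^p\subset M)$ has no trivial summand, multiplication by $E_F$ is injective in $H^*_T(M^p)$ by \cite[Proposition~4]{Duflot}, so one may cancel $E_F$ to get $r(\tau_H)=E_{H\vee F}|_{M^p}\cdot\tau_{H\cap F}^{M^p}$. In particular, choosing a face $H$ of $M/T$ with $H\cap F=G$ and $\mft_H\cap\mft_p=0$ — so that $H\vee F$ is the top face of $M/T$ and $E_{H\vee F}|_{M^p}=1$ — produces directly $r(\tau_H)=\tau_G^{M^p}$.

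The main obstacle is the existence of such a transverse $H$ for every subface $G$ of $F$: one needs a face of $M/T$ meeting $F$ in exactly $G$ whose isotropy algebra is a direct complement of $\mft_p$ inside $\mft_G$. The GKM-like structure of $M_{b+1}$ from Subsection~\ref{sec:b+1}, together with the weight decomposition of the normal bundle (Lemma~\ref{lem:normalbundlesplits} and Corollary~\ref{cor:OFAweights}), should furnish such faces by reading off the weights of $\nu M^G$ that are nontrivial on $\mft_p$. In configurations where no direct transverse $H$ is available, the identity above can still be solved by iterating with $H$ of weaker transversality and absorbing the extra factor $E_{H\vee F}|_{M^p}$ via the inductive hypothesis that $\tau_{G'}^{M^p}\in\im r$ for $G'\supsetneq G$.
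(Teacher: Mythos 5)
Your proposal is correct in substance and shares the paper's skeleton: both arguments first invoke Proposition \ref{prop:genbythomclasses} together with Theorem \ref{thm:OFA+injective}, applied to $M^p$ as a $T$-manifold with open-face-acyclic orbit space, to see that $H^*_T(M^p)$ is generated by the Thom classes of its subfaces, and both then lift each $\tau_G^{M^p}$ to the Thom class $\tau_H$ of a face $H$ of $M/T$ meeting $F$ exactly in $G$ and transversally along it. Where you differ is in how $r(\tau_H)=\tau_G^{M^p}$ is verified: the paper does this in one geometric step, observing that for the unique maximal face $H=M^{\tilde q}/T$ with $H\cap F=G$ the normal bundle of $M^{\tilde q}$ in $M$ restricted to $M^q$ coincides with the normal bundle of $M^q$ in $M^p$ (a clean intersection with vanishing excess bundle), so $\tau_H$ restricts to the Thom class of $M^q$ in $M^p$ on the nose; you instead route through the face-ring relation $\tau_H\tau_F=\tau_{H\vee F}\tau_{H\cap F}$, the projection formula, and cancellation of $E_F$, which is legitimate by \cite{Duflot} since $\nu(M^p\subset M)$ has no fixed vectors, but is longer and leans on the face-ring relations, which the paper only asserts via Masuda--Panov. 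The one point you leave open, the existence of the transverse $H$, is not actually in doubt: by Corollary \ref{cor:OFAweights} and Lemma \ref{lem:normalbundlesplits}, at a vertex $q$ of $G$ the weights of $\nu_qM^q$ split into those tangent to $M^p$ and those normal to it, and the face through $q$ spanned by the directions of $G$ together with all directions normal to $M^p$ is exactly the paper's maximal $H$ with $H\cap F=G$ (the connectedness hypothesis of the lemma is what forces the global intersection to equal $G$) and satisfies $H\vee F=M/T$. Consequently your fallback for configurations lacking a transverse $H$ is never needed --- which is fortunate, because as stated it would not close the induction: a non-transverse $H$ gives $r(\tau_H)=E_{H\vee F}|_{M^p}\cdot\tau_G^{M^p}$, i.e.\ the desired class \emph{multiplied} by a positive-degree factor, and knowing that $\tau_{G'}^{M^p}\in\im r$ for larger faces $G'$ provides no way to divide that factor out.
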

\begin{proof} By Proposition \ref{prop:genbythomclasses} and Theorem \ref{thm:OFA+injective}, $H^*_T(M^p)$ is generated by the Thom classes of faces in $M^p$. Let $\tau_G$ be such a Thom class, with $G=M^q/T\subset F$. There is a unique maximal face $H=M^{\tilde{q}}/T$ in $M/T$ whose intersection with $F$ is $G$. For a $T$-invariant metric on $M$, we have that the normal bundle of $M^{\tilde{q}}$ in $M$, restricted to $M^q$, coincides with the normal bundle of $M^q$ in $M^p$. Thus, $\tau_H\in H^*_T(M)$ is mapped onto $\tau_G\in H^*_T(M^p)$. 
\end{proof}

\begin{rem} The condition on $F$ is necessary. For example, consider the action of a two-dimensional maximal torus $T\subset \mathrm{SO}(5)$ on $M=S^4$. If $K\subset T$ is a one-dimensional stabilizer, then $H^*_{T}(M)\to H^*_{T}(M^\mfk)$ is not surjective. In fact, $\dim H^2_{T}(M)=2$, but $\dim H^2_{T}(M^\mfk)=3$.
\end{rem}

\begin{thm}\label{thm:OFAisCM}
Every torus action on a compact orientable manifold with open-face-acyclic orbit space is Cohen-Macaulay.
\end{thm}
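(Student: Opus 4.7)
The plan is to verify the equivalent Cohen-Macaulay criterion given by Theorem~\ref{thm:genABsequence}: showing that the Atiyah-Bredon sequence
\[
0 \to H^*_T(M) \to H^*_T(M_b) \to H^*_T(M_{b+1}, M_b) \to \cdots \to H^*_T(M, M_{r-1}) \to 0
\]
is exact. Corollary~\ref{cor:torsionspaltet}(2) (resting on Proposition~\ref{prop:genbythomclasses} and the equivariant injectivity Theorem~\ref{thm:OFA+injective}) already delivers exactness at the first two positions, so the task reduces to exactness further along. By \cite[Lemma 4.1]{FranzPuppe} this is equivalent to showing, for every $b\leq j\leq r-1$, that the short sequence
\[
0 \to H^*_T(M, M_{j-1}) \to H^*_T(M_j, M_{j-1}) \to H^*_T(M, M_j) \to 0
\]
is exact.

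I will establish these short sequences together with the Cohen-Macaulay property of each $H^*_T(M, M_j)$ in Krull dimension $r-j-1$ by downward induction on $j$, starting at $j=r-1$. The base case is a direct computation: assuming effectiveness (harmless by Example~\ref{ex:CM}), $T$ acts locally freely on $\Mr = M_{(r)}$ and $H^*_T(M, M_{r-1}) = H^*_c(\Mr/T)$; Corollary~\ref{cor:dimMdimTb} gives $\dim \Mr/T = r-b$, Proposition~\ref{prop:OFACFA} gives acyclicity, and orbifold Poincar\'e duality identifies this module with $\RR$ concentrated in degree $r-b$, which is Cohen-Macaulay of Krull dimension $0$. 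For the inductive step, the long exact sequence of the triple $(M, M_j, M_{j-1})$ contains a natural map $\alpha\colon H^*_T(M, M_j) \to H^*_T(M, M_{j-1})$; once $\alpha$ is shown to vanish, the required short exact sequence arises. Combining it with Lemma~\ref{lem:depthses}, the localization-theoretic bound $\dim H^*_T(M, M_{j-1}) \leq r-j$ from \cite[Lemma 4.4]{FranzPuppe2003}, and the Cohen-Macaulayness of $H^*_T(M_j, M_{j-1})$ in dimension $r-j$ from Example~\ref{ex:MiMi-1CM} then yields the Cohen-Macaulayness of $H^*_T(M, M_{j-1})$ in Krull dimension $r-j$, closing the induction.

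The vanishing of $\alpha$ is the essential point. At the terminal step $j = b$, it is precisely the equivariant injectivity Theorem~\ref{thm:OFA+injective}. At $j = b+1$, it follows automatically: the splitting $H^*_T(M_{b+1}) = H^*_T(M) \oplus K_{M_{b+1}}$ of Corollary~\ref{cor:torsionspaltet}(1), combined with the long exact sequence of the pair $(M, M_{b+1})$, identifies $H^*_T(M, M_{b+1})$ with $K_{M_{b+1}} = \ker(H^*_T(M_{b+1}) \to H^*_T(M_b))$ up to a degree shift, and under this identification $\alpha_{b+1}$ corresponds to the restriction of $K_{M_{b+1}}$ to $H^*_T(M_b)$, which vanishes tautologically. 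For the intermediate range $b+1 < j \leq r-1$, both $H^*_T(M, M_j)$ and $H^*_T(M, M_{j-1})$ are identified with analogous kernel modules $K_{M_j}$ and $K_{M_{j-1}}$, and $\alpha$ corresponds to the restriction map $K_{M_j} \to K_{M_{j-1}}$. The main obstacle is that proving this vanishes requires combining the face-ring presentation of Proposition~\ref{prop:genbythomclasses}, the Chang-Skjelbred-type bookkeeping of Subsection~\ref{sec:Chang}, and the combinatorial structure of the $(b+1)$-skeleton (Subsection~\ref{sec:b+1}) to show that any equivariant class vanishing on the vertex set $M_b$ already vanishes upon restriction to every intermediate skeleton $M_{j-1}$. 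Since Masuda and Panov's shortcut of proving $H^{\text{odd}}(M) = 0$ to conclude equivariant formality directly is not available in our fixed-point-free setting, this face-by-face analysis of Thom class restrictions is where the bulk of the technical work lies.
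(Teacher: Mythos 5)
There is a genuine gap, and it sits exactly where you locate ``the bulk of the technical work'': the vanishing of $\alpha_j\colon H^*_T(M,M_j)\to H^*_T(M,M_{j-1})$ for $b+2\le j\le r-1$. Your base case, the step $j=b$ (Theorem \ref{thm:OFA+injective}) and the step $j=b+1$ (via Corollary \ref{cor:torsionspaltet}) are fine, but in the intermediate range the identification of $\alpha_j$ with the restriction $K_{M_j}\to K_{M_{j-1}}$ only rephrases the problem: showing that every class on $M_j$ which vanishes on $M_b$ already vanishes on $M_{j-1}$ is, by \cite[Lemma 4.1]{FranzPuppe}, equivalent to the exactness of the Atiyah--Bredon sequence at those spots, i.e.\ to the theorem itself. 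Proposition \ref{prop:genbythomclasses} controls the quotient $H^*_T(M_j)/K_{M_j}$, not the kernel $K_{M_j}$, so the face-ring bookkeeping of Subsection \ref{sec:Chang} does not obviously deliver this. Worse, the induction is running in the wrong direction to supply the missing step: the mechanism used in the proof of Theorem \ref{thm:genABsequence} (Lemma \ref{lem:submodulesCM} plus the localization bound $\dim H^*_T(M,M_{j-1})\le r-j$) kills $\alpha_j$ when the \emph{target} $H^*_T(M,M_{j-1})$ is known to be Cohen--Macaulay of dimension $r-j$; your downward induction only knows this for the \emph{source} $H^*_T(M,M_j)$, and the Cohen--Macaulayness of the target is precisely what that step of the induction is trying to prove. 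So the argument is either circular or incomplete as written.

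The paper avoids the Atiyah--Bredon induction entirely. It identifies $H^*_T(M)$ with the face ring $\RR[M/T]$ (using Proposition \ref{prop:genbythomclasses} together with the injectivity from Theorem \ref{thm:OFA+injective}), blows up faces until all face intersections in the orbit space are connected, invokes Masuda--Panov's Lemma 8.2 to conclude that the face ring of the blown-up manifold is a Cohen--Macaulay ring (hence a Cohen--Macaulay $S(\mft^*)$-module), and then descends through the blow-up tower using the surjectivity statement of Lemma \ref{lem:surjectiveontoface} and the two-out-of-three properties of Lemma \ref{lem:CMseq}. Only after Cohen--Macaulayness of $H^*_T(M)$ is established this way does the exactness of the full Atiyah--Bredon sequence follow, via Theorem \ref{thm:genABsequence}. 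To salvage your approach you would need an independent, direct proof that $K_{M_j}$ restricts to zero in $K_{M_{j-1}}$; no such argument is currently in the paper, and supplying one would amount to a genuinely new proof of the theorem.
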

\begin{proof} We prove the theorem by induction on the dimension of the orbit space. As in \cite[Theorem 9.3]{MasudaPanov} we blow up faces of $M/T$ (i.e., we replace submanifolds $M^p$ by the complex projectivizations of the respective normal bundles $P(\nu M^p)$, see \cite[Section 9]{MasudaPanov} or \cite[Section 8]{MMP}) successively until we arrive at a $T$-manifold $\widehat{M}$ whose orbit space satisfies that the intersection of any two faces of $\widehat{M}/T$ is connected (i.e., empty or a face). In order to be able to apply Lemma \ref{lem:surjectiveontoface}, we need to choose the faces to be blown up such that the assumptions for the lemma are satisfied. More precisely, let 
\[
{\mathcal F}(M)=\{F\mid F \text{ a face of } M/T \text{ such that for all faces } G, F\cap G \text{ is connected}\}.
\]
It is clear that ${\mathcal F}(M)$ contains all vertices. If there are edges not contained in ${\mathcal F}(M)$, we can at first blow up along vertices until we obtain $M'$ such that all edges of $M'/T$ are in ${\mathcal F}(M')$. Continuing this process with the higher-dimensional faces, we obtain a sequence of $T$-manifolds $N_i$ with collapse maps 
\[
\widehat{M}=N_k\to N_{k-1}\to \ldots\to N_1 \to N_0=M
\]
such that $N_{i+1}$ is obtained from $N_i$ by blowing up a face $F_i=N_i^{p_i}/T\in {\mathcal F}(N_i)$. Note that because $F_i\in {\mathcal F}(N_i)$, every subface of the new facet $P(\nu N_i^{p_i})/T$ is in ${\mathcal F}(N_{i+1})$.

$H^*_T(\widehat{M})$ is a *local positively graded ring, with *maximal ideal generated by the homogeneous elements of positive degree. Lemma 8.2.~of \cite{MasudaPanov} implies that $\RR[\widehat{M}/T]=H^*_T(\widehat{M})$ is Cohen-Macaulay as *local graded ring. Considering $H^*_T(\widehat{M})$ as a module over itself, a graded version of \cite[Prop.~IV.12]{Serre} implies that it is also Cohen-Macaulay as an $S(\mft^*)$-module. Its Krull dimension is $r-b$, as the maximal dimension of an isotropy algebra of $\widehat{M}$ is $r-b$. It remains to show that if the action on the blown-up manifold is Cohen-Macaulay, then so is the original one. 

Assume we have already shown that $H^*_T(N_{i+1})$ is Cohen-Macaulay of Krull dimension $r-b$; we show it for $H^*_T(N_i)$. 
\[
\xymatrix@R=20pt@C=10pt{
0 \ar[r] & H^*_T(N_i,N_i^{p_i}) \ar[r] \ar[d] & H^*_T(N_i) \ar[r] \ar[d] & H^*_T(N_i^{p_i}) \ar[r] \ar[d] & 0 \\
0 \ar[r] & H^*_T(N_{i+1},P(\nu N_i^{p_i})) \ar[r] & H^*_T(N_{i+1}) \ar[r] & H^*_T(P(\nu N_i^{p_i})) \ar[r] & 0
}\]
By Lemma \ref{lem:surjectiveontoface} and the observations above, the lower horizontal sequence is exact. The left vertical map is an isomorphism, and hence the upper horizontal sequence is exact as well. We know that $H^*_T(N_{i+1})$ is Cohen-Macaulay, and $H^*_T(N_i^{p_i})$ and $H^*_T(P(\nu N_i^{p_i}))$ are Cohen-Macaulay by induction. Since all Krull dimensions are equal, it follows that $H^*_T(N_{i+1},P(\nu N_i^{p_i}))=H^*_T(N_i,N_i^{p_i})$ is either the zero module or Cohen-Macaulay of Krull dimension $r-b$ by the second statement of Lemma \ref{lem:CMseq}, and then the first statement of Lemma \ref{lem:CMseq} implies that $H^*_T(N_i)$ is Cohen-Macaulay of the same Krull dimension.
\end{proof}

\subsection{The equivariantly formal case}\label{sec:equivformalcase}

For $b=0$, i.e., $\dim M=2\dim T$, the converse of Theorem \ref{thm:OFAisCM} was proven by Bredon, see \cite[Corollary 3]{Bredon}. We thus have
\begin{thm}\label{thm:OFAequivariantlyformal}
An effective $T$-action on an orientable compact manifold $M$ with $\dim M=2\dim T$ is equivariantly formal if and only if its orbit space is open-face-acyclic.
\end{thm}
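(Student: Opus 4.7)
The proof will be very short, as it assembles results already established in the paper plus the cited result of Bredon.

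For the forward direction, I would proceed as follows. Assume the orbit space $M/T$ is open-face-acyclic. Since the action is effective and $M$ is orientable and compact, Corollary \ref{cor:dimMdimTb} applies, giving $\dim M = 2\dim T - b$. Combined with the hypothesis $\dim M = 2\dim T$, this forces $b=0$, so the bottom stratum $M_b$ coincides with the fixed point set $M^T$ (which is automatically nonempty, since vertices exist in the orbit space). Theorem \ref{thm:OFAisCM} then implies that the $T$-action is Cohen-Macaulay. Because the action has fixed points, the proposition following Theorem \ref{thm:genABsequence} (via the graded Auslander-Buchsbaum theorem) says that Cohen-Macaulayness and equivariant formality are equivalent, so the action is equivariantly formal.

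For the converse direction, I would simply invoke Bredon's result \cite[Corollary 3]{Bredon}: an equivariantly formal $T$-action on an orientable compact manifold $M$ with $\dim M = 2\dim T$ has open-face-acyclic orbit space. No additional argument is required on our side.

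The main obstacle has already been resolved; it is the forward direction, whose hard content is packed into Theorem \ref{thm:OFAisCM} (which in turn required the material of Sections \ref{sec:bottomstratum}-\ref{sec:ActionsWithAcyclicOrbitSpace}). Given those results, the theorem itself is a one-paragraph synthesis, and there is no new nontrivial step to carry out.
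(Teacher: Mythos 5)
Your proposal is correct and follows exactly the paper's own route: the forward direction is the $b=0$ specialization of Theorem \ref{thm:OFAisCM} (via Corollary \ref{cor:dimMdimTb} and the equivalence of Cohen-Macaulayness and equivariant formality in the presence of fixed points), and the converse is Bredon's Corollary 3. No discrepancies.
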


Still in the case $\dim M=2\dim T$, there is a relation between the number of connected components of $M_{i}\setminus M_{i-1}$ and the Betti numbers of $M$.  For an arbitrary $T$-action on a compact manifold $M$, Duflot calculated the Poincar\'{e} series of $H^*_T(M)$ in terms of the components of $M_i\setminus M_{i-1}$. Letting $\lambda_i$ denote the number of connected components of $M_i\setminus M_{i-1}$, her result simplifies in the case of an action with open-face-acyclic orbit space to the following
\begin{prop}[{\cite[Theorem 2]{Duflot}}] \label{prop:Poincareseries} For a $T$-action with open-face-acyclic orbit space, the Poincar\'{e} series of $H^*_T(M)$ is given by
$
\sum_{i=b}^r  \lambda_i \left(\frac{t^2}{1-t^2}\right)^{r-i}.
$ 
\end{prop}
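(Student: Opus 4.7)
The plan is to compute the Poincar\'e series via the Euler--Poincar\'e principle applied to the Atiyah--Bredon sequence, whose exactness is guaranteed by Theorems~\ref{thm:OFAisCM} and~\ref{thm:genABsequence}.

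I would first evaluate each relative term. Writing $H^*_T(M_i, M_{i-1}) = H^*_{T,c}(M_{(i)}) = \bigoplus_j H^*_{T,c}(\Mr^{p_j})$, with the sum taken over the $\lambda_i$ connected components of $M_{(i)}$, and applying a compactly supported analogue of Lemma~\ref{lem:trivialpartofaction} (noting $\dim \mft_{p_j}=r-i$), this becomes $\bigoplus_j S(\mft_{p_j}^*)\otimes H^*_c(\Mr^{p_j}/T)$. By Proposition~\ref{prop:OFAd_p} each $\Mr^{p_j}/T$ is an orientable orbifold of dimension $i-b$, which is acyclic by open-face-acyclicity; Poincar\'e duality then forces $H^*_c(\Mr^{p_j}/T)=\RR$ concentrated in degree $i-b$, so
$$P(H^*_T(M_i, M_{i-1}), t) = \lambda_i\cdot\frac{t^{i-b}}{(1-t^2)^{r-i}}.$$

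Next, I would track degree shifts in the Atiyah--Bredon sequence: the first map is degree-preserving, while every subsequent connecting map raises degree by one. The Euler--Poincar\'e identity applied to this exact sequence of graded modules then yields
$$P(H^*_T(M), t) = \sum_{i=b}^r \frac{(-1)^{i-b}\lambda_i}{(1-t^2)^{r-i}}.$$

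The main obstacle is to rewrite this signed formula as the claimed unsigned expression $\sum_{i=b}^r \lambda_i\left(t^2/(1-t^2)\right)^{r-i}$. The two rational functions are not identical as functions of the abstract parameters $\lambda_b,\ldots,\lambda_r$, so they coincide only in view of nontrivial linear constraints among the $\lambda_i$'s coming from the simplicial-poset structure of $M/T$. I would extract such constraints by applying the same Euler--Poincar\'e argument to the basic Atiyah--Bredon sequence of Proposition~\ref{prop:basicABsequence} and its truncated variants (their exactness being the basic-cohomology counterpart of Proposition~\ref{prop:vorneexakt}): the full basic sequence already produces the Euler-characteristic identity $\sum_i(-1)^{i-b}\lambda_i=1$ matching $\Hb^*(M)=\RR$, while truncating at each level $j$, together with the computation of $\Hb^*(M_j)$ afforded by Corollary~\ref{cor:OFArelcohom}, supplies enough further relations on the $\lambda_i$ to force equality of the two Poincar\'e series. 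A conceptually cleaner route, parallel to Section~\ref{sec:OFAisCM}, would be to invoke the ring-level isomorphism $H^*_T(M)\cong \RR[M/T]$ with the face ring of the simplicial poset $M/T$ and quote Stanley's Hilbert-series formula for such face rings, under which a codimension-$(r-i)$ face contributes precisely $\left(t^2/(1-t^2)\right)^{r-i}$ to the total.
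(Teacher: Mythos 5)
Your first route stalls at exactly the point you flag, and the repair you propose does not work. The computation $P(H^*_T(M_i,M_{i-1}),t)=\lambda_i\,t^{i-b}(1-t^2)^{-(r-i)}$ and the resulting signed expression $\sum_i(-1)^{i-b}\lambda_i(1-t^2)^{-(r-i)}$ are correct, but the basic Atiyah--Bredon sequence cannot supply the missing relations among the $\lambda_i$: since each $\Hb^*(M_i,M_{i-1})$ is concentrated in the single degree $i-b$ and the connecting maps raise degree by one, the sequence of Proposition~\ref{prop:basicABsequence} is nontrivial in only one total degree and yields exactly the one relation $\sum_i(-1)^{i-b}\lambda_i=1$; its truncations only compute $\dim\Hb^{j-b}(M_j)$ in terms of the $\lambda_i$ and impose nothing further on them. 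Matching the two degree-$(r-b)$ polynomials $\sum_i\lambda_i(u-1)^{i-b}$ and $\sum_i\lambda_i u^{r-i}(1-u)^{i-b}$ (with $u=t^2$) requires $r-b$ independent linear relations --- essentially the Dehn--Sommerville relations of the face poset. Already for $r-b=1$ one needs $\lambda_b=2\lambda_{b+1}$, which the Euler relation $\lambda_b-\lambda_{b+1}=1$ does not imply. So the step ``supplies enough further relations'' is the entire difficulty of this approach and is not supplied; one would have to apply the Euler-characteristic argument to every face $M^p$ separately (not just to $M$) or invoke Poincar\'e duality to obtain the needed identities.

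The paper sidesteps the sign problem entirely: its proof is the one-line combination of Duflot's Theorem 2 with Corollary~\ref{cor:OFAweights}. Duflot's formula comes from the Gysin sequences \eqref{eq:GysinMi} for the filtration by the open sets $M\setminus M_{i-1}$, where the push-forward $(\varphi_i)_*$ shifts degree by the full codimension of $\Mr^{p}$; by Corollary~\ref{cor:OFAweights} that codimension is $2(r-i)$, so each component of $M_{(i)}$ contributes $t^{2(r-i)}(1-t^2)^{-(r-i)}=\left(t^2/(1-t^2)\right)^{r-i}$ directly, with no signs to cancel. Your ``conceptually cleaner route'' via $\RR[M/T]\cong H^*_T(M)$ and Stanley's Hilbert-series formula is sound --- the isomorphism is established in Section~\ref{sec:OFAisCM} --- but it is precisely the Masuda--Panov argument that the paper explicitly mentions and deliberately avoids, remarking that the proposition ``follows independently from this isomorphism.'' As written, then, your primary argument has a genuine gap, and your fallback is a valid but different and heavier proof than the paper's.
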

Note that her notation is slightly different from ours: she indexes the $M_i$ by the dimensions of the isotropy groups, not the dimensions of the orbits.  Using the isomorphism between $H^*_T(M)$ and the face ring of $M/T$, Masuda and Panov \cite[Theorem 5.12, Theorem 7.7]{MasudaPanov} obtain the same equation for torus manifolds with $H^{odd}(M,\ZZ)=0$ using the fact that the Poincar\'{e} series of the face ring was determined in \cite[Proposition 3.8]{Stanley}. However, Proposition \ref{prop:Poincareseries} follows independently from this isomorphism, as we only combine our calculation of the codimensions of the $M^p$ in Corollary \ref{cor:OFAweights} with Theorem 2 of \cite{Duflot}.

On the other hand, if $T$ acts on an orientable compact manifold $M$ with open-face-acyclic orbit space and $\dim M=2\dim T=2r$, the action is equivariantly formal by Theorem \ref{thm:OFAequivariantlyformal}. Thus, we know that in this case $H^*_T(M)=H^*(M)\otimes S(\mft^*)$ as a graded $S(\mft^*)$-module, and hence the Poincar\'{e} series of $H^*_T(M)$ is given by $\frac{\sum_i b_{2i} t^{2i}}{(1-t^2)^r}$, where $b_i=\dim H^i(M)$ (note that the odd Betti numbers vanish since $H^*_T(M)$ maps injectively into $H^*_T(M^T)$, and the fixed point set consists of isolated points). Equating these two expressions for the Poincar\'{e} series, we obtain that the $\lambda_i$ determine the $b_i$ and vice versa:
\begin{prop} \label{prop:bettilambda} For a $T$-action on an orientable compact manifold $M$ with open-face-acyclic orbit space and $\dim M=2\dim T$, we have
\[
b_{2i}=b_{2(r-i)}=\sum_{j=i}^r (-1)^{j-i} {j \choose i} \lambda_j.
\]
\end{prop}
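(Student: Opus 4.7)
My plan is to equate two expressions for the Poincaré series of $H^*_T(M)$ and compare coefficients, as suggested in the paragraph preceding the statement. Since $\dim M=2r$ forces $b=0$ by Corollary \ref{cor:dimMdimTb}, Theorem \ref{thm:OFAequivariantlyformal} shows that the action is equivariantly formal, hence $H^*_T(M)\cong H^*(M)\otimes S(\mft^*)$ as a graded $S(\mft^*)$-module. Moreover, the fixed point set $M^T=M_0$ consists of finitely many isolated points (by Proposition \ref{prop:OFAd_p}, each component has orbit-space dimension $d_p=0$), so the injection $H^*_T(M)\hookrightarrow H^*_T(M^T)$ forces $H^{odd}_T(M)=0$ and hence $H^{odd}(M)=0$. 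Consequently the Poincaré series of $H^*_T(M)$ equals $\sum_i b_{2i}t^{2i}/(1-t^2)^r$, while Proposition \ref{prop:Poincareseries} provides the alternate expression $\sum_{i=0}^r \lambda_i \bigl(t^2/(1-t^2)\bigr)^{r-i}$ for the same series.

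Equating these and multiplying through by $(1-t^2)^r$, then writing $u=t^2$, reduces the problem to the polynomial identity
\[
\sum_i b_{2i}\, u^i \;=\; \sum_{j=0}^r \lambda_j\, u^{r-j}(1-u)^j.
\]
Next I would expand $(1-u)^j=\sum_m \binom{j}{m}(-u)^m$ and extract the coefficient of $u^{r-i}$ on the right-hand side. Only summands with $r-j+m=r-i$ contribute, that is, $m=j-i$ with $j$ ranging from $i$ to $r$, so this coefficient equals $\sum_{j=i}^r (-1)^{j-i}\binom{j}{j-i}\lambda_j = \sum_{j=i}^r (-1)^{j-i}\binom{j}{i}\lambda_j$ by the binomial symmetry $\binom{j}{j-i}=\binom{j}{i}$.

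Finally, the coefficient of $u^{r-i}$ on the left-hand side is $b_{2(r-i)}$, which equals $b_{2i}$ by Poincaré duality for the orientable closed manifold $M$ of dimension $2r$. This simultaneously establishes both equalities in the statement. The argument is essentially routine generating-function bookkeeping; every ingredient---equivariant formality, the Duflot-type Poincaré series formula, and Poincaré duality---has already been assembled in the earlier sections, so there is no substantive obstacle beyond the combinatorial coefficient extraction itself.
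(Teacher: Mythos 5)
Your proposal is correct and follows essentially the same route as the paper: deduce $b=0$ and equivariant formality, note $H^{odd}(M)=0$ via injectivity into $H^*_T(M^T)$ with isolated fixed points, and equate the Poincar\'e series from Proposition \ref{prop:Poincareseries} with $\sum_i b_{2i}t^{2i}/(1-t^2)^r$, finishing with Poincar\'e duality. The binomial coefficient extraction you spell out is exactly the routine step the paper leaves implicit.
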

For an arbitrary equivariantly formal action, Bredon related the Poincar\'{e} series of $M$ with the Poincar\'{e} series of the (compact cohomology of the) components of $M_i\setminus M_{i-1}$ using the Atiyah-Bredon sequence, see the equation on the bottom of p.~846 in \cite{Bredon}. For a $T$-action with open-face-acyclic orbit space, his equation simplifies to Proposition \ref{prop:bettilambda}. Note that, using Poincar\'{e} duality for $M$ and the components of $(M_i\setminus M_{i-1})/T$, one can see that in the case of an equivariantly formal action on an orientable compact manifold, Theorem 2 of \cite{Duflot} is the same as the equation by Bredon.

\begin{ex}  \label{ex:figure} Consider the following $T^3$-actions on the $6$-dimensional manifolds $S^4\times S^2$ and $\CC P^3$: on $S^4\times S^2$, we regard the action given by the product of the $T^2$-action on $S^4=\{(z,w,s)\mid |z|^2+|w|^2+s^2=1\}\subset \CC^2\times \RR$ defined by
\[
(t_1,t_2)\cdot (z,w,s)=(t_1z,t_2w,s)
\]
and the standard $S^1$-action on $S^2$. On $\CC P^3$ we have the $T^3$-action given by
\[
(t_1,t_2,t_3)\cdot [z_0:z_1:z_2:z_3]=[z_0:t_1z_1:t_2z_2:t_3z_3].
\]
Both these actions are equivariantly formal (e.g.~because they have exactly $4$ fixed points, or because $H^{odd}(M)=0$ in both cases) and have open-face-acyclic orbit space. As the cohomologies of $\CC P^3$ and $S^4\times S^2$ are isomorphic as graded vector spaces, the numbers of connected components $\lambda_i$ of $M_i\setminus M_{i-1}$ have to coincide for these actions. In fact, they are $\lambda_0=4$, $\lambda_1=6$, $\lambda_2=4$ and $\lambda_3=1$. For $\CC P^3$, the orbit space is a tetrahedron, whereas for $S^4\times S^2$ it is a cylinder, see Figure \ref{fig:orbitspaces}.
\begin{figure}
\includegraphics{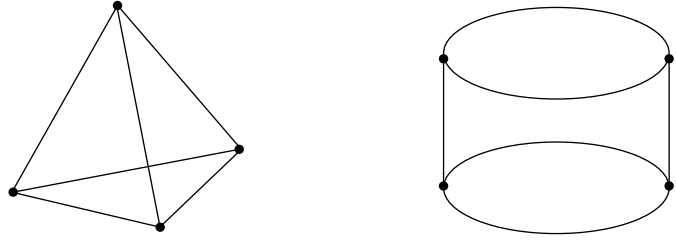}
\caption{Orbit spaces occuring in Example \ref{ex:figure}.}
\label{fig:orbitspaces}
\end{figure}
In the pictures, the dots represent the fixed points. Note that whereas the equivariant cohomologies $H^*_{T^3}(\CC P^3)$ and $H^*_{T^3}(S^4\times S^2)$ are isomorphic as graded $S(\mft^*)$-modules, their multiplicative structures do not coincide. In fact, the face rings of the tetrahedron and the cylinder are not isomorphic (e.g.~in the face ring of the cylinder the product of the Thom classes of the top and the bottom face is zero, whereas in the face ring of the tetrahedron two elements of degree two whose product is zero are linearly dependent).

Note that the tetrahedron also appears as the orbit space of e.g.~the $T^4$-action on $S^7=\{(z_i)\mid \sum |z_i|^2=1\}\subset \CC^4$ given by
\[
(t_1,t_2,t_3,t_4)\cdot (z_1,z_2,z_3,z_4)=(t_1z_1,t_2z_2,t_3z_3,t_4z_4),
\] with the vertices corresponding to the one-dimensional orbits. By Theorem \ref{thm:OFAisCM}, this action is Cohen-Macaulay, with $H^*_{T^4}(S^7)$ of Krull dimension $3$. In fact, $H^*_{T^4}(S^7)$ is isomorphic to $H^*_{T^3}(\CC P^3)$ as a ring. In view of Remark \ref{rem:reductiontoequivformal}, this is clear as the diagonal circle $S^1$ in $T^4$ acts freely  on $S^7$ such that the induced $T^3$-action on $S^7/S^1=\CC P^3$ is the action described above.
\end{ex}

\end{document}